\theoremstyle{plain}
\newtheorem{theorem}{Theorem}[section]
\newtheorem{proposition}[theorem]{Proposition}
\newtheorem{lemma}[theorem]{Lemma}
\newtheorem{corollary}[theorem]{Corollary}
\theoremstyle{remark}
\newtheorem{remark}[theorem]{Remark}
\numberwithin{equation}{section}
\newcommand{\C}{\mathbb{C}}
\newcommand{\R}{\mathbb{R}}
\newcommand{\N}{\mathcal{N}}
\newcommand{\F}{\mathcal{F}}
\renewcommand{\Im}{\operatorname{Im}}
\renewcommand{\Re}{\operatorname{Re}}
\def\({\left(}
\def\){\right)}
\newcommand{\eps}{\varepsilon}
\DeclareMathOperator{\arctanh}{arctanh}
\newcommand{\qtq}[1]{\quad\text{#1}\quad}
\begin{document}

\title[NLS with a delta potential]{Stability of small solitary waves for the 1$d$ NLS with an attractive delta potential}

\author[S. Masaki]{Satoshi Masaki}
\address{Department of Systems Innovation \\
Graduate School of Engineering Science \\
Toyonaka, Osaka, Japan}
\email{masaki@sigmath.es.osaka-u.ac.jp}

\author[J. Murphy]{Jason Murphy}
\address{Department of Mathematics and Statistics, \\ Missouri University of Science and Technology \\ Rolla, MO, USA}
\email{jason.murphy@mst.edu}

\author[J. Segata]{Jun-ichi Segata}
\address{Mathematical Institute, Tohoku University\\
6-3, Aoba, Aramaki, Aoba-ku, Sendai 980-8578, Japan}
\email{segata@m.tohoku.ac.jp}

\begin{abstract} We consider the initial-value problem for the one-dimensional nonlinear Schr\"odinger equation in the presence of an attractive delta potential.  We show that for sufficiently small initial data, the corresponding global solution decomposes into a small solitary wave plus a radiation term that decays and scatters as $t\to\infty$.  In particular, we establish the asymptotic stability of the family of small solitary waves. 

\end{abstract}

\maketitle

\section{Introduction}

We study the one-dimensional nonlinear Schr\"odinger equation (NLS) with an attractive delta potential.  This equation takes the form
\begin{equation}\label{nls}
\begin{cases}
i\partial_t u = Hu + \mu |u|^p u, \\
u(0)=u_0.
\end{cases}
\end{equation}
Here we take $u:\R_t\times\R_x\to\C$, $\mu\in\R\backslash\{0\}$, and $H$ is the Schr\"odinger operator
\[
H=-\tfrac12\partial_x^2 + q\delta(x),
\]
where $q<0$ (the attractive case) and $\delta$ is the Dirac delta distribution.  Equation \eqref{nls} provides a simple model describing the resonant nonlinear propagation of light through optical wave guides with localized defects \cite{GHW}. For reasons to be detailed below, we consider the $L^2$-supercritical case, namely, $p\geq 4$.  For technical simplicity we also assume $p$ is an even integer. 

In the repulsive case ($q>0$), equation \eqref{nls} is studied from the point of view of scattering. The authors of \cite{BV} proved global well-posedness and scattering in the energy space for the defocusing mass-supercritical case. The work \cite{II} considered the focusing mass-supercritical regime and proved scattering below the ground state threshold. In our previous work \cite{MMS}, we considered \eqref{nls} with a cubic nonlinearity and proved decay and (modified) scattering for small initial data in a weighted space (see also \cite{S}).    

Such results are not expected in the attractive case.  Indeed, in the attractive case the operator $H$ has a single eigenvalue $-\tfrac12 q^2$, with a one-dimensional eigenspace spanned by the $L^2$-normalized eigenfunction
\[
\phi_0(x):=|q|^{\frac12} e^{q|x|}.
\]
One can then prove that there exists a family of small nonlinear bound states $Q$, parametrized by small $z\in\C$, which satisfy
\begin{equation}\label{elliptic}
HQ + \mu|Q|^p Q = EQ,
\end{equation}
with $Q=Q[z]=z\phi_0 + \mathcal{O}(z^2)$ and $E=E[|z|]=-\tfrac12 q^2+\mathcal{O}(z)$. The functions $u(t)=e^{-iEt}Q$ are then small solitary wave solutions to \eqref{nls}.  In particular, one does not expect small solutions simply to decay and scatter in general.  Instead, we will show that for small initial data, the corresponding solution decouples into a small solitary wave plus radiation.   The existence and properties of $Q[z]$ are discussed in Section~\ref{S:existence}.  In fact, in the special case of the delta potential, one can find explicit formulas for the nonlinear ground states.

Our main result is the following theorem.  We write $P_c$ for the projection onto the continuous spectral subspace of $H$. The notation $D_j$ denotes derivative with respect to $z_j$, where we identify $z\in\C$ with the real vector $(z_1,z_2)$.  Finally, $\langle\cdot,\cdot\rangle$ denotes the standard $L^2$ inner product. 

\begin{theorem}\label{T} Let $\|u_0\|_{H^1}=\delta$, $q<0$, and let $p\geq 4$ be an even integer.  For $\delta$ sufficiently small,  there exists a unique global solution $u$ to \eqref{nls} and $z(t)\in\C$ such that writing 
\begin{equation}\label{thm-decomp}
u(t)=Q[z(t)]+v(t),
\end{equation}
where $Q[z(t)]$ is the solution to \eqref{elliptic}, we have the following:
\begin{itemize}
\item $v$ satisfies the orthogonality conditions
\begin{equation}\label{thm-orthogonal}
\Im\langle v(t), D_jQ[z(t)]\rangle \equiv 0 \qtq{for}j\in\{1,2\}.
\end{equation}
\item $v$ obeys the following global space-time bounds, 
\[
\|v\|_{L_t^\infty H_x^1\cap L_t^4 L_x^\infty} + \| \langle x\rangle^{-\frac32}v\|_{L_x^\infty L_t^2} + \|\partial_x v\|_{L_x^\infty L_t^2} \lesssim \delta,
\]
and there exists unique $v_+\in P_c H^1$ such that
\[
\lim_{t\to\infty} \|v(t)-e^{-itH}v_+\|_{H^1}=0.
\]
\item $\|z\|_{L_t^\infty} \lesssim \delta$  and there exists $z_+\in\C$ satisfying $\bigl||z_+|-|z(0)|\bigr|\lesssim \delta^2$ and
\begin{equation}\label{zplus}
\lim_{t\to\infty} z(t)\exp\biggl\{i\int_0^t E[z(s)]\,ds\biggr\} = z_+.
\end{equation}
\end{itemize}
\end{theorem}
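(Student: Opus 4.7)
The plan is a modulation decomposition followed by a bootstrap on the mixed space-time norms in the statement. First, using the implicit function theorem together with the $\R$-linear independence of $D_1Q[0]=\phi_0$ and $D_2Q[0]=i\phi_0$, for $\|u(t)\|_{H^1}$ sufficiently small I would uniquely define $z(t)\in\C$ so that $v(t):=u(t)-Q[z(t)]$ satisfies the orthogonality conditions \eqref{thm-orthogonal}, with $|z(t)|+\|v(t)\|_{H^1}\lesssim\|u(t)\|_{H^1}$. Differentiating \eqref{thm-orthogonal} in $t$ and substituting \eqref{nls} then yields a modulation ODE of the schematic form
\[
\dot z = -iE[z]\,z + \mathcal{R}(z,v),
\]
in which the remainder $\mathcal{R}$ collects $L^2$ pairings of $v$ and of the nonlinear expression $\mathcal{N}(v,z):=\mu(|Q+v|^{p}(Q+v)-|Q|^{p}Q)$ against spatially localized functions built from $Q[z]$ and its $z$-derivatives.

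The radiation $v$ itself satisfies
\[
i\partial_t v = Hv + \mathcal{N}(v,z) - E[z]\,Q[z] + i\dot z_k D_kQ[z].
\]
Iterating the Duhamel formula against $e^{-itH}$, and using that \eqref{thm-orthogonal} confines $v$ to the continuous spectral subspace $P_cH^1$ up to an $O(|z|\,\|v\|)$ correction, I would apply the linear toolkit for $e^{-itH}P_c$ on $\R$: Strichartz estimates, the Kato smoothing bound $\|\partial_x e^{-itH}P_c f\|_{L_x^\infty L_t^2}\lesssim\|f\|_{L^2}$, and the weighted local decay $\|\langle x\rangle^{-3/2}e^{-itH}P_c f\|_{L_x^\infty L_t^2}\lesssim\|f\|_{L^2}$, together with their inhomogeneous (retarded) counterparts. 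These enable a bootstrap of the form $X(T)\lesssim\delta + X(T)^{2}+X(T)^{p+1}$ where
\[
X(T):=\|v\|_{L_t^\infty H_x^1\cap L_t^4 L_x^\infty}+\|\langle x\rangle^{-3/2}v\|_{L_x^\infty L_t^2}+\|\partial_x v\|_{L_x^\infty L_t^2}+\|z\|_{L_t^\infty}
\]
is taken on $[0,T]$. The terms in $\mathcal{N}$ linear in $v$ always carry a factor $|Q[z]|^{p}\lesssim\delta^{p}$ and are therefore perturbative; the pure-power contribution $|v|^{p}v$ is handled by Strichartz and 1d Sobolev embedding, which is comfortable for $p\geq 4$; and the forcings $E[z]Q$ and $\dot z_kD_kQ$ are cancelled at leading order by the effective $P_c$-projection enforced by \eqref{thm-orthogonal} together with the leading-order modulation equation.

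Once the bootstrap closes, scattering of $v$ in $H^1$ follows from a Cauchy argument on $e^{itH}v(t)$, since the Duhamel integrand lies in $L^1_tH^1_x$ by the closed bounds. For the phase-corrected modulation parameter, setting $\Theta(t):=\int_0^tE[z(s)]\,ds$ and computing
\[
\tfrac{d}{dt}\bigl[z(t)e^{i\Theta(t)}\bigr]=\bigl(\dot z+iE[z]z\bigr)e^{i\Theta(t)}=\mathcal{R}(z,v)\,e^{i\Theta(t)},
\]
one obtains the limit $z_+$ once $\|\mathcal{R}\|_{L_t^1}\lesssim\delta^{2}$, which is exactly what the weighted smoothing norm delivers. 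The estimate $\abs{|z_+|-|z(0)|}\lesssim\delta^{2}$ then comes from $\tfrac{d}{dt}|z|^{2}=2\Re(\bar z\,\mathcal{R})$, the $-iE[z]z$ term being purely rotational. The main obstacle is the algebraic organisation of $\mathcal{R}$: every contribution linear in $v$ must be shown to appear paired with a spatially localized coefficient, so that the weighted smoothing norm converts it into a time-integrable quantity; this is precisely the role of the orthogonality conditions \eqref{thm-orthogonal}. That no Fermi Golden Rule condition intervenes here is a consequence of $2E[z]\to -q^{2}<0$ lying below the continuous spectrum of $H$, which rules out two-photon resonances between the bound state and the dispersive modes.
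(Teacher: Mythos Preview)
Your overall architecture matches the paper's closely: modulation via the implicit function theorem, the same orthogonality conditions, a bootstrap on the same composite norm, and the Cauchy-in-time argument for scattering. However, there is a genuine gap in your treatment of the modulation remainder $\mathcal{R}$.

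You write that ``every contribution linear in $v$ must be shown to appear paired with a spatially localized coefficient, so that the weighted smoothing norm converts it into a time-integrable quantity.'' This is not how the mechanism works. A term of the form $\langle\text{localized},v\rangle$ lands in $L_t^2$ via the weighted smoothing bound $\|\langle x\rangle^{-3/2}v\|_{L_x^\infty L_t^2}$, \emph{not} in $L_t^1$; since you need $\mathcal{R}\in L_t^1$ to obtain the limit $z_+$, a single surviving linear-in-$v$ term would be fatal. The actual role of the orthogonality conditions \eqref{thm-orthogonal} is stronger: when you differentiate them and use the gauge identity $Q[z]=-iDQ[z](iz)$ together with the differentiated elliptic equation, the terms in $\mathcal{R}$ that are linear in $v$ cancel \emph{identically}, leaving $\mathcal{R}$ of the schematic form $\langle G(v,Q),DQ\rangle$ with $G=\mathcal{O}(v^2Q^{p-1}+v^{p+1})$. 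It is the product of \emph{two} factors of $v$---each placed in $L_t^4 L_x^\infty$ or the weighted $L_x^\infty L_t^2$ norm---that yields the required $L_t^1$ control. Without verifying this exact cancellation the scheme does not close, and this is the step you have not carried out.

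Two smaller points. First, the Kato smoothing estimate you invoke, $\|\partial_x e^{-itH}P_c f\|_{L_x^\infty L_t^2}\lesssim\|f\|_{L^2}$, is too strong; in one dimension one gains only half a derivative, so the correct bound carries $\|f\|_{H^{1/2}}$ on the right (this is still sufficient since the data is in $H^1$). Second, in the $v$-equation the forcings $EQ$ and $DQ\,\dot z$ are not ``cancelled by the effective $P_c$-projection''; rather, the gauge identity combines them into the single term $-iDQ(\dot z+iEz)$, which survives after applying $P_c$ and must be estimated directly using the $L_t^1\cap L_t^2$ control on $\dot z+iEz$ obtained from the (quadratic-in-$v$) modulation ODE.
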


Theorem~\ref{T} shows that any small solution decomposes into a nonlinear bound state plus a radiation term.  In particular, we have the asymptotic stability of the family of small solitary waves.  The condition \eqref{thm-orthogonal} makes $v(t)$ orthogonal to the non-decaying solutions of the linearization of \eqref{nls} about the solitary wave at $z(t)$; this is an essential ingredient for establishing decay and scattering for $v$ (see Section~\ref{S:setup} for further discussion).   

Theorem~\ref{T} fits in the context of the stability of small solitary waves for nonlinear Schr\"odinger equations with potential, for which there are many results available. An even more extensive literature exists concerning other notions of stability, stability of large solitary waves, and so on.  We refer the interested reader to \cite{GNT, Mizumachi, Kirr1, Kirr2, SW, SW2, SW3, Weder, TY1, TY2, TY3, TY4, SC1, SC2, SC3} for a sample of the many relevant results that are available.  See in particular \cite{DH, DP, HMZ1, HMZ2, HZ, HZ2} for related results in the setting of NLS with a delta potential. We will keep our focus on the discussion of small solitary waves. 

Our result is closely related to those appearing in \cite{GNT, Mizumachi}, both of which prove asymptotic stability of small solitary waves for NLS with a potential that supports a single negative eigenvalue, with data in $H^1$ and mass-supercritical nonlinearities.  In \cite{GNT}, the authors relied crucially on the endpoint Strichartz estimate in three dimensions.  In \cite{Mizumachi}, T. Mizumachi addressed the one-dimensional case, in which case the usual endpoint Strichartz estimate is unavailable.  His approach was to establish suitable linear estimates in `reversed' Strichartz spaces, in which case the $L_t^2$ endpoint comes back into play. 

Theorem~\ref{T} is an analogue of the main result appearing in \cite{Mizumachi}, which treats a class of potentials that does not include the attractive delta potential.  The key to extending this type of result to the delta potential is to observe that by relying on exact identities related to the Schr\"odinger operator with a delta potential, one can recover the full range of linear estimates that played such an essential role in \cite{Mizumachi}.  We carry this out in Section~\ref{S:strichartz}.  Once the requisite linear estimates are in place, one could then follow many of the remaining arguments in \cite{Mizumachi} rather directly, although this is not the route that we take.  Instead, we set up the problem and prove the main result in a way that that is inspired by the presentation in \cite{GNT}, which we found to be rather conceptually clear. 

Our result is also closely tied to the work of Fukuizumi, Ohta, and Ozawa \cite{FOO}, who studied the focusing $1d$ NLS with an attractive delta potential (see also \cite{GHW}).  These authors considered the problem of stability and instability of nonlinear bound states, relying in particular on explicit formulas that they derived for the nonlinear bound states (see Section~\ref{S:existence} below).  They proved that in the mass-subcritical and mass-critical case, nonlinear bound states are orbitally stable.  In the mass-supercritical case, they show that there exists $E_1<-\tfrac12 q^2$ such that ground states corresponding to $E\in(E_1,-\tfrac12 q^2)$ are orbitally stable, while those corresponding to $E\in(-\infty,E_1)$ are unstable.  Thus our main result, Theorem~\ref{T}, extends the result of \cite{FOO} in the mass-supercritical case to {asymptotic} stability for $E$ in a neighborhood of $-\tfrac12 q^2$.  Furthermore, we are also able to treat the case of a defocusing nonlinearity; we provide explicit formulas for the nonlinear bound states in this case, as well (see Section~\ref{S:existence}).

Finally, we would also like to mention the result of \cite{DP}, which establishes the asymptotic stability of solitons for the focusing cubic NLS with a delta potential and even initial data by making use of complete integrability and the method of nonlinear steepest descent.  This result in particular extended the results appearing \cite{DH, HMZ1, HMZ2, HZ}. 

As mentioned above, our previous work on the $1d$ NLS with a repulsive delta potential \cite{MMS} considered the case of a cubic nonlinearity.  It is an interesting question whether one also has asymptotic stability in the setting of an attractive potential and $L^2$-subcritical nonlinearities (recall that {orbital} stability was proven by \cite{FOO, GHW}). Proving asymptotic stability would most likely require the introduction of stronger integrability conditions on the initial data; for example, this is the case in \cite{Kirr1, Kirr2}, which proved stability of small solitary waves for NLS with potential for some mass-subcritical nonlinearities in dimensions $d\in\{2,3\}$.  In our case, we start only with $H^1$ data and are therefore restricted to $p\geq 4$; this is completely analogous to the situation of trying to prove small-data scattering for the standard power-type NLS.  To see specific the technical points that lead to this restriction, see the estimates of the $|v|^p v$ term in the proofs of Lemma~\ref{L:bs-str}, Lemma~\ref{L:bs-H1}, and Lemma~\ref{L:bs-reverse} (as well as the $\mathcal{O}(v^pQ)$ term in Lemma~\ref{L:bs-H1}). 

Briefly, the proof of Theorem~\ref{T} goes as follows.  One shows that as long as the $u$ remains small in $H^1$, there exists a unique decomposition \eqref{thm-decomp} such that \eqref{thm-orthogonal} holds.  Using \eqref{nls} and differentiating \eqref{thm-orthogonal} leads to a coupled system of equations for $v(t)$ and $z(t)$.  Relying largely on estimates for the linear propagator $e^{-itH}$ and estimates on the bound states $Q[z]$ for small $z$, one can use these equations to close a bootstrap argument, proving that the smallness of $u$ in $H^1$ (as well as the smallness of $v$ and $z$ in various norms) persists.  Thus, one can extend the decomposition for all times; furthermore, the bounds proved on $v$ and $z$ suffice to establish the asymptotics claimed in Theorem~\ref{T}.  The particular choice of the orthogonality condition \eqref{thm-orthogonal} guarantees that the ODE involving $z[t]$ is at least quadratic in $v$, which is essential for proving the necessary bootstrap estimates; see Remark~\ref{R} for further discussion of this point. 

\subsection*{Outline of the paper} In Section~\ref{S:prelim} we introduce notation and gather some preliminary results.  We introduce the linear operator $H$ in Section~\ref{S:linear}.  In Section~\ref{S:strichartz}, we prove a range of Strichartz and local smoothing estimates for $e^{-itH}P_c$.  These match the form of the estimates of Mizumachi \cite{Mizumachi}, who considered a class of potentials that did not include the delta potential.  We are able to give rather direct proofs using the explicit formula for the resolvent.  We also prove a technical result related to the comparison of the $\dot H^1$ inner product to the bilinear form given by $HP_c$.  In Section~\ref{S:existence} we discuss the existence and properties of small nonlinear bound states, and in Section~\ref{S:LWP} we record a local well-posedness result for \eqref{nls}.  In Section~\ref{S:setup} we set up the problem, describing in detail how to find the decomposition \eqref{thm-decomp} satisfying \eqref{thm-orthogonal}.  Finally, in Section~\ref{S:proof} we carry out the main bootstrap argument and complete the proof of Theorem~\ref{T}.

\subsection*{Acknowledgements} S. Masaki was supported by JSPS KAKENHI Grant Numbers 17K14219, 17H02854, and 17H02851.  J. Segata is partially supported by JSPS KAKENHI Grant Number 17H02851.

\section{Preliminaries}\label{S:prelim}

We begin by recording some notation.  We write
\[
\langle f,g\rangle = \int \bar f g\,dx
\]
for the usual $L^2$ inner product.  Throughout the paper we will write $F(u)=\mu|u|^p u$ for the nonlinearity.  We write $\F f$ or $\hat f$ for the Fourier transform.  We write $A\lesssim B$ to denote $A\leq CB$ for some $A,B,C>0$.  

Constants below may depend on the parameter $q$ (the strength of the potential), but we will not make explicit reference to this dependence.  We would like to point out that some of the implicit constants in the estimates for $e^{-itH}P_c$ below would blow up as $|q|\to 0$ (for example, when the proof relies on the fact that $|q-i\mu|\gtrsim |q|$ for $\mu\in\R$).   In particular, the small parameter $\delta$ appearing in the statement of the main result (Theorem~\ref{T}) depends on $q$ and would degenerate to zero as $|q|\to 0$. 

\subsection{Linear theory}\label{S:linear}
The linear Schr\"odinger equation with a delta potential is a classical model in quantum mechanics that is covered extensively in the work \cite{AGH}.  We consider in this paper the case of an attractive delta potential of the form
\[
H=-\tfrac12\partial_x^2 + q\delta(x),\quad q<0.
\]

More precisely, the operator $H$ is defined by $-\tfrac12\partial_x^2$ on its domain
\[
D(H)=\{u\in H^1(\R)\cap H^2(\R\backslash\{0\}):\partial_x u(0+)-\partial_x u(0-)=2qu(0)\}
\]
and extends to a self-adjoint operator on $L^2$ with purely absolutely continuous essential spectrum equal to $[0,\infty)$.  If $q>0$ (the repulsive case) then $H$ has no eigenvalues.  In $q<0$ (the attractive case) then $H$ has a single negative eigenvalue $-\tfrac12 q^2$ with a one-dimensional eigenspace spanned by the $L^2$-normalized eigenfunction
\[
\phi_0(x):=|q|^{\frac12} e^{q|x|}.
\]
In this paper we restrict attention to the attractive case. 

\subsection{Local smoothing and Strichartz estimates}\label{S:strichartz}  In this section we prove several local smoothing and Strichartz estimates for $e^{-itH}$.  We write $P_c$ to denote the projection onto the absolutely continuous spectrum.

The starting point for the estimates we will prove is the following spectral resolution of the free propagator:
\begin{equation}\label{stone}
\begin{aligned}
e^{-itH}P_c & =\int_0^\infty e^{-it\lambda}E(\lambda)\,d\lambda,\\
E(\lambda)&:= \tfrac{1}{2\pi i}[R(\lambda+i0)-R(\lambda-i0)].
\end{aligned}
\end{equation}
Here $R(z)=(H-z)^{-1}$ is the resolvent, and $R(\lambda\pm i0)$ denotes the analytic continuation onto the real line from the upper/lower half plane. For the case of the delta potential, we have explicit formulas for the integral kernel of the resolvent, namely
\begin{align*}
R(\lambda+i0;x,y)& =\tfrac{i}{2\sqrt{\lambda}}\bigl[e^{i|x-y|\sqrt{\lambda}}-\tfrac{q}{q-i\sqrt{\lambda}}e^{i(|x|+|y|)\sqrt{\lambda}}\bigr], \\
R(\lambda-i0;x,y) & = \tfrac{i}{2\sqrt{\lambda}}\bigl[e^{-i|x-y|\sqrt{\lambda}}-\tfrac{q}{q+i\sqrt{\lambda}}e^{-i(|x|+|y|)\sqrt{\lambda}}\bigr]
\end{align*}
for $\lambda>0$.  We similarly write $E(\lambda;x,y)$ for the kernel of $E(\lambda)$.  These identities can be found, for example, in \cite[Chapter~I.3]{AGH}, but they are also readily derived by hand.  In particular, one can recognize the first term as the free resolvent, while the second term (representing the contribution of the potential) simply fixes the boundary condition. 

Typically we will focus on estimating $R(\lambda+i0)$, as the other term is similar.  We write the kernel in two pieces, namely
\[
R(\lambda+i0;x,y)=R_1(\lambda;x,y)+R_2(\lambda;x,y),
\] 
where
\begin{align}
R_1(\lambda;x,y) & = \tfrac{i}{2\sqrt{\lambda}}[e^{i|x-y|\sqrt{\lambda}}-e^{i(|x|+|y|)\sqrt{\lambda}}], \label{Res1} \\
R_2(\lambda;x,y) & =\tfrac{1}{2(q-i\sqrt{\lambda})}e^{i(|x|+|y|)\sqrt{\lambda}}.\label{Res2} 
\end{align}

We note that
\begin{equation}\label{R1}
R_1(\lambda;x,y) = \tfrac{i}{2\sqrt{\lambda}} 
\begin{cases} [e^{-ix\sqrt{\lambda}}-e^{ix\sqrt{\lambda}}]e^{iy\sqrt{\lambda}} & y\geq x \geq 0, \\
0 & y\geq 0\geq x,\\
e^{-ix\sqrt{\lambda}}[e^{iy\sqrt{\lambda}}-e^{-iy\sqrt{\lambda}}] & 0 \geq y\geq x,
\end{cases}
\end{equation}
There are analogous formulas in the cases $x\geq y\geq 0$, $x\geq 0\geq y$, and $0\geq x\geq y$. We will focus on treating the three cases appearing in \eqref{R1}.

To simplify the presentation below, we will use $\tilde\F f$ to denote quantities that are similar (but not identical) to the Fourier transform of $f$; in particular, we use notation this for quantities that obey the bounds
\begin{equation}\label{FT-bds}
\| \tilde \F f\|_{L^2} \lesssim \|f\|_{L^2},\quad \|\tilde \F f\|_{L^\infty} \lesssim \|f\|_{L^1},\qtq{and}\| |\mu|^{\frac12}\tilde\F f\|_{L^2} \lesssim \|f\|_{H^{\frac12}}. 
\end{equation}
As a typical example, we could apply this notation to a term like
\[
\int_x^\infty e^{i \xi y}f(y)\,dy=(2\pi)^{\frac12}\F[\chi_{(x,\infty)} f](-\xi).
\]
Indeed, the first two bounds in \eqref{FT-bds} can be easily checked (and are uniform in $x$).  The third bound follows from the $L^2$ boundedness of $|\nabla|^{\frac12}\chi_{(x,\infty)}\langle \nabla\rangle^{-\frac12}$, which in turn follows from interpolation:  this is trivial without any derivatives, while the Sobolev embedding $H^1(\R)\subset C_0(\R)$ yields
\begin{align*}
\|\partial_x [\chi_{(x,\infty)}f]\|_{L^2} \lesssim \|\partial_x f\|_{L^2}+|f(x)| \lesssim \|f\|_{H^1}
\end{align*}
uniformly in $x$. 

We begin with the standard $1d$ Strichartz estimates. 

\begin{proposition}[Strichartz estimates]\label{P:Str1}  The following estimates hold on any space-time slab $I\times\R$ with $0\in I$:
\begin{align*}
\| e^{-itH}P_c f
\|_{(L_t^4 L_x^\infty \cap L_t^\infty L_x^2)(I\times\R)} & \lesssim \|f\|_{L^2}, \\
\biggl\| \int_0^t e^{-i(t-s)H}P_c F(s)\,ds
\biggr\|_{(L_t^4 L_x^\infty \cap L_t^\infty L_x^2)(I\times\R)} 
& \lesssim \|F\|_{L_t^\alpha L_x^\beta(I\times\R)}
\end{align*}
for any $(\alpha,\beta)\in[1,\tfrac43]\times[1,2]$ satisfying $\tfrac{2}{\alpha}+\tfrac{1}{\beta}=\tfrac52$. 
\end{proposition}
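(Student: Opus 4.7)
My plan is to reduce the proposition to the pointwise dispersive estimate
\[
\|e^{-itH}P_c f\|_{L^\infty_x}\lesssim |t|^{-1/2}\|f\|_{L^1_x}
\]
and then appeal to the standard (non-endpoint) one-dimensional Strichartz machinery. The homogeneous $L^\infty_t L^2_x$ bound is immediate from the unitarity of $e^{-itH}P_c$ on $L^2$; the $L^4_t L^\infty_x$ bound follows from the dispersive estimate via $TT^*$ and the one-dimensional Hardy--Littlewood--Sobolev inequality; and the inhomogeneous estimate over the stated range of $(\alpha,\beta)$ is obtained by interpolating between the dual endpoints $(1,2)$ and $(4/3,1)$ and applying the Christ--Kiselev lemma to accommodate the retarded Duhamel kernel.

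The real work is the dispersive estimate, which I would prove by inserting the resolvent decomposition $R=R_1+R_2$ from \eqref{Res1}--\eqref{Res2} into Stone's formula \eqref{stone}, so that $e^{-itH}P_c=T_1(t)+T_2(t)$. For $T_1$, the case analysis \eqref{R1} shows that on each pair of half-lines of the same sign, $R_1$ is exactly the difference of the free resolvent kernel at $|x-y|$ and at $|x|+|y|$, i.e.\ the Dirichlet half-line resolvent obtained by the method of images, and it vanishes when $x$ and $y$ have opposite signs. Consequently $T_1(t)$ is nothing but the half-line Dirichlet Schr\"odinger propagator, which inherits the $|t|^{-1/2}$ bound from the free propagator by the standard reflection/extension argument.

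For $T_2$, I would use the explicit form of $R_2$, compute the spectral jump $\tfrac{1}{2\pi i}[R_2(\lambda+i0)-R_2(\lambda-i0)]$, and substitute $\xi=\sqrt{\lambda}$ to rewrite $T_2(t)f(x)$ as a one-dimensional oscillatory integral over $\R_\xi$ with quadratic phase $-it\xi^2+i(|x|+|y|)\xi$ and amplitude (up to constants) $\xi/(q-i\xi)$. The crucial structural feature of the attractive delta potential is that $|\xi/(q-i\xi)|\leq 1$ uniformly on $\R$ and vanishes at $\xi=0$, so the spectral-edge singularity is harmless; a stationary phase / Van der Corput argument in $\xi$ then yields the $|t|^{-1/2}$ decay with the $L^1_y\to L^\infty_x$ norm.

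The main obstacle is precisely this $T_2$ analysis: obtaining uniform control of the amplitude on all of $\R_\xi$ (both near $\xi=0$, where the free resolvent is singular, and at infinity), while keeping the oscillatory integral estimate sharp enough to match the free dispersive rate. The boundedness and vanishing of $\xi/(q-i\xi)$ at the origin---an exact algebraic identity available only for the delta potential---is what makes this possible without any loss; with the dispersive bound secured, the stated Strichartz estimates follow by the routine arguments outlined in the first paragraph.
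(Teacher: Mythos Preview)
Your approach is correct and follows the same overall strategy as the paper: reduce Proposition~\ref{P:Str1} to the dispersive estimate $\|e^{-itH}P_c f\|_{L^\infty}\lesssim |t|^{-1/2}\|f\|_{L^1}$ (together with the trivial $L^2\to L^2$ bound), then invoke the standard $TT^*$/Hardy--Littlewood--Sobolev/Christ--Kiselev machinery. The difference lies in how the dispersive bound is proved. The paper splits the resolvent as \emph{free part} plus \emph{potential correction} $-\tfrac{q}{q-i\sqrt\lambda}e^{i(|x|+|y|)\sqrt\lambda}$; the free part is classical, and for the correction the paper applies Plancherel in $\lambda$, using that the Fourier transform of the quadratic phase $e^{-it\lambda^2/2}$ is $O(|t|^{-1/2})$ in $L^\infty$ and that $\mathcal F[(q-i\lambda)^{-1}]\in L^1$ (by the elementary bound $\|\hat g\|_{L^1}\lesssim\|(1-\partial_\lambda^2)g\|_{L^2}$). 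You instead use the $R_1+R_2$ split of \eqref{Res1}--\eqref{Res2}, identifying $T_1$ with the Dirichlet half-line propagator via the method of images and handling $T_2$ by a van der Corput/stationary-phase argument on the amplitude $\xi/(q-i\xi)$. Both routes work; the paper's Plancherel argument is somewhat more elementary in that it avoids any oscillatory-integral lemma, while your Dirichlet identification for $T_1$ is a clean structural observation. Note that for your van der Corput step on $T_2$ you should record that the amplitude has bounded total variation on $\R$ (its derivative is $q/(q-i\xi)^2\in L^1_\xi$), which is the hypothesis that makes the $|t|^{-1/2}$ bound uniform in $x,y$.
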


As is well-known, the proof boils down to the following dispersive estimates.
\begin{lemma}[Dispersive estimates]\label{L:dispersive}  The following estimates hold: 
\[
\|e^{-itH}P_c f\|_{L^2}\lesssim \|f\|_{L^2}
\qtq{and}\|e^{-itH}P_c f\|_{L^\infty} \lesssim |t|^{-\frac12}\|f\|_{L^1}. 
\]
\end{lemma}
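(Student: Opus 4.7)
The $L^2$ bound is immediate from the unitarity of $e^{-itH}$ on $L^2$ together with the boundedness of the spectral projection $P_c$. The content of the lemma is therefore the dispersive estimate, and the plan is to prove the pointwise kernel bound $|K(t;x,y)|\lesssim |t|^{-\frac12}$, uniformly in $x,y\in\R$, where $K$ denotes the integral kernel of $e^{-itH}P_c$. The starting point is the spectral resolution \eqref{stone} together with the explicit decomposition $R(\lambda+i0)=R_1+R_2$ from \eqref{Res1}--\eqref{Res2} (and the analogous decomposition of $R(\lambda-i0)$, whose kernel is the complex conjugate).

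Making the substitution $\sqrt{\lambda}=\xi$, with Jacobian $d\lambda=2\xi\,d\xi$, and combining the contributions of $R(\lambda\pm i0)$ to the spectral density $E(\lambda)$, the kernel takes the form
\[
K(t;x,y)=\int_{\R}e^{-it\xi^2}m(\xi;x,y)\,d\xi,
\]
where the factor of $\xi$ from the Jacobian cancels the $\sqrt{\lambda}^{-1}$ singularity present in $R_1$, leaving a bounded amplitude, while for $R_2$ one is left with a factor of the form $\xi/(q\mp i\xi)$ which, since $q<0$, is uniformly bounded on $\R$.

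For the contribution arising from $R_1$, the amplitude reduces to a finite sum of pure phases $e^{ia\xi}$ in which $a$ is a piecewise-linear function of $x,y$, so each resulting integral is of the Gaussian form $\int_{\R}e^{-it\xi^2+ia\xi}\,d\xi = C\,t^{-\frac12}e^{-ia^2/(4t)}$, giving the claimed bound. For the $R_2$ contribution, I would use the subordination identity
\[
\tfrac{1}{q-i\xi}=-\int_0^\infty e^{(q-i\xi)s}\,ds,
\]
valid precisely because $q<0$, to rewrite $R_2$ as a superposition, weighted by the integrable density $e^{qs}$, of phases $e^{i(|x|+|y|-s)\xi}$. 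After exchanging the $\xi$- and $s$-integrals, each inner integral is again a Gaussian of the type handled for $R_1$, producing an overall bound of $C\,|t|^{-\frac12}\int_0^\infty e^{qs}\,ds=C|q|^{-1}|t|^{-\frac12}$.

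The main technical care lies in justifying the reduction to a full-line integral in $\xi$ and the interchange of integrations above; these are carried out either in a principal-value sense or via a smooth spectral cutoff and a standard limiting argument, using the absolute integrability of $e^{qs}$ and the boundedness of the phase factors. Once the $\sqrt{\lambda}^{-1}$ singularity has been absorbed by the Jacobian, no additional care is required at the spectral endpoint $\lambda=0$, and no contribution from the discrete spectrum arises by virtue of $P_c$.
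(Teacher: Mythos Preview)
Your argument is correct in outline and close in spirit to the paper's, but there is one loose end in the treatment of $R_2$. With the decomposition $R_1+R_2$ from \eqref{Res1}--\eqref{Res2}, the piece $R_2$ carries no $1/\sqrt{\lambda}$ prefactor, so after the substitution $\xi=\sqrt{\lambda}$ the Jacobian leaves you with amplitude $\xi/(q-i\xi)$ rather than $1/(q-i\xi)$---as you yourself note. Applying the subordination identity only to $1/(q-i\xi)$ does not dispose of the extra $\xi$: after exchanging integrals the inner $\xi$-integral becomes $\int_\R \xi\,e^{-it\xi^2+ib\xi}\,d\xi$, whose modulus is $\sim |b|\,|t|^{-3/2}$ rather than $|t|^{-1/2}$ uniformly in $b=|x|+|y|-s$, so the final bound you write down is not justified. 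The easy fix is to split $\xi/(q-i\xi)=i-iq/(q-i\xi)$ first; the constant term produces a harmless Gaussian (and in fact cancels against a piece of $R_1$, recovering exactly the free-propagator kernel), while the remaining $q/(q-i\xi)$ carries no stray factor of $\xi$ and your subordination argument then goes through verbatim.

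This is precisely the route the paper takes, decomposing the resolvent as ``free resolvent plus potential correction'' rather than as $R_1+R_2$; the potential piece then carries the full $1/\sqrt{\lambda}$ prefactor, the Jacobian cancels cleanly, and the amplitude is $q/(q-i\xi)$ from the outset. For the remaining oscillatory integral the paper uses Plancherel, bounding $\F(e^{-it\lambda^2/2+i\theta\lambda})$ in $L^\infty$ by $|t|^{-1/2}$ (uniformly in $\theta$) and $\F((q-i\lambda)^{-1})$ in $L^1$. Since the latter Fourier transform is, up to a constant, exactly $e^{qs}\chi_{\{s>0\}}$, the paper's Plancherel step and your subordination identity are really the same computation written two ways.
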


\begin{proof}[Proof of Lemma~\ref{L:dispersive}] It is clear that $e^{-itH}P_c$ maps $L^2$ to $L^2$ boundedly.   For the $L^1\to L^\infty$ estimate, we start from \eqref{stone}.  The desired estimate is well-known for the case of the free Schr\"odinger equation, and hence we consider only the contribution of the potential.  After a change of variables, we are left to prove
\[
\sup_x \biggl| \int f(y) \int e^{-it\frac{\lambda^2}{2}
-i(|x|+|y|)\lambda}\tfrac{q}{q-i\lambda}\,d\lambda\,dy\biggr| \leq |t|^{-\frac12} \|f\|_{L^1}. 
\]
We apply Plancherel in the $d\lambda$ integral and observe (by explicit computation) that
\[
\sup_{\theta\in\R} \| \F\bigl(e^{-it\frac{\lambda^2}{2}+i\theta\lambda}\bigr)\|_{L^\infty} \lesssim |t|^{-\frac12}.
\]
Therefore the proof boils down to showing that $\F((q-i\lambda)^{-1})\in L^1$.  In fact, by Cauchy--Schwarz and Plancherel,
\begin{equation}\label{L1}
\|\F((q-i\lambda)^{-1})\|_{L^1} \lesssim \|(1-\partial_{\lambda}^2)(q-i\lambda)^{-1}\|_{L^2} \lesssim 1. 
\end{equation}
The result follows. \end{proof}

We turn to the following weighted estimates for the linear propagator.

\begin{proposition}[Local smoothing estimates]\label{P:Str2} The following estimates hold:
\begin{align}
\|\langle x\rangle^{-\frac32}e^{-itH}P_c f\|_{L_x^\infty L_t^2}&\lesssim \|f\|_{L^2}, \label{P:LS1}\\
\|\partial_x e^{-itH}P_c f\|_{L_x^\infty L_t^2}& \lesssim \|f\|_{H^{\frac12}}.\label{P:LS2}
\end{align}
\end{proposition}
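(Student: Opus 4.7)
The plan is to combine the spectral representation \eqref{stone}, the resolvent decomposition \eqref{Res1}--\eqref{Res2}, and Plancherel in $t$.  After substituting $\lambda=\xi^2$ (so that $\sqrt\lambda=\xi$ and $d\lambda=2\xi\,d\xi$), Plancherel in $t$ yields, for each fixed $x$,
\[
\|e^{-itH}P_c f(x)\|_{L_t^2}^2 \lesssim \int_0^\infty |E(\xi^2)f(x)|^2\,\xi\,d\xi,
\]
where $E(\xi^2)f(x)=\int E(\xi^2;x,y)f(y)\,dy$ is built from $R_1$ and $R_2$.  The estimate \eqref{P:LS1} (resp.\ \eqref{P:LS2}) then reduces to bounding this quantity (resp.\ with $\partial_x$ applied to the kernel) pointwise in $x$ by $\langle x\rangle^{3}\|f\|_{L^2}^2$ (resp.\ uniformly in $x$ by $\|f\|_{H^{1/2}}^2$).

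For the $R_2$ contribution, integration against $f$ and splitting $y\gtrless 0$ produces a sum of truncated Fourier-type transforms $\tilde\F f(\pm\xi)$ satisfying \eqref{FT-bds}, multiplied by $(q\mp i\xi)^{-1}$.  Since $|\xi|/|q\mp i\xi|^2\lesssim 1$ uniformly in $\xi$ (using $q<0$), the first bound in \eqref{FT-bds} suffices to bound the $R_2$ contribution by $\|f\|_{L^2}^2$ with no weight needed.  For the derivative estimate, $\partial_x e^{i|x|\xi}=i\xi\operatorname{sgn}(x)\,e^{i|x|\xi}$ adds a factor of $\xi$, but $|\xi|^3/|q\mp i\xi|^2\lesssim|\xi|$ together with the third bound in \eqref{FT-bds} gives $\lesssim\|f\|_{H^{1/2}}^2$.

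For the $R_1$ contribution, the case analysis \eqref{R1} shows that the kernel vanishes when $xy\leq 0$.  In the remaining cases (say $y\geq x\geq 0$), the kernel reduces to $\sin(x\xi)\,e^{iy\xi}/\xi$, so integration against $f$ gives $\sin(x\xi)/\xi$ times $\tilde\F[\chi f](\pm\xi)$.  For the derivative estimate, $\partial_x[\sin(x\xi)/\xi]=\cos(x\xi)$, so the integrand becomes $|\tilde\F[\chi f]|^2|\xi|$ and is controlled by $\|f\|_{H^{1/2}}^2$ via the third bound in \eqref{FT-bds}.  For the weighted estimate, using $|\sin(x\xi)|/|\xi|\leq\min(|x|,|\xi|^{-1})$ and splitting the $\xi$-integral at $|\xi|\sim 1/|x|$ yields a pointwise-in-$x$ bound of order $|x|\|f\|_{L^2}^2$, which the squared weight $\langle x\rangle^{-3}$ absorbs.

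The main technical point is handling the $R_1$ contribution: although the kernel itself is pointwise finite thanks to the bound $|\sin(x\xi)|/|\xi|\leq|x|$, the weighted $L_\xi^2$ integral grows linearly in $|x|$, so the unweighted estimate $\|e^{-itH}P_c f\|_{L_x^\infty L_t^2}\lesssim\|f\|_{L^2}$ fails (as it already does in the free one-dimensional setting).  The weight $\langle x\rangle^{-3/2}$ in \eqref{P:LS1} is exactly the right strength to absorb this linear growth; in \eqref{P:LS2} the derivative $\partial_x$ instead neutralizes the $\sin/\xi$ factor directly, removing the need for a weight at the price of half a derivative on the right-hand side.
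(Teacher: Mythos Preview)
Your approach is essentially the paper's: reduce via Plancherel in $t$ to resolvent estimates, decompose $R=R_1+R_2$, handle $R_2$ using $|q-i\xi|\gtrsim 1$ together with \eqref{FT-bds}, and for $R_1$ exploit the oscillation to neutralize the $1/\sqrt\lambda$ singularity. A cosmetic difference is that you split the $\xi$-integral at the $x$-dependent scale $\xi\sim 1/|x|$, whereas the paper uses a fixed low/high-energy cutoff at $\lambda\sim 1$; both routes work.

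There is, however, one genuine gap in your treatment of $R_1$ for \eqref{P:LS1}. Your claim that in every nonvanishing case the kernel has the form $\sin(x\xi)\,e^{iy\xi}/\xi$ is only correct when $|y|\geq|x|$ (for instance the case $y\geq x\geq 0$ that you display). In the complementary regime $|y|\leq|x|$ with $xy>0$ --- the case $0\geq y\geq x$ in \eqref{R1}, or by symmetry $x\geq y\geq 0$ --- the kernel is instead $e^{\pm ix\xi}\sin(y\xi)/\xi$, so the sine factor does \emph{not} pull out of the $y$-integral and your pointwise bound $|\sin(x\xi)|/|\xi|\leq\min(|x|,|\xi|^{-1})$ is unavailable. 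For this regime the paper uses $|\sin(y\xi)|\leq|y|\,|\xi|$ followed by Cauchy--Schwarz in $y$ on the finite interval, producing a factor $|x|^{3/2}$ that is absorbed by the full weight $\langle x\rangle^{-3/2}$. Your $x$-dependent split can be adapted to this case as well (for low $\xi$ one again uses $|\sin(y\xi)|\leq|y|\xi$ and Cauchy--Schwarz; for high $\xi$ one writes $\int_0^{|x|}\sin(y\xi)f(y)\,dy$ as a combination of truncated Fourier transforms and uses $1/\xi\leq|x|$), but the argument is genuinely different from the one you wrote and you have not supplied it. The derivative estimate \eqref{P:LS2} is unaffected by this issue: once $\partial_x$ hits the kernel the $1/\xi$ singularity disappears in every case, and one lands directly on the third bound in \eqref{FT-bds}.
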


\begin{proof}[Proof of Proposition~\ref{P:Str2}]  We begin by reducing each estimate to one given purely in terms of the resolvent.  Let $m=m(x,\partial_x)\in\{\langle x\rangle^{-\frac32},\partial_x\}$ and $X=L^2$ or $H^{\frac12}$. We will show
\begin{equation}\label{reduce1}
\| m e^{-itH}P_c\|_{X\to L_x^\infty L_t^2} \lesssim \| m E(\lambda)\|_{X\to L_x^\infty L_\lambda^2}. 
\end{equation}
To see this, we let $G\in L_x^1 L_t^2$ and use Plancherel to estimate
 \begin{align*}
\bigl|\langle me^{-itH}P_c f,G\rangle_{L_{t,x}^2} \bigr| & =\biggl| 
\int e^{-it\lambda} \overline{G(t,x)} m(x,\partial_x) E(\lambda;x,y)f(y)\,d\lambda\,dy\,dx\,dt\biggr| \\
& = \biggl|\int \overline{[\F_t^{-1} G](\lambda,x)} [mE(\lambda)f](x)\,dx\,d\lambda\biggr| \\
& \lesssim \|\F_t^{-1} G\|_{L_x^1 L_\lambda^2} \|mE(\lambda)f\|_{L_x^\infty L_\lambda^2} \\
& \lesssim \|G\|_{L_x^1 L_t^2} \|mE(\lambda)\|_{X\to L_x^\infty L_\lambda^2}\|f\|_{X}. 
\end{align*}
Thus \eqref{reduce1} follows. 

Using \eqref{reduce1}, we see that \eqref{P:LS1} will follow from
\begin{equation}\label{E:LS1}
\| \langle x\rangle^{-\frac32} R(\lambda\pm i0) f\|_{L_x^\infty L_\lambda^2} \lesssim \|f\|_{L^2}. 
\end{equation}
We focus on $R(\lambda+i0)$ and write $R=R_1+R_2$ as in \eqref{Res1} and \eqref{Res2}.   The contribution of \eqref{Res2} is easily handled.  In fact, by a change of variables,
\begin{align*}
\biggl\|\tfrac{1}{q-i\sqrt{\lambda}}\int e^{i|y|\sqrt{\lambda}}f(y)\,dy\biggr\|_{L_\lambda^2} ^2 & = \biggl\| \tfrac{1}{q-i\sqrt{\lambda}} \tilde\F f(\sqrt{\lambda})\biggr\|_{L_\lambda^2}^2 \\
& \lesssim \int \frac{|\mu|}{q^2+\mu^2} |\tilde\F f(\mu)|^2\,d\mu \lesssim \|f\|_{L^2}^2.
\end{align*}

To estimate the contribution of \eqref{Res1}, we split into low and high energies.  We let $\chi(\lambda)$ denote a smooth cutoff to $|\lambda|\leq1$ and write $\chi^c=1-\chi$. On the support of $\chi^c$, we can argue as we did for \eqref{Res2}, changing variables and estimating the contribution via
\[
\int_{|\mu|\geq 1}\tfrac{1}{|\mu|}|\tilde\F f(\mu)|^2\,d\mu \lesssim \|f\|_{L^2}^2,
\]
which is acceptable. 

We turn to the low energy contribution of \eqref{Res1}.  Here we use \eqref{R1}; in particular, we will consider the cases $y\geq x\geq 0$ and $0\geq y\geq x$.  In the first case, we use the bound
\[
|e^{ix\sqrt{\lambda}}-e^{-ix\sqrt{\lambda}}| \lesssim |x|\sqrt{\lambda}, 
\] 
and estimate
\begin{align*}
\biggl\| & \langle x\rangle^{-1} \chi(\lambda)\tfrac{1}{\sqrt{\lambda}}\bigl[e^{ix\sqrt{\lambda}}-e^{-ix\sqrt{\lambda}}\bigr]\int_x^\infty e^{i\sqrt{\lambda}y}f(y)\,dy\biggr\|_{L_x^\infty L_\lambda^2} \\
& \lesssim \| \chi(\lambda)\tilde\F f(\sqrt{\lambda})\|_{L_\lambda^2} \lesssim \|\sqrt{\mu}\tilde\F f(\mu)\|_{L_\mu^2(|\mu|\leq 1)}\lesssim \|f\|_{L^2}, 
\end{align*}
which is acceptable.  In the remaining case, we use Cauchy--Schwarz to estimate 
\begin{align*}
\biggl\| & \langle x\rangle^{-\frac32} \chi(\lambda)\tfrac{1}{\sqrt{\lambda}}\int_x^0\bigl[e^{i\sqrt{\lambda}y}-e^{-i\sqrt{\lambda}y}\bigr]f(y)\,dy \biggr\|_{L_x^\infty L_\lambda^2} \\
& \lesssim \|\langle x\rangle^{-\frac32}\chi(\lambda)\int_0^x |y|\,|f(y)|\,dy \biggr\|_{L_x^\infty L_\lambda^2} \lesssim \bigl\|\chi(\lambda) |x|^{\frac32}\langle x\rangle^{-\frac32} \|f\|_{L^2}\bigr\|_{L_x^\infty L_\lambda^2} \lesssim \|f\|_{L^2},
\end{align*}
which is acceptable.  This completes the proof of \eqref{P:LS1}. 

We turn to \eqref{P:LS2}.  Using \eqref{reduce1}, it suffices to prove the following:
\begin{equation}\label{E:LS2}
\|\partial_x R(\lambda\pm i0)f\|_{L_x^\infty L_\lambda^2}\lesssim \|f\|_{H^{\frac12}}. 
\end{equation}
Again we focus on $R(\lambda+i0)$.  Writing $R=R_1+R_2$ as in \eqref{Res1} and \eqref{Res2}, we observe that 
\begin{align*}
\|\partial_x R(\lambda+i0)f\|_{L_x^\infty L_{\lambda}^2} &\lesssim \|a(x,\lambda)\tilde\F f(\sqrt{\lambda})\|_{L_x^\infty L_{\lambda}^2}
\end{align*}
for some bounded function $a$.  Thus the desired estimate follows from a change of variables and \eqref{FT-bds}; indeed,
\[
\|\tilde\F f(\sqrt{\lambda})\|_{L_\lambda^2} \lesssim \| |\mu|^{\frac12}\tilde\F f(\mu)\|_{L_\mu^2} \lesssim \|f\|_{H^{\frac12}}. 
\]
This completes the proof of \eqref{E:LS2} and hence the proof of Proposition~\ref{P:Str2}. \end{proof}

Combining the usual Strichartz estimates (Proposition~\ref{P:Str1}) with the weighted local smoothing estimate in Proposition~\ref{P:Str2} yields the following corollary:

\begin{corollary}\label{C:Str} The following estimate holds: 
\[
\biggl\| \int_0^t e^{-i(t-s)H}P_c F(s)\,ds\biggr\|_{L_t^4 L_x^\infty \cap L_t^\infty L_x^2} \lesssim \|\langle x\rangle^{\frac52} F\|_{L_{t,x}^2}. 
\]
\end{corollary}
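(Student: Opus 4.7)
The plan is to deduce the corollary by combining the homogeneous Strichartz bound of Proposition~\ref{P:Str1} with (the dual of) the local smoothing estimate \eqref{P:LS1}, using the factorization
\[
\int_0^t e^{-i(t-s)H}P_c F(s)\,ds = e^{-itH}\int_0^t e^{isH}P_c F(s)\,ds.
\]

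First, I would dualize \eqref{P:LS1}. Since $H$ is self-adjoint (so $(e^{-itH})^* = e^{itH}$) and $P_c$ commutes with the propagator, pairing $\langle x\rangle^{-3/2}e^{-itH}P_c f$ with $G\in L_x^1 L_t^2$ in $L^2_{t,x}$ and applying \eqref{P:LS1} gives
\[
\biggl\| \int e^{isH} P_c \,G(s,\cdot)\,ds \biggr\|_{L_x^2} \lesssim \|\langle x\rangle^{3/2} G\|_{L_x^1 L_t^2}.
\]

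Second, I would absorb the mismatch between the $L_x^1 L_t^2$ norm and the $L_{t,x}^2$ norm appearing on the right-hand side of the corollary. By Cauchy--Schwarz in $x$ together with the fact that $\langle x\rangle^{-1}\in L_x^2(\R)$ (this is where the one-dimensionality enters),
\[
\|\langle x\rangle^{3/2}G\|_{L_x^1 L_t^2} \leq \|\langle x\rangle^{-1}\|_{L_x^2}\,\|\langle x\rangle^{5/2}G\|_{L_x^2 L_t^2} \lesssim \|\langle x\rangle^{5/2}G\|_{L_{t,x}^2},
\]
by Fubini. Applying this to $G=F$ and combining with the homogeneous Strichartz bound in Proposition~\ref{P:Str1} applied to the $L_x^2$ function $\int e^{isH}P_c F(s)\,ds$, I obtain the untruncated version
\[
\biggl\| \int_\R e^{-i(t-s)H} P_c F(s)\,ds \biggr\|_{L_t^4 L_x^\infty \cap L_t^\infty L_x^2} \lesssim \|\langle x\rangle^{5/2} F\|_{L_{t,x}^2}.
\]

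Finally, to replace the full-time integral by the truncated Duhamel integral $\int_0^t$, I would invoke the Christ--Kiselev lemma. The input norm is $L_t^2$ (weighted in $x$), while the two output norms are $L_t^4$ and $L_t^\infty$; since $2<4$ and $2<\infty$, Christ--Kiselev applies, losing only a harmless constant. I don't anticipate a major obstacle here: the only slightly delicate point is tracking the weights correctly through the duality and verifying that $\langle x\rangle^{-1}\in L^2$ suffices to absorb them, which is precisely what makes the power $\tfrac{5}{2}$ (rather than $\tfrac{3}{2}$) appear on the right.
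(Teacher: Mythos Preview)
Your proposal is correct and follows essentially the same route as the paper: dualize \eqref{P:LS1}, apply Cauchy--Schwarz in $x$ to trade $\langle x\rangle^{3/2}L_x^1 L_t^2$ for $\langle x\rangle^{5/2}L_{t,x}^2$, compose with the homogeneous Strichartz estimate, and finish with Christ--Kiselev. The only cosmetic difference is that the paper carries $P_cF$ through the intermediate steps rather than $F$, which is immaterial since $P_c$ commutes with $e^{isH}$ and is bounded on the relevant weighted spaces.
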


\begin{proof} Using the Strichartz estimate Proposition~\ref{P:Str1}, the dual estimate to \eqref{P:LS1}, and Cauchy--Schwarz, we have
\begin{align*}
\biggl\| \int_\R e^{-i(t-s)H}P_cF(s)\,ds\biggr\|_{L_t^4 L_x^\infty\cap L_t^\infty L_x^2} & \lesssim \biggl\|\int_\R e^{isH}P_cF(s)\,ds\biggr\|_{L^2} \\
& \lesssim \|\langle x\rangle^{\frac32} P_cF\|_{L_x^1 L_t^2} \\
& \lesssim \|\langle x\rangle^{\frac52} P_c F\|_{L_{t,x}^2}. 
\end{align*}
The desired estimate now follows from the Christ--Kiselev lemma 
\cite{CK}. \end{proof}

We will also need the following inhomogeneous local smoothing estimates. 

\begin{proposition}\label{P:Str3} For any $t\geq 0$, we have 
\begin{align}
\biggl\| \langle x\rangle^{-1}\int_0^t e^{-i(t-s)H}P_cF(s)\,ds\biggr\|_{L_x^\infty L_t^2} & \lesssim \|\langle x\rangle F\|_{L_x^1 L_t^2}, \label{E:Str31} \\
\biggl\| \int_0^t \partial_x e^{-i(t-s)H}P_cF(s)\,ds\biggr\|_{L_x^\infty L_t^2} & \lesssim \|F\|_{L_x^1 L_t^2}.\label{E:Str32} 
\end{align}
\end{proposition}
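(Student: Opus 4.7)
The plan is to follow the strategy used for Proposition~\ref{P:Str2}: reduce each inhomogeneous estimate, via Plancherel in time, to a uniform-in-spectral-parameter pointwise bound on the integral kernel of $R(\tau+i0)P_c$, and then verify that bound from the explicit resolvent formulas \eqref{Res1}--\eqref{Res2}.

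First I would extend $F$ by zero for $t<0$, so that $v(t):=\int_0^t e^{-i(t-s)H}P_cF(s)\,ds$ becomes the time-convolution of $F$ with the operator-valued kernel $\chi_{t\geq 0}\,e^{-itH}P_c$. Using the spectral representation of $HP_c$ together with a standard $+i0$ regularization, this kernel has time Fourier multiplier equal (up to a constant) to $R(-\tau+i0)P_c$. Plancherel in time then yields
\[
\|v(\cdot,x)\|_{L_t^2}\sim\bigl\|[R(\tau+i0)P_c]\hat F(-\tau,\cdot)(x)\bigr\|_{L_\tau^2},\qquad \|F(\cdot,y)\|_{L_t^2}=\|\hat F(\cdot,y)\|_{L_\tau^2},
\]
and the analogous identity when $\partial_x$ is inserted. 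Applying Minkowski's inequality in $y$ and then the pointwise $L_\tau^\infty\cdot L_\tau^2$ splitting in $\tau$, the two claimed estimates reduce to the uniform-in-$\tau$ pointwise kernel bounds
\[
\sup_{\tau\in\R,\,x,y\in\R}\frac{\bigl|[R(\tau+i0)P_c](x,y)\bigr|}{\langle x\rangle\langle y\rangle}\lesssim 1 \qquad\text{and}\qquad \sup_{\tau,x,y}\bigl|\partial_x[R(\tau+i0)P_c](x,y)\bigr|\lesssim 1,
\]
where $[R(\tau+i0)P_c](x,y)=R(\tau+i0;x,y)+\phi_0(x)\phi_0(y)/(\tau+\tfrac12 q^2+i0)$.

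To verify these bounds I would split by sign of $\tau$. For $\tau>0$ I would use $R=R_1+R_2$ from \eqref{Res1}--\eqref{Res2}: at high frequencies $|R|\lesssim\tau^{-1/2}\leq 1$, while at low frequencies the Taylor-type cancellations already exploited in the proof of Proposition~\ref{P:Str2} give $|R_1|\lesssim|x-y|\lesssim\langle x\rangle+\langle y\rangle$ and $|R_2|\lesssim 1$, and the $P_c$ correction is uniformly bounded since its denominator satisfies $\tau+\tfrac12 q^2\geq\tfrac12 q^2>0$; this yields the $\langle x\rangle\langle y\rangle$ bound. Differentiating in $x$ produces a factor $\sqrt\tau$ that exactly absorbs the $\tau^{-1/2}$ prefactor, leaving uniformly bounded terms such as $\sgn(x-y)e^{i\sqrt\tau|x-y|}$ and $\sqrt\tau\,\sgn(x)(q-i\sqrt\tau)^{-1}e^{i(|x|+|y|)\sqrt\tau}$ (the latter bounded by $\sqrt\tau/\sqrt{q^2+\tau}\leq 1$). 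For $\tau<0$ the kernel is the analytic continuation of \eqref{Res1}--\eqref{Res2}, with oscillating exponentials replaced by $e^{-\sqrt{|\tau|}|x-y|}$ and $e^{-\sqrt{|\tau|}(|x|+|y|)}$; this gives uniform boundedness away from the eigenvalue $\tau=-\tfrac12 q^2$, and at that point the pole of $R(\tau+i0;x,y)$ is exactly cancelled by the pole of the $P_c$ correction term.

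The main obstacle I anticipate is verifying the pole cancellation at $\tau=-\tfrac12 q^2$: one must expand $R(\tau+i0;x,y)$ locally around the eigenvalue using the explicit form $\phi_0(x)=|q|^{1/2}e^{q|x|}$, match the residue of the pole against the polar part of $\phi_0(x)\phi_0(y)/(\tau+\tfrac12 q^2+i0)$ (with the sign of the residue determined by the factor $q/(q-i\sqrt\lambda)$ in $R_2$, analytically continued), and then read off a uniform bound in $(x,y)$ on the remaining regular part. Once this local analysis is in place, combining it with the high-, low-, and intermediate-frequency estimates closes the uniform kernel bounds and hence the proposition.
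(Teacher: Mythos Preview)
Your approach is correct in outline but takes a genuinely different route from the paper's. The paper does \emph{not} Fourier-transform the retarded kernel $\chi_{t\geq 0}e^{-itH}P_c$ directly. Instead it first uses the decomposition \eqref{mid},
\[
2\int_0^t e^{-i(t-s)H}P_cF(s)\,ds=\sum_{\chi}\pm\int_{\R}e^{-i(t-s)H}P_c[\chi(s)F(s)]\,ds,\qquad \chi\in\{1,\chi_{(0,\infty)},\chi_{(-\infty,0)}\},
\]
to replace the retarded integral by full-line convolutions in $s$, and only then inserts the spectral resolution $e^{-itH}P_c=\int_0^\infty e^{-it\lambda}E(\lambda)\,d\lambda$. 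After Plancherel in $t$ and Minkowski, the task reduces to
\[
\sup_{\lambda>0}\|\langle x\rangle^{-1}R(\lambda\pm i0)f\|_{L_x^\infty}\lesssim\|\langle x\rangle f\|_{L^1}
\qtq{and}
\sup_{\lambda>0}\|\partial_x R(\lambda\pm i0)f\|_{L_x^\infty}\lesssim\|f\|_{L^1},
\]
which are verified exactly from the splitting $R=R_1+R_2$ and the case analysis \eqref{R1}, with no reference to $\lambda\leq 0$ and no appearance of the eigenprojection.

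The practical difference is this: because the spectral measure $E(\lambda)$ is supported on $[0,\infty)$, the paper never sees the eigenvalue $-\tfrac12 q^2$; the ``$P_c$'' is already absorbed in passing to $E(\lambda)$, and the pointwise kernel bounds reduce to the same low/high frequency analysis for $\lambda>0$ that you also carry out. Your route, by contrast, keeps the retarded cutoff and obtains the multiplier $R(\tau+i0)P_c$ for \emph{all} $\tau\in\R$, which forces you to treat the analytic continuation to $\tau<0$ and to check the residue cancellation between $R(\tau;x,y)$ and $\phi_0(x)\phi_0(y)/(\tau+\tfrac12 q^2)$ (both for the kernel and for its $x$-derivative). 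That cancellation does hold, since $R(\tau)P_c$ is analytic on the resolvent set of $HP_c$, but carrying it out with uniform-in-$(x,y)$ control of the regular part is genuine extra work that the paper's argument sidesteps entirely. In short: your proof is valid and more ``direct'' in its use of the retarded resolvent, while the paper's is cleaner because the trick \eqref{mid} confines the entire analysis to the positive spectral axis.
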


\begin{proof}[Proof of Proposition~\ref{P:Str3}]  We begin with the identity
\begin{equation}\label{mid}
\begin{aligned}
2\int_0^t e^{-i(t-s)H}P_cF(s)\,ds & = \int_\R e^{-i(t-s)H}P_cF(s) \,ds \\
& \quad + \int_{0}^\infty e^{-i(t-s)H}P_cF(s)\,ds \\
& \quad -\int_{-\infty}^0 e^{-i(t-s)H}P_cF(s)\,ds. 
\end{aligned}
\end{equation}
In fact, this is a consequence of 
\[
\int_\R e^{-i(t-s)H}P_cF(s)\,ds = \int_{-\infty}^t e^{-i(t-s)H}P_cF(s)\,ds - \int_t^\infty e^{-i(t-s)H}P_cF(s)\,ds,
\]
which follows from the fact that both sides solve 
\[
i\partial_t u = Hu\qtq{with}u(0)=\int_\R e^{isH}P_cF(s)\,ds.
\] 

In light of \eqref{mid}, it therefore suffices to estimate
\[
\int_\R e^{-i(t-s)H}P_c\chi(s)F(s)\,ds,
\]
where $\chi\in\{1,\chi_{(0,\infty)},\chi_{(-\infty,0)}\}$. 

Similar to the proof of Proposition~\ref{P:Str2}, we will use \eqref{stone} and Plancherel to reduce the desired bounds to an estimate given in terms of the resolvent.  In particular, we write
\begin{align*}
\int e^{-i(t-s)H}P_c\chi F(s)\,ds & = \int e^{-it\lambda}E(\lambda)\int e^{is\lambda}\chi(s)F(s)\,ds\,d\lambda \\
& =\F_\lambda\bigl[E(\lambda) \F_s^{-1}(\chi F)\bigr)](t),
\end{align*}
where we use subscripts to denote the variable of integration in the definition of the Fourier transform. Thus, writing $m=m(x,\partial_x)\in\{\langle x\rangle^{-1},\partial_x\}$ and $X=\langle x\rangle^{-1} L^1$ or $X=L^1$, we use Plancherel and Minkowski's inequality to estimate
\begin{align*}
\biggl\| m\int e^{-i(t-s)H}P_c \chi F(s)\,ds\biggr\|_{L_x^\infty L_t^2} & \lesssim \| mE(\lambda)\F_s^{-1}(\chi F)\|_{L_x^\infty L_\lambda^2} \\
& \lesssim \|mE(\lambda)\F_s^{-1}(\chi F)\|_{L_\lambda^2 L_x^\infty} \\
& \lesssim \bigl\| \|mE(\lambda)\|_{X\to L_x^\infty}\|\F_s^{-1}(\chi F)(\lambda)\|_{X} \bigr\|_{L_\lambda^2} \\
& \lesssim \bigl[\sup_{\lambda}\|mE(\lambda)\|_{X\to L_x^\infty}\bigr]\|F\|_{L_t^2 X} \\
& \lesssim \bigl[\sup_{\lambda}\|mE(\lambda)\|_{X\to L_x^\infty}\bigr]\|F\|_{X L_t^2}.
\end{align*}
The proof of \eqref{E:Str31} and \eqref{E:Str32} therefore reduces to the following two estimates:
\begin{align}
\sup_{\lambda} \| \langle x\rangle^{-1} R(\lambda\pm i0) f\|_{L_x^\infty} & \lesssim \|\langle x\rangle f\|_{L_x^1}, \label{31} \\
\sup_{\lambda}\|\partial_x R(\lambda\pm i0)f\|_{L_x^\infty} & \lesssim \|f\|_{L_x^1}.\label{32}
\end{align}

We consider $R(\lambda+i0)$, the other case being similar.  We decompose the kernel as $R_1+R_2$, as in \eqref{Res1} and \eqref{Res2}.  The contribution of $R_2$ to both \eqref{31} and \eqref{32} is handled easily. In fact, since $|q-i\sqrt{\lambda}|\geq |q|$, we have
\[
\|R_2(\lambda) f\|_{L_x^\infty} + \|\partial_x R_2(\lambda)f\|_{L_x^\infty} \lesssim \|\tilde Ff \|_{L_x^\infty} \lesssim \|f\|_{L_x^1}
\]
uniformly in $\lambda$.  

We turn to the contribution of $R_1$.  The contribution to \eqref{32} is straightforward, as we can estimate
\[
\|\partial_x R_1(\lambda)f\|_{L_x^\infty} \lesssim |\tilde\F f(\sqrt{\lambda})| \lesssim \|f\|_{L^1},
\]
uniformly in $\lambda$. For the contribution to \eqref{31}, we recall \eqref{R1}.  In particular, we need only consider the cases $y\geq x\geq 0$ and $0\geq y\geq x$.  In the first case, we estimate
\[
\biggl| \tfrac{1}{\sqrt{\lambda}}(e^{-ix\sqrt{\lambda}}-e^{ix\sqrt{\lambda}})\int_x^\infty e^{i\sqrt{\lambda}y}f(y)\,dy\biggr| \lesssim |x| \tilde\F f(\sqrt{\lambda}),
\]
and hence the desired bound holds in this regime (cf. \eqref{FT-bds}). Finally, if $0\geq y\geq x$, we estimate
\[
\biggl| \tfrac{1}{\sqrt{\lambda}} \int_x^0 [e^{i\sqrt{\lambda}y}-e^{-i\sqrt{\lambda}y}]f(y)\,dy\biggr| \lesssim \| yf(y)\|_{L^1}
\]
uniformly in $x$ and $\lambda$.  Thus the desired bound holds in this regime as well.  This completes the proof of Proposition~\ref{P:Str3}.\end{proof}

Finally, let us record one additional corollary of Proposition~\ref{P:Str2}. 

\begin{corollary}\label{C:Str2} The following estimates hold: 
\begin{align*}
\biggl\| \langle x\rangle^{-\frac32} \int_0^t e^{-i(t-s)H}P_c F(s)\,ds\biggr\|_{L_x^\infty L_t^2(\R\times[0,T])} & \lesssim \|F\|_{L_t^1 L_x^2([0,T]\times\R)}, \\
\biggl\| \partial_x\int_0^t e^{-i(t-s)H}P_cF(s)\,ds\biggr\|_{L_x^\infty L_t^2(\R\times[0,T])} & \lesssim \|F\|_{L_t^1 H_x^{\frac12}([0,T]\times\R)}.
\end{align*}
\end{corollary}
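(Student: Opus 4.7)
The plan is to derive both estimates directly from the homogeneous local smoothing bounds of Proposition~\ref{P:Str2} by Minkowski's inequality. Writing
\[
\int_0^t e^{-i(t-s)H}P_c F(s)\,ds = \int_0^T \chi_{\{s\leq t\}}\, e^{-i(t-s)H}P_c F(s)\,ds,
\]
I would apply Minkowski first in $L_t^2([0,T])$ for each fixed $x\in\R$, and then use the elementary estimate $\sup_x \int g(s,x)\,ds \leq \int \sup_x g(s,x)\,ds$ to move the $s$-integration outside the $L_x^\infty$. Both claims are thereby reduced to a uniform-in-$s$ bound on the corresponding homogeneous quantity.

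For the first estimate, the inner quantity becomes
\[
\bigl\|\langle x\rangle^{-\frac32}\chi_{\{s\leq t\}} e^{-i(t-s)H}P_c F(s)\bigr\|_{L_x^\infty L_t^2([0,T])}.
\]
Dropping the cutoff $\chi_{\{s\leq t\}}\leq 1$, extending the $t$-integration to all of $\R$, and translating $t\mapsto t+s$ turns this into $\|\langle x\rangle^{-\frac32}e^{-itH}P_c F(s)\|_{L_x^\infty L_t^2(\R)}$, which is independent of $s$. The first estimate of Proposition~\ref{P:Str2} bounds this by $\|F(s)\|_{L^2}$, and integrating in $s$ over $[0,T]$ produces exactly $\|F\|_{L_t^1 L_x^2}$, as desired.

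The second estimate follows by the same recipe with $\langle x\rangle^{-\frac32}$ replaced by $\partial_x$, invoking instead the second estimate of Proposition~\ref{P:Str2} to bound the inner homogeneous quantity by $\|F(s)\|_{H^{\frac12}}$; integrating in $s$ yields $\|F\|_{L_t^1 H_x^{\frac12}}$. I do not anticipate any substantive obstacle: the argument is the standard Minkowski-type passage from a homogeneous local smoothing estimate to its $L_t^1$-in-data inhomogeneous counterpart. The only mild points to verify are that replacing $[0,T]$ by $\R$ in the $L_t^2$ norm and translating are harmless (which is immediate from the positivity of the cutoff and translation invariance of the Lebesgue measure) and that the interchange of $\sup_x$ with $\int ds$ is justified by the trivial inequality noted above.
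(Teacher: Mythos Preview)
Your argument is correct. It differs from the paper's proof in one respect: the paper first invokes the identity \eqref{mid} to replace $\int_0^t$ by integrals over $\R$, $(0,\infty)$, and $(-\infty,0)$, each of which factors as $e^{-itH}$ applied to a fixed $X$-valued function (with $X=L^2$ or $H^{1/2}$); Proposition~\ref{P:Str2} is then applied once to that function, and Minkowski pushes the $L_t^1$ through $\|e^{isH}\cdot\|_X$. You instead apply Minkowski directly to the truncated integral, carry the cutoff $\chi_{\{s\le t\}}$ inside the $L_t^2$ norm, drop it pointwise, and invoke Proposition~\ref{P:Str2} for each fixed $s$. Your route is more elementary in that it bypasses the decomposition \eqref{mid} entirely; the paper's route has the minor conceptual advantage of mirroring the structure used in Proposition~\ref{P:Str3}, where the decomposition is genuinely needed to access the Plancherel/resolvent machinery. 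For this corollary, however, both arguments are equally short and yield the same bound.
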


\begin{proof} To rid ourselves of the integral over $[0,t]$ we again use the decomposition \eqref{mid} as in Proposition~\ref{P:Str3} and endeavor to estimate $\chi F$, with 
\[
\chi\in\{1,\chi_{(0,\infty)},\chi_{(-\infty,0)}\}.
\] 
Let $m=m(x,\partial_x)\in\{\langle x\rangle^{-\frac32},\partial_x\}$ and write $X=L^2$ if $m=\langle x\rangle^{-\frac32}$ and $X=H^{\frac12}$ if $m=\partial_x$.  Then using Proposition~\ref{P:Str2}, boundedness of $e^{isH}$ on $X$, and Minkowski's inequality, we may estimate 
\begin{align*}
\biggl\| m\int_\R e^{-i(t-s)H}P_c\chi(s) F(s)\,ds\biggr\|_{L_x^\infty L_t^2} & \lesssim \biggl\|\int_\R e^{-isH} \chi(s)F(s)\,ds\biggr\|_{X} \\
& \lesssim \|F\|_{L_t^1 X}.
\end{align*}
The result follows.
\end{proof}

We close this section with a technical result relating the usual Sobolev spaces with those defined in terms of $H$.  We state the result we need as follows.  In the following, we let $m(\partial_x)$ denote the Fourier multiplier operator with symbol $m(\mu)$. 

\begin{lemma}\label{L:equiv} We have 
\begin{equation}\label{compare-ip}
\langle f, HP_c g\rangle = \langle f,-\tfrac12\partial_x^2 g\rangle + B(f,g),
\end{equation}
where $B(f,g)$ is a linear combination of terms of the form
\[
\langle m(\partial_x)\partial_xf,\partial_x g\rangle,\qtq{where} m(\mu)=(q-i\mu)^{-1}.
\]
%

Consequently, for $f=P_cf$, 
\begin{equation}\label{equiv}
 \|\sqrt{H}f\|_{L^2}\lesssim \|f\|_{\dot H^1}\qtq{and} \| f\|_{\dot H^1} \lesssim \|\sqrt{H}f\|_{L^2}+\|f\|_{L^2}.
\end{equation}
\end{lemma}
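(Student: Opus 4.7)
The plan is to first derive the bilinear-form identity \eqref{compare-ip} from the explicit structure of $HP_c$, and then deduce the norm equivalences \eqref{equiv} as a routine consequence. I would begin by writing
\[
HP_c g = H g + \tfrac12 q^2 \phi_0 \langle \phi_0, g\rangle,
\]
using $P_c = I - |\phi_0\rangle\langle\phi_0|$ and $H\phi_0 = -\tfrac12 q^2 \phi_0$. For $g \in D(H)$, the domain condition $g'(0^+) - g'(0^-) = 2qg(0)$ means that the classical second derivative differs from the distributional one by $-2qg(0)\delta_0$, so $Hg = -\tfrac12 \partial_x^2 g + qg(0)\delta_0$ as a distribution. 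Pairing with $f \in H^1$ yields
\[
\langle f, HP_c g\rangle = \langle f, -\tfrac12 \partial_x^2 g\rangle + q\bar f(0) g(0) + \tfrac12 q^2 \langle f, \phi_0\rangle\langle\phi_0, g\rangle,
\]
identifying $B(f,g)$ with the two rank-one terms on the right.

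The next step is to recast $B(f,g)$ in the claimed Fourier-multiplier form. The key observation is that $\phi_0(x) = |q|^{1/2}e^{q|x|}$ has Fourier transform proportional to $(q^2+\mu^2)^{-1} = m(\mu)\overline{m(\mu)}$; equivalently, a direct Fourier inversion reveals $\phi_0 = -|q|^{1/2}[(q-\partial_x)^{-1} + (q+\partial_x)^{-1}]\delta_0$. Using the adjoint relation $[(q-\partial_x)^{-1}]^{*} = (q+\partial_x)^{-1}$ (as these are complex-conjugate Fourier multipliers), both $\bar f(0)g(0)$ and $\langle f,\phi_0\rangle\langle\phi_0, g\rangle$ can be rewritten in terms of $(q\pm\partial_x)^{-1}$ acting on $f$ and $g$. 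Integration by parts, combined with partial-fraction identities such as $\mu^2 m(\mu) = q + i\mu - q^2 m(\mu)$, should then reshape these expressions into the stated sum of terms $\langle m(\partial_x)\partial_x f, \partial_x g\rangle$.

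For \eqref{equiv}, specializing to $f = P_c f$ eliminates the projection piece of $B$ since $\langle f,\phi_0\rangle = 0$, leaving $B(f,f) = q|f(0)|^2 = -|q|\,|f(0)|^2 \leq 0$. Thus
\[
\|\sqrt{H} f\|_{L^2}^2 = \tfrac12 \|\partial_x f\|_{L^2}^2 + B(f,f) \leq \tfrac12 \|\partial_x f\|_{L^2}^2,
\]
which gives the first inequality. For the converse, Gagliardo--Nirenberg yields $|f(0)|^2 \lesssim \|f\|_{L^2}\|\partial_x f\|_{L^2}$, and Young's inequality applied to $\tfrac12 \|\partial_x f\|_{L^2}^2 = \|\sqrt{H}f\|_{L^2}^2 + |q|\,|f(0)|^2$ absorbs an $\varepsilon\|\partial_x f\|_{L^2}^2$ term on the right into the left, producing $\|\partial_x f\|_{L^2} \lesssim \|\sqrt{H}f\|_{L^2} + \|f\|_{L^2}$.

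The main obstacle is the algebraic rewriting in the second step: $B$ is naturally a rank-two form (a tensor product in $(f,g)$), while each Fourier-multiplier term $\langle m(\partial_x)\partial_x f, \partial_x g\rangle$ is diagonal in Fourier. Matching the two requires exploiting the delicate factorization $\hat\phi_0 \propto m\bar m$ and the cancellations it induces when combined with the boundary product $\bar f(0)g(0)$, which is a special feature of the delta potential. Once that representation is in hand, the norm equivalence \eqref{equiv} falls out directly from the quadratic-form comparison and a standard Sobolev embedding.
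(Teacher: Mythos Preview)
Your derivation of \eqref{equiv} is correct and arguably cleaner than the paper's: once you know $B(f,f)=q|f(0)|^{2}$ for $f=P_cf$, the first inequality is immediate from $q<0$, and the second follows from Gagliardo--Nirenberg plus absorption, exactly as you outline. The paper instead deduces \eqref{equiv} from the multiplier form of $B$ and the $L^2$-boundedness of $m(\partial_x)$ and $m(\partial_x)\partial_x$, which is less direct.

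The genuine gap is in \eqref{compare-ip}. Your own diagnosis is fatal to your approach: the two pieces $q\bar f(0)g(0)$ and $\tfrac12 q^{2}\langle f,\phi_0\rangle\langle\phi_0,g\rangle$ are rank-one bilinear forms (tensor products in $(f,g)$), while any term of the literal shape $\langle m(\partial_x)\partial_x f,\partial_x g\rangle$ is diagonal in Fourier. No finite linear combination of diagonal forms can reproduce a nontrivial rank-one form, so the partial-fraction manipulations you sketch cannot close this. The paper sidesteps the issue by starting from the spectral resolution $\langle f,HP_c g\rangle=\int_0^\infty \lambda\,\langle f,E(\lambda)g\rangle\,d\lambda$ and inserting the explicit resolvent kernel. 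After the change of variables $\lambda=\mu^2/2$, the potential contribution becomes
\[
\int \frac{\mu^{2}}{2(q-i\mu)}\,\tilde\F f(\mu)\,\tilde\F g(\mu)\,d\mu,\qquad \tilde\F h(\mu):=\int e^{i|x|\mu}h(x)\,dx.
\]
The key point is that $\tilde\F h$ is a sum of ordinary Fourier transforms of $h$ and its reflection $Rh(x)=h(-x)$; unfolding then exhibits $B(f,g)$ as a linear combination of terms $\langle m(\partial_x)\partial_x f',\partial_x g'\rangle$ with $f'\in\{f,Rf\}$, $g'\in\{g,Rg\}$. The reflections are precisely what resolve the rank mismatch you identified, and they are harmless for the later applications (the $L^2$ and $L_x^pL_t^q$ boundedness of $m(\partial_x)$ used in Lemma~\ref{L:bs-H1}). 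Your route via the domain description of $H$ does not produce these reflections and therefore cannot yield the multiplier representation as stated.
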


Although (\ref{equiv}) has already been shown in \cite[Section VIII,D]{DMW} via the $W^{1,p}$ boundedness of wave operators for $H$, we give a simpler proof  of (\ref{equiv}) by using the explicit representation of $\sqrt{H}$. 

\begin{proof}[Proof of Lemma~\ref{L:equiv}] 
By the spectral theorem and the explicit form of the resolvent, we have the identity
\[
\langle f,HP_c g\rangle = \langle f,-\tfrac12\partial_x^2g\rangle + B(f,g),
\]
where $B(f,g)$ is a linear combination of terms like
\begin{align*}
\iiint \tfrac{\lambda}{\sqrt{\lambda}(q-i\sqrt{\lambda})}e^{i(|x|+|y|)\sqrt{\lambda}} f(x)g(y)\,dx\,dy\,d\lambda = \int \tfrac{\mu^2}{2(q-i\mu)} \tilde\F f(\mu) \tilde \F g(\mu)\,d\mu.
\end{align*}
Here we use the notation
\[
\tilde\F f(\mu)=\int e^{i|x|\mu}f(x)\,dx.
\]
This is consistent with the usage above, and in fact in this case $\tilde\F f$ can be written exactly as the sum of Fourier transforms of $f$ and its reflection.  Thus \eqref{compare-ip} follows from Plancherel.

We turn to \eqref{equiv}. For the first estimate we simply observe that $m(\partial_x)$ maps $L^2\to L^2$ boundedly.  For the second estimate, we observe in fact that $m(\partial_x)\partial_x$ maps $L^2\to L^2$ boundedly, and hence by Young's inequality 
\begin{align*}
\| \partial_x f\|_{L^2}^2 & \lesssim  \|\sqrt{H} f\|_{L^2}^2 + \|f\|_{L^2}\|\partial_x f\|_{L^2} \\
& \lesssim  \|\sqrt{H} f\|_{L^2}^2 + \eps\|\partial_x f\|_{L^2}^2 + \eps^{-1}\|f\|_{L^2}^2
\end{align*}
for any $\eps>0$.  Choosing $\eps\ll 1$ implies the desired bound. 
\end{proof}

\begin{remark}\label{R:L1} The multiplier $m(\partial_x)$ appearing in \eqref{compare-ip} actually maps $L^r\to L^r$ boundedly for any $1\leq r\leq\infty$. Indeed, it was already proven in \eqref{L1} that $\F^{-1}m\in L^1$, and hence this is a consequence of Young's inequality.  In particular, we are not using any multiplier theorems and are able to access the $L^1$, $L^\infty$ endpoints. In a similar way, we see that $m(\partial_x)$ is bounded on 
$L_x^pL_t^q$ for all $1\le p,q\le\infty$. Those will be useful in the proof of Lemma~\ref{L:bs-H1} below. 
\end{remark}

\subsection{Existence of small solitary waves}\label{S:existence}  In this section we discuss the existence and properties of solutions to \eqref{elliptic}.

In \cite{FOO}, the authors considered \eqref{nls} with a focusing nonlinearity and provided an explicit formula for the family of nonlinear bound states.  Using our notation, these solutions are given by
\[
 Q(x) =\left(\tfrac{(p+2)|E|}{2|\mu|}\right)^{\frac{1}{p}} \cosh^{-\frac2p}\left(p\sqrt{\tfrac{|E|}{2}}|x|+\arctanh \left(\tfrac{|q|}{\sqrt{2|E|}}\right)\right),
\]
where $E<-\tfrac12 q^{2}$ and $\mu<0$. This formula is obtained by solving the relevant ODE on each side of $x=0$ and then gluing them together at $x=0$ to impose the jump condition $Q'(0+)-Q'(0-)=2qQ(0)$.  This approach also works in the defocusing case $\mu>0$; the resulting formula is
\[
Q(x) =\left(\tfrac{(p+2)|E|}{2\mu}\right)^{\frac{1}{p}} \sinh^{-\frac2p}
\left(p\sqrt{\tfrac{|E|}{2}}|x|+\arctanh \left(\tfrac{\sqrt{2|E|}}{|q|}\right)\right)
\]
for $-\tfrac12 q^2<E<0$.  When $E=0$, one has the solution 
\[
Q(x) =\left(\tfrac{(p+2)|q|^2}{\mu(p|q||x|+2)^2}\right)^{\frac{1}{p}},
\]
which belongs to $L^2$ provided $p<4$.

From the explicit formulas for $Q$, one can observe that as $E$ approaches $-\tfrac12 q^2$, the functions $Q$ behave like a small multiple of the linear eigenfunction. It will be convenient to describe this behavior in Proposition~\ref{P:Q} below.  In particular, we find it convenient to follow the approach of \cite{GNT} and parametrize the family of ground states by small $z\in \C$. 

In the following, we write $D_jQ[z] = \tfrac{\partial}{\partial z_j}Q[z]$, where we identify $z\in\C$ with the real vector $(z_1,z_2)$.  We write $DQ[z]$ for the Jacobian $DQ[z]:\C\to\C$ with 
\[
DQ[z]w = D_1Q[z]\Re w+ D_2Q[z]\Im w\qtq{for}w\in\C.
\]

We will prove the following. 
\begin{proposition}\label{P:Q} There exists small enough $\delta>0$ such that for $z\in\C$ with $|z|<\delta$, we have the following.
\begin{itemize}
\item There exists a unique solution $Q=Q[z]$ to \eqref{elliptic} with $E=E[|z|]\in\R$. 
\item We may write $Q[z]=z\phi_0+ h$, where
\[
\|h\|_{H^{1,k}(\R)\cap H^2(\R\backslash\{0\})} \lesssim |z|^{2},\quad \|Dh\|_{H^{1,k}} \lesssim |z|,\qtq{and} \|D^2h\|_{L^2} \lesssim 1
\]
for any $k\geq 0$. 
\item $E[|z|]=-\tfrac12 q^2+\mathcal{O}(z)$.
\item $Q[ze^{i\theta}]=Q[z]e^{i\theta}$ and $Q[|z|]$ is real-valued.
\end{itemize}
\end{proposition}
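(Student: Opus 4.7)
My plan is to exploit the gauge symmetry of \eqref{elliptic} to reduce to real $z > 0$, and then solve by a Lyapunov--Schmidt reduction around the trivial solution at $z = 0$. Since \eqref{elliptic} is phase-covariant ($Q \mapsto e^{i\theta}Q$ sends solutions to solutions with the same real $E$), it will suffice to construct a real-valued $Q[r]$ for small $r > 0$ and then define $Q[re^{i\theta}] := e^{i\theta}Q[r]$; this immediately gives the last bullet and reduces $E$ to a function of $|z|$.

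Concretely, I write $Q = z\phi_0 + h$ with $h \in P_c L^2$ and $E = -\tfrac12 q^2 + e$. Substituting into \eqref{elliptic} and using $H\phi_0 = -\tfrac12 q^2 \phi_0$ yields
\[
(H + \tfrac12 q^2)h - eh = -\mu|Q|^p Q + ez\phi_0,
\]
which, after projecting onto $P_c L^2$ and onto $\phi_0$, decouples into the coupled system
\[
(H + \tfrac12 q^2 - e)\,h = -\mu P_c[|Q|^p Q], \qquad ez = \mu\langle \phi_0, |Q|^p Q\rangle.
\]
For $|e| < \tfrac14 q^2$, the operator on the left of the first equation is bounded below by $\tfrac14 q^2$ on $P_c L^2$, hence invertible; combined with Lemma~\ref{L:equiv}, its inverse maps $L^2 \to D(H) \cap P_c L^2$ boundedly. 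I then view this system as a fixed-point problem on a ball
\[
\{(h,e) : \|h\|_{H^{1,k}(\R) \cap H^2(\R\backslash\{0\})} \leq C|z|^{p+1},\ |e| \leq C|z|^{p}\}.
\]
Since $p$ is an even integer, $|w|^p w$ is polynomial in $(w,\bar w)$ and hence smooth; since $\phi_0$ decays exponentially, the nonlinearity $|z\phi_0 + h|^p (z\phi_0 + h)$ is exponentially localized with $L^2$ norm of order $|z|^{p+1}$ on the ball. The shifted resolvent $(H + \tfrac12 q^2 - e)^{-1}$ preserves weighted Sobolev spaces when applied to such localized data because its Green's function decays exponentially in $|x-y|$. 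A routine check shows the resulting map is a contraction, yielding a unique fixed point $(h, e)$ with $\|h\|_{H^{1,k} \cap H^2(\R\backslash\{0\})} \lesssim |z|^{p+1}$ and $|e| \lesssim |z|^p$; since $p \geq 4$, these are far stronger than the stated bounds $\|h\| \lesssim |z|^2$ and $E = -\tfrac12 q^2 + \mathcal{O}(z)$. The jump condition at $x=0$ is automatic from $Q \in D(H)$.

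The derivative bounds follow by differentiating the fixed-point equation: $D_j h$ solves the linear equation
\[
(H + \tfrac12 q^2 - e) D_j h = -\mu D_j P_c[|Q|^p Q] + (D_j e)\,h,
\]
whose right-hand side is $\mathcal{O}(|z|^p)$ (with analogous bounds for $D_j e$); differentiating once more yields $D^2 h$ with right-hand side $\mathcal{O}(|z|^{p-1})$. Inverting gives $\|Dh\|_{H^{1,k}} \lesssim |z|^p \leq |z|$ and $\|D^2 h\|_{L^2} \lesssim |z|^{p-1} \lesssim 1$. Uniqueness in the ball together with the gauge covariance then extends the construction to all small $z \in \C$. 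The main technical point is verifying that the shifted resolvent maps exponentially localized data into $H^{1,k}(\R) \cap H^2(\R\backslash\{0\})$ for arbitrary $k \geq 0$ while preserving the jump condition at $x = 0$; this is handled using the explicit form of the resolvent of $H + \tfrac12 q^2 - e$ (analogous to the formulas in Section~\ref{S:strichartz}, but around a spectral point at distance $\gtrsim q^2$ from the spectrum, so that only exponentially decaying kernels appear).
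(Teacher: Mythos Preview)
Your proposal is correct and follows essentially the same Lyapunov--Schmidt reduction as the paper: write $Q=z\phi_0+h$ with $h\in P_cL^2$, project \eqref{elliptic} onto and away from $\phi_0$, and solve the resulting coupled system for $(h,e)$ by a contraction argument, then differentiate the fixed-point equations to obtain the bounds on $Dh$ and $D^2h$. The only cosmetic differences are that the paper keeps the term $eh$ on the right-hand side and inverts the fixed operator $(H+\tfrac12 q^2)^{-1}P_c$ (whose mapping properties are recorded in Lemma~\ref{L:exist}), whereas you absorb $e$ into the linear operator and invert $(H+\tfrac12 q^2 - e)^{-1}$ on $P_cL^2$; and the paper deduces the gauge covariance $Q[ze^{i\theta}]=e^{i\theta}Q[z]$ a posteriori from uniqueness, whereas you impose it a priori and construct only for real $z$.
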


\begin{remark} In fact, the proof will show that $h=\mathcal{O}(z^{p+1})$ and $E[|z|]=-\tfrac12 q^2+\mathcal{O}(z^p)$, but we will not need this refinement in what follows.  Similarly, we can control $Dh$ and $D^2 h$ in the same norms as $h$, but we will not need this. 
\end{remark}

\begin{remark} Using gauge invariance (i.e. differentiating the identity $Q[ze^{i\theta}]=Q[z]e^{i\theta}$) leads to the useful identity
\begin{equation}\label{DQ-id}
Q[z]=-iDQ[z]iz. 
\end{equation}
\end{remark}

Results similar to Proposition~\ref{P:Q} are proved in \cite{SW, SW2}; we will sketch a proof that follows the presentation given in the appendix of \cite{GNT}.  The key ingredient is the following estimate for the resolvent at the linear eigenvalue.

\begin{lemma}\label{L:exist} For any integer $k\geq 0$, $(H+\tfrac12 q^2)^{-1}P_c$ is bounded from $L^2$ to $H^2(\R\backslash\{0\})$ and from $H^{0,k}$ to $H^{1,k}$. 
\end{lemma}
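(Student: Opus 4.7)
The plan is to establish the two mapping properties separately, exploiting the fact that $-\tfrac12 q^2$ lies strictly below the continuous spectrum $[0,\infty)$ of $H$. For the $L^2\to H^2(\R\setminus\{0\})$ bound, the spectral theorem gives $H+\tfrac12 q^2\geq \tfrac12 q^2$ on $P_cL^2$, so $(H+\tfrac12 q^2)^{-1}P_c$ is bounded on $L^2$ with range in $D(H)\subset H^1$. Writing $g=(H+\tfrac12 q^2)^{-1}P_cf$, the quadratic form identity $\langle g,Hg\rangle=\tfrac12\|g'\|_{L^2}^2+q|g(0)|^2$ together with the 1D trace bound $|g(0)|^2\leq 2\|g\|_{L^2}\|g'\|_{L^2}$ and Young's inequality yields $\|g\|_{H^1}\lesssim\|f\|_{L^2}$; on $\R\setminus\{0\}$ the distributional equation reduces to the classical ODE $-\tfrac12 g''+\tfrac12 q^2 g=P_cf$, from which $\|g''\|_{L^2(\R\setminus\{0\})}\lesssim\|f\|_{L^2}$ is immediate.

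For the weighted bound, I would work with the integral kernel $G(x,y)$ of $(H+\tfrac12 q^2)^{-1}P_c$ directly. Analytically continuing the formula for $R(\lambda+i0)$ from Section~\ref{S:strichartz} to $\lambda<0$ by setting $\sqrt{\lambda}=i\kappa$ with $\kappa>0$ turns the resolvent kernel into an expression of the form
\[
\tfrac{1}{2\kappa}\bigl[e^{-\kappa|x-y|}-\tfrac{q}{q+\kappa}e^{-\kappa(|x|+|y|)}\bigr],
\]
and the factor $(q+\kappa)^{-1}$ produces a simple pole at the value $\kappa_0$ corresponding to the eigenvalue $-\tfrac12 q^2$. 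Projecting by $P_c$ amounts to subtracting the residue $\phi_0(x)\phi_0(y)$ and evaluating the regular part at the eigenvalue; the resulting $G(x,y)$ is a finite sum of terms of the form $Ce^{-c|x-y|}$ and $Ce^{-c(|x|+|y|)}$ for some fixed $c>0$, with an analogous pointwise bound holding for $\partial_x G(x,y)$.

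Given this exponential pointwise decay, the weighted estimates follow from Schur's test. From $\langle x\rangle\lesssim\langle x-y\rangle\langle y\rangle$ one obtains $\langle x\rangle^k\langle y\rangle^{-k}e^{-c|x-y|}\lesssim\langle x-y\rangle^k e^{-c|x-y|}$, which is integrable uniformly in $x$ and (symmetrically) in $y$, yielding $L^2(\langle x\rangle^{2k}dx)\to L^2(\langle x\rangle^{2k}dx)$ boundedness for both $G$ and $\partial_x G$ and hence the full $H^{0,k}\to H^{1,k}$ estimate. The main technical step is the explicit identification of $G(x,y)$ as the regular part of the resolvent at the eigenvalue, but this is a direct residue computation given the closed-form kernels already available from Section~\ref{S:strichartz}.
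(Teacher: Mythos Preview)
Your proposal is correct and, for the weighted $H^{0,k}\to H^{1,k}$ estimate, essentially identical to the paper's: both compute the kernel of $(H+\tfrac12 q^2)^{-1}P_c$ as a sum of exponentially decaying pieces of the form $e^{q|x-y|}$ and $e^{q(|x|+|y|)}$, and both close via Schur's test using $\langle x\rangle^k\langle y\rangle^{-k}\lesssim\langle x-y\rangle^k$. One small imprecision: the regular part at the eigenvalue actually carries a linear prefactor $(|x|+|y|)$ on the $e^{q(|x|+|y|)}$ term (the residue subtraction forces a $\partial_\kappa$ of the exponential), so the kernel is not a \emph{pure} sum of those two exponential types. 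This is harmless---polynomial prefactors are absorbed by the exponential decay and Schur's test goes through unchanged---but you should state the bound as $|G(x,y)|+|\partial_x G(x,y)|\lesssim e^{-c|x-y|}+\langle |x|+|y|\rangle e^{-c(|x|+|y|)}$ rather than claim equality with constant-coefficient exponentials.

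For the unweighted $L^2\to H^2(\R\setminus\{0\})$ bound you take a genuinely different (though equally short) route: the paper reads this off directly from the explicit kernel, whereas you use the spectral gap to get $L^2$ boundedness, the quadratic form plus the one-dimensional trace inequality to get $H^1$, and then the classical ODE $-\tfrac12 g''+\tfrac12 q^2 g=P_cf$ on each half-line to recover the second derivative. Your argument has the virtue of not needing the kernel at all for this step, while the paper's kernel computation is doing double duty for both estimates; either way this part is routine.
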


\begin{proof} Evaluating the resolvent at $-\tfrac12 q^2$, we see that the integral kernel of $(H+\tfrac12 q^2)^{-1}P_c$ is a linear combination of terms of the form
\[
e^{|x-y|q}\qtq{and} e^{q(|x|+|y|)}. 
\]
Terms of the second type are straightforward to handle; one needs only observe that 
\[
\biggl| \int e^{q|y|}f(y)\,dy \biggr| \lesssim \|f\|_{L^2}
\]
and that $e^{q|x|}\in H^2(\R\backslash\{0\})\cap H^{1,k}$ for any $k$. It remains to verify that convolution with $e^{q|x|}$ sends $L^2$ to $H^2(\R\backslash\{0\})$ and $H^{0,k}$ to $H^{1,k}$ for any $k$.  Mapping to $H^2(\R\backslash\{0\})$ is clear, so let us consider a weighted norm. As the derivative of $e^{q|x|}$ still decays exponentially, it is enough to work with $H^{0,k}$. The desired estimate therefore reduces to the fact that the operator with kernel $\langle x\rangle^k e^{q|x-y|}\langle y\rangle^{-k}$ maps $L^2\to L^2$ for any $k$ (a consequence of Schur's test, for example).  This completes the proof.  \end{proof}

With Lemma~\ref{L:exist} in place, we turn to the proof of Proposition~\ref{P:Q}. 

\begin{proof}[Proof of Proposition~\ref{P:Q}] We wish to solve 
\[
(H-E)Q+F(Q)=0,\qtq{with} Q=z\phi_0+h\qtq{and} E=-\tfrac12 q^2+e
\]
for small enough (nonzero) $z$,  where $h=\mathcal{O}(z^2)$ is orthogonal to $\phi_0$ and $e=\mathcal{O}(z)$ is real.  Expanding the equation and projecting onto and away from $\phi_0$ leads to the following system for $(e,h)$:
\begin{align}
e &= z^{-1} \langle\phi_0, F(z\phi_0+h)\rangle, \label{syst1}\\
h &= (H+q^2/2)^{-1} \{ -P_cF(z\phi_0+h)+eh\},\label{syst2}
\end{align}
where $z$ is to be small.  To solve this system, let us construct $(e,h)$ as a fixed point of the operator
\[
\Phi(e,h)=(\text{RHS}\eqref{syst1},\text{RHS}\eqref{syst2}).
\]
Let us prove that $\Phi$ is a contraction on the set
\[
A=\{(e,h)\in\R\times P_cH^1:|e|\leq |z|,\quad \|h\|_{H^1} \leq |z|^2\},
\]
where $z$ will be chosen sufficiently small.  We will then prove the desired estimates for $h$ and $e$ as \emph{a priori} estimates using \eqref{syst1} and \eqref{syst2}. 

It is straightforward to show that $\Phi:A\to A$; indeed, writing $(e_1,h_1)=\Phi(e_0,h_0)$ for some $(e_0,h_0)\in A$, we can use Lemma~\ref{L:exist} to estimate
\[
|e_1| \lesssim |z|^{-1}\| z^{p+1}\phi_0^{p+1} + h_0^{p+1}\|_{L^2} \lesssim |z|^p \ll |z|
\]
and
\[
\|h_1\|_{H^1}  \lesssim \|z^{p+1}\phi_0^{p+1}+h_0^{p+1}\|_{L^2}+\|e_0h_0\|_{L^2} \lesssim |z|^{p+1}+|z|^3 \ll |z|^2. 
\]
Similarly, writing $(e_1,h_1)=\Phi(e_0,h_0)$ and $(\tilde e_1,\tilde h_1)=\Phi(\tilde e_0,\tilde h_0)$, we can estimate
\begin{align*}
|e_1-\tilde e_1| & \lesssim |z|^{-1}\|(h_0-\tilde h_0)(z^p\phi_0^p+h_0^p+\tilde h_0^p)\|_{L^2} \\
 & \lesssim |z|^{p-1}\|h_0-\tilde h_0\|_{H^1} \ll \| h_0 - \tilde h_0\|_{H^1}
\end{align*}
and
\begin{align*}
\| h_1-\tilde h_1\|_{H^1} &\lesssim |z|^{p}\|h_0-\tilde h_0\|_{H^1}+|z|^2|e_0-\tilde e_0|+|z|\|h_0-\tilde h_0\|_{H^1} \\
& \ll \|h_0-\tilde h_0\|_{H^1}+|e_0-\tilde e_0|. 
\end{align*}
Thus $\Phi$ defines a contraction on $A$ (for $z$ small enough) and hence has a unique fixed point. 

Using uniqueness and gauge invariance of the nonlinearity, we can deduce that $Q[ze^{i\theta}]=e^{i\theta}Q[z]$ and $E=E[|z|]$.  Similarly, by uniqueness we can guarantee that $Q[|z|]$ is real-valued. 

Next, let us estimate $h$ in $H^2(\R\backslash\{0\})$ and $H^{1,k}$. Using \eqref{syst2}, Lemma~\ref{L:exist}, and Sobolev embedding, we first estimate
\begin{align*}
\|h\|_{H^2(\R\backslash\{0\})} & \lesssim  \|F(z\phi_0+h)+eh\|_{L^2} \\
&  \lesssim \|z^{p+1}\phi_0^{p+1}+h^{p+1}\|_{L^2} +|z| \|h\|_{L^2} \\
& \lesssim |z|^{p+1}+\{|z|^{2p}+ |z|\} \|h\|_{L^2},
\end{align*}
which (for small $z$) implies $\|h\|_{H^2(\R\backslash\{0\})}\lesssim |z|^2$.  Similarly, 
\begin{align*}
\|h\|_{H^{1,k}} & \lesssim |z|^{p+1}+\bigl\{|z|^{2p}+|z|\}\|h\|_{H^{0,k}},
\end{align*}
which again implies $\|h\|_{H^{1,k}}\lesssim |z|^2$. 

To prove bounds for $Dh$, we differentiate \eqref{syst1} and \eqref{syst2}. This leads to
\begin{align}
De &= -z^{-2}Dz\langle \phi_0,F(z\phi_0+h)\rangle + z^{-1}\langle \phi_0, D[F(z\phi_0+h)]\rangle, \label{Dsyst1} \\
Dh &= (H+q^2/2)^{-1}\{-P_c D[F(z\phi_0+h)]+[De]h + e[Dh]\}. \label{Dsyst2}
\end{align}
Using \eqref{Dsyst1}, we can readily deduce that $|De|\lesssim 1$.  Feeding this into \eqref{Dsyst2} and estimating as above using Lemma~\ref{L:exist}, we find
\[
\|Dh\|_{H^{1,k}}  \lesssim |z|^p + |z|\|h\|_{H^{0,k}}+ |z|\|Dh\|_{H^{0,k}} \lesssim |z|^p +|z|^3 + |z|\|Dh\|_{H^{0,k}},
\]
which implies
\[
\|Dh\|_{H^{1,k}} \lesssim |z|,
\]
as desired. Differentiating \eqref{Dsyst1} and \eqref{Dsyst2} once more and arguing similarly yields the final estimate, namely,
\[
\|D^2h \|_{L^2}\lesssim 1. 
\]
This completes the proof. \end{proof}

\subsection{Local well-posedness}\label{S:LWP}

In this section we record a local well-posedness result for \eqref{nls}.  Such results have appeared previously in the literature (e.g. in \cite[Proposition 1]{FOO}); we provide a proof here for the sake of completeness. 

\begin{proposition}[Local well-posedness]\label{P:LWP} For any $u_0\in H^1$, there exists a local-in-time solution to \eqref{nls}.  The solution may be extended as long as the $H^1$-norm does not blow up. 
\end{proposition}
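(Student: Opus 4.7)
My plan is to solve the Duhamel equation
\begin{equation*}
u(t)=e^{-itH}u_0 - i\mu\int_0^t e^{-i(t-s)H} F(u(s))\,ds
\end{equation*}
by a standard Picard contraction in $X_T:=C([0,T];H^1(\R))$ with its sup-in-time norm. The choice of a pure $L_t^\infty H_x^1$ space (rather than a Strichartz-type space) is natural in one dimension because $H^1(\R)\hookrightarrow L^\infty(\R)$, so the nonlinearity $F(u)=\mu|u|^p u$ is already locally Lipschitz on $H^1$ and one need not trade regularity for decay; the problem is energy-subcritical in 1D for any $p$.

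Two ingredients drive the argument. First, $e^{-itH}$ is uniformly bounded on $H^1(\R)$: decomposing $u=P_c u+\langle \phi_0,u\rangle\phi_0$, the bound state piece evolves by multiplication by a unimodular phase and preserves every norm, while on $P_c H^1$ the spectral theorem gives $\|\sqrt{H}\,e^{-itH}P_c u\|_{L^2}=\|\sqrt{H}P_c u\|_{L^2}$ together with $L^2$-isometry. Combining with the equivalence in Lemma~\ref{L:equiv} yields $\|e^{-itH}\|_{H^1\to H^1}\leq C_0$ uniformly in $t$. Second, since $p$ is even, $F(u)$ is a polynomial in $u,\bar u$, so Sobolev embedding in one dimension gives
\begin{equation*}
\|F(u)\|_{H^1}\lesssim \|u\|_{L^\infty}^p\|u\|_{H^1}\lesssim \|u\|_{H^1}^{p+1},
\end{equation*}
together with the analogous Lipschitz bound $\|F(u)-F(v)\|_{H^1}\lesssim (\|u\|_{H^1}^p+\|v\|_{H^1}^p)\|u-v\|_{H^1}$.

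With these in hand, set $R=2C_0\|u_0\|_{H^1}$ and choose $T$ small enough that $CTR^p\leq \tfrac12$. The nonlinear map $\Phi(u)(t)=e^{-itH}u_0-i\mu\int_0^t e^{-i(t-s)H}F(u(s))\,ds$ then sends the closed ball $B_R\subset X_T$ into itself and is a strict contraction there; Banach's fixed point theorem produces a unique solution $u\in B_R$, and uniqueness in the full class $C_t H^1$ follows by a Gronwall estimate on $\|u-v\|_{H^1}$. For the blow-up alternative, the local existence time produced above depends only on $\|u_0\|_{H^1}$, so if $[0,T^*)$ were a maximal interval of existence with $\limsup_{t\to T^*}\|u(t)\|_{H^1}<\infty$, one could restart the iteration at $t_0$ close to $T^*$ with initial data $u(t_0)$ and extend the solution past $T^*$, contradicting maximality. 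The only non-standard feature here is the delta potential in $H$, which enters only through the $H^1$-boundedness of $e^{-itH}$, and that is handled cleanly by Lemma~\ref{L:equiv}; I do not anticipate any substantial obstacle.
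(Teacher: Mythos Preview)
Your proof is correct but takes a more direct route than the paper. The paper decomposes $u = P_c u + a(t)\phi_0$ and runs the contraction on the coupled system $(P_c u,a)$, placing $P_c u$ in the Strichartz space $L_t^\infty H_x^1 \cap L_t^4 L_x^\infty$ (via Proposition~\ref{P:Str1} together with \eqref{equiv}) and $a$ in $L_t^\infty$. You instead work with the full $u$ in $C_tH^1$, using only the Sobolev embedding $H^1(\R)\hookrightarrow L^\infty(\R)$ and the uniform $H^1$-boundedness of $e^{-itH}$ that you extract from Lemma~\ref{L:equiv}. Your argument is more elementary and avoids Strichartz estimates entirely; the paper's version has the incidental benefit of producing the $L_t^4 L_x^\infty$ bound for $P_c u$ alongside the existence statement, though that bound is not needed for the local theory itself. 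Both approaches ultimately close the nonlinear estimate in $L_t^1 H_x^1$ using the one-dimensional embedding, and both reduce the blow-up alternative to the fact that the local existence time depends only on $\|u_0\|_{H^1}$.
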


\begin{proof} We will look for $u$ decomposed as follows: 
\[
u(t)=v(t)+a(t)\phi_0:=P_cu(t) + \langle\phi_0,u(t)\rangle\phi_0.
\]  
Equation \eqref{nls} then becomes a coupled system for  $(v(t),a(t))$, namely,
\begin{align}
i\partial_t v(t) & = Hv+P_cF(v(s)+a(s)\phi_0), \nonumber\\
i\partial_t a(t) & =-\tfrac12{q^2} a(t) + \langle\phi_0,F(v(t)+a(t)\phi_0)\rangle. \label{lwp2}
\end{align}
Using an integrating factor in \eqref{lwp2}, we may rewrite these as
\begin{align}
v(t) & = e^{-itH}P_c u_0 -i\int_0^t e^{-i(t-s)H}P_c F(v(s)+a(s)\phi_0)\,ds,  \label{lwp3} \\
a(t)  & = e^{i\frac{1}{2}q^2t}a(0)-i\int_0^t e^{i\frac12 q^2(t-s)}\langle\phi_0, F(v(s)+a(s)\phi_0)\rangle\,ds. \label{lwp4}
\end{align}

Defining $\Phi(v,a)=(\text{RHS}\eqref{lwp3},\text{RHS}\eqref{lwp4})$, we will prove that $\Phi$ defines a contraction on a suitable complete metric space.  Writing $M=\|u_0\|_{H^1}$ and letting $T>0$ to be chosen below, we define
\[
B_{T}=\{(v,a):\|v\|_{(L_t^\infty H_x^1\cap L_t^4 L_x^\infty)([0,T]\times\R)} \leq 2CM,\quad \|a\|_{L_t^\infty([0,T])}\leq 2CM\},
\]
where $C$ encodes constants appearing in Strichartz estimates.  In light of \eqref{equiv}, we can freely exchange $\langle\sqrt{H}\rangle$ and $\langle\partial_x\rangle$ in what follows. 

Writing $(w,b)=\Phi(v,a)$ for some $(v,a)\in B_{T}$, we first use Proposition~\ref{P:Str1} to estimate
\begin{align*}
\lefteqn{\|w\|_{(L_t^\infty H_x^1\cap L_t^4 L_x^\infty)([0,T]\times\R)}}\qquad\\
 & \lesssim 
\|u_0\|_{H^1} + \|F(v+a\phi_0)\|_{L_t^1 H_x^1([0,T]\times\R)} \\
& \lesssim M + T\bigl\{ \|v\|_{L_{t,x}^\infty([0,T]\times\R)}^p 
\|v\|_{L_t^\infty H_x^1([0,T]\times\R)} 
+ \|a\|_{L_t^\infty([0,T])}^{p+1}\|\phi_0\|_{L_x^\infty}^p\|\phi_0\|_{H_x^1}\bigr\}  \\
& \lesssim M + T M^{p+1},
\end{align*}
while
\begin{align*}
\|b\|_{L_t^\infty([0,T])} &
\leq |a(0)|+\|F(v+a\phi_0)\|_{L_t^1 L_x^2([0,T]\times\R)}\|\phi_0\|_{L_x^2} \\
& \lesssim M + TM^{p+1}.
\end{align*}
Thus, for $T=T(M)$ sufficiently small, $\Phi$ maps $B_{T}$ to $B_{T}$.  Similar estimates show that $\Phi$ is a contraction in the norm
\[
d((v,a),(\tilde v, \tilde a)) = \|v-\tilde v\|_{L_t^\infty L_x^2([0,T]\times\R)} 
+ \|a-\tilde a\|_{L_t^\infty([0,T])}
\]
for $T$ sufficiently small.  The result follows. \end{proof}

\section{Setting up the problem}\label{S:setup}

Suppose $u:[0,T]\times\R\to\C$ is a (small) solution to \eqref{nls}.  We will look for a decomposition of $u$ of the form
\begin{equation}\label{E:decompose}
u(t) = Q[z(t)]+v(t).
\end{equation}
We view $z(t)$ as a small unknown to be specified, with $Q$ a solution to \eqref{elliptic} (cf. Proposition~\ref{P:Q}) and $v(t)$ defined through \eqref{E:decompose}. 

Using \eqref{nls}, \eqref{elliptic}, and \eqref{DQ-id}, any such decomposition would lead to an evolution equation for $v$, namely,
\begin{equation}\label{v-eqn1}
\begin{aligned}
i\partial_t v & = Hv + \N, \\
\N& := F(Q+v)-F(Q) -iDQ(\dot z+iEz),
\end{aligned}
\end{equation}
where we have written $Q=Q[z(t)]$, $E=E[|z(t)|]$, and $\dot z$ denotes the time derivative.  We wish to choose $z(t)$ in such a way that the solution to \eqref{v-eqn1} is well-behaved (and such that $z(t)$ remains small).

To choose $z(t)$ and thereby fix the decomposition \eqref{E:decompose}, we will impose the orthogonality conditions
\begin{equation}\label{orthogonal}
\Im\bigl\langle u-Q[z],D_jQ[z]\bigr\rangle = 0 \qtq{for}j\in\{1,2\}
\end{equation}
for all $t\in[0,T]$.  This condition makes $v=u-Q[z]$ orthogonal to the non-decaying solutions to the linearization of \eqref{nls} around $e^{-iEt}Q[z]$ and  agrees with the condition appearing in \cite{GNT}.  We discuss the motivation for this choice in Remark~\ref{R} below. 

The following lemma tells us that as long as the solution $u(t)$ remains small, it is always possible to choose $z(t)$ such that \eqref{orthogonal} holds; moreover, this choice is unique.

\begin{lemma}\label{L:orthogonal} There exists $\delta>0$ small enough such that if $\|u\|_{H^1}\leq \delta$, then there exists unique $z\in\C$ such that \eqref{orthogonal} holds, with $|z|+\|u-Q[z]\|_{H^1} \lesssim \|u\|_{H^1}$. 
\end{lemma}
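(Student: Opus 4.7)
The plan is to apply the implicit function theorem to the map $G\colon \C\times H^1\to \R^2$ defined by
\[
G_j(z,u) := \Im\langle u-Q[z], D_jQ[z]\rangle,\qquad j\in\{1,2\}.
\]
I would first observe that $G(0,0)=0$, since $Q[0]=0$ by Proposition~\ref{P:Q}, so $(z,u)=(0,0)$ is our base point. Smoothness of $G$ in $(z,u)$ follows from the $C^2$-regularity of $z\mapsto Q[z]$ in $H^1$ provided by Proposition~\ref{P:Q} (together with the estimates on $DQ$ and $D^2Q$).

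The key step is to compute the partial Jacobian $\partial_z G(0,0)$ and show it is invertible. Using $Q[z]=z\phi_0 + \mathcal{O}(z^2)$, at $z=0$ we have $D_1Q[0]=\phi_0$ and $D_2 Q[0]=i\phi_0$, both purely real/imaginary multiples of the normalized eigenfunction $\phi_0$. A direct calculation gives
\[
\partial_{z_k}G_j(0,0) = -\Im\langle D_kQ[0],D_jQ[0]\rangle,
\]
which, since $\|\phi_0\|_{L^2}=1$, yields the matrix
\[
\partial_zG(0,0) = \begin{pmatrix} 0 & 1 \\ -1 & 0 \end{pmatrix},
\]
with determinant $1$. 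Thus $\partial_zG(0,0)$ is invertible.

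By the implicit function theorem (on the Banach space $H^1$), there exist neighborhoods $U\subset \C$ of $0$ and $V\subset H^1$ of $0$ and a $C^1$ map $z\colon V\to U$ with $z(0)=0$ such that $G(z(u),u)=0$ for all $u\in V$, and $z(u)$ is the unique solution in $U$. Shrinking $V$ so that $V\subset \{u:\|u\|_{H^1}\leq \delta\}$ for some small $\delta>0$ gives existence and uniqueness of $z$. The Lipschitz bound $|z(u)|\lesssim \|u\|_{H^1}$ follows from $z(0)=0$ together with the $C^1$ bound on the implicit map (whose derivative at $0$ is $-[\partial_zG(0,0)]^{-1}\partial_uG(0,0)$, with $\partial_u G(0,0)$ bounded on $H^1$). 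Finally,
\[
\|u-Q[z]\|_{H^1} \leq \|u\|_{H^1} + \|Q[z]\|_{H^1} \lesssim \|u\|_{H^1} + |z| \lesssim \|u\|_{H^1},
\]
using $Q[z]=z\phi_0+\mathcal{O}(z^2)$ from Proposition~\ref{P:Q}. The only delicate point is the Jacobian computation; everything else is a routine IFT/contraction argument.
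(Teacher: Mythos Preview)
Your argument is correct and essentially matches the paper's proof: both compute the Jacobian of $z\mapsto \Im\langle u-Q[z],D_jQ[z]\rangle$, observe that at leading order it equals the invertible antisymmetric matrix $\begin{pmatrix}0&1\\-1&0\end{pmatrix}$, and then invoke the inverse/implicit function theorem. The only cosmetic difference is that the paper fixes $u$ and applies the inverse function theorem on $\R^2$ starting from the approximate root $z_0=\langle\phi_0,u\rangle$ (showing $f(z_0)=\mathcal{O}(\eps^2)$), whereas you apply the implicit function theorem on $\C\times H^1$ at the base point $(0,0)$; these are equivalent formulations.
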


\begin{proof}[Proof of Lemma~\ref{L:orthogonal}] The proof is the same as \cite[Lemma~2.3]{GNT}.   The idea is that if we were to choose $v=u-\langle \phi_0,u\rangle \phi_0=P_c u$, then we would not be too far off from satisfying \eqref{orthogonal}.  We can therefore use the inverse function theorem to find $z$ exactly satisfying \eqref{orthogonal}.  This is made precise using Proposition~\ref{P:Q}. We sketch the details. 

Denote $\eps=\|u\|_{H^1}$. Define $f:\R^2\to\R^2$ via
\[
f_j(z) = \Im\bigl\langle u-Q[z],D_jQ[z]\bigr\rangle
\]
for $j=1,2$, and set $z_0=\langle \phi_0,u\rangle$.  Note that $|z_0| \leq \eps$.    A computation using the expansion of $Q[z]$ in Proposition~\ref{P:Q} yields
\[
f(z_0)=\mathcal{O}(\eps^2). 
\]
Similarly (using Proposition~\ref{P:Q}), the Jacobian of the map $z\mapsto f(z)$ is computed by
\begin{equation}\label{Jacobian}
D_j f_k(z) = \Im\langle u-Q[z],D_jD_k Q[z]\rangle+\Im\langle D_jQ, D_kQ\rangle= j-k + \mathcal{O}(\eps+|z|). 
\end{equation}
Therefore, by the inverse function theorem, for $\eps$ small enough we may find unique $z$ such that $f(z)=0$.  The result follows.  \end{proof}

Under the (bootstrap) assumption that $\sup_{t\in[0,T]}\|u(t)\|_{H^1}\leq\delta$ for $\delta$ small enough, we can therefore uniquely decompose $u(t)$ in the form \eqref{E:decompose} such that \eqref{orthogonal} holds for each $t\in[0,T]$.

The evolution equation for $v$ is given by \eqref{v-eqn1}.  To derive the evolution equation for $z$, we differentiate the orthogonality conditions \eqref{orthogonal}.  Recalling \eqref{E:decompose}, \eqref{v-eqn1}, and self-adjointness of $H$, this firstly leads to
\begin{align*}
0&=\Im\biggl[ i\langle v,HD_jQ\rangle+i\langle F(Q+v)-F(Q),D_jQ\rangle \\
& \quad -\langle DQ(\dot z+iEz),D_jQ\rangle + \langle v,D_j DQ\dot z\rangle\biggr]. 
\end{align*}
Differentiating \eqref{elliptic} and observing that \eqref{DQ-id} and \eqref{orthogonal} imply $\Im i\langle v,Q\rangle=0$, we may rewrite
\begin{align*}
\Im i\langle v,HD_jQ\rangle  & = \Im \bigl[ i\langle v,ED_jQ\rangle -i\mu\langle v,D_j(|Q|^pQ)\rangle \bigr] \\
& = \Im\langle v,D_jDQ iEz\rangle-\Im i\mu\langle v,D_j(|Q|^p Q)\rangle,
\end{align*}
where we have used \eqref{DQ-id} again in the final line.  Continuing from above, we arrive at the system 
\begin{equation}\label{ode1}
\begin{aligned}
\Im\langle v& ,D_jDQ(\dot z+iEz)\rangle + \Im \langle D_j Q, DQ(\dot z+iEz)\rangle  \\
& \quad = -\Im i\biggl[\langle F(Q+v)-F(Q),D_j Q\rangle - \langle v, D_j(|Q|^p Q)\rangle\biggr]. 
\end{aligned}
\end{equation}
The inner product on the right-hand side of \eqref{ode1} is of the form $\langle G(v,Q),D_jQ\rangle$, where $G$ is at least quadratic in $v$ (see Section~\ref{S:ODE}).  Identifying $\C$ with $\R^2$, we may write this system in the more compact form
\begin{equation}\label{ODE}
A(\dot z+iEz) = b,
\end{equation}
where $A$ is the $2\times 2$ real matrix with entries
\[
A_{jk}= \Im\langle v,D_jD_k Q\rangle + \Im\langle D_jQ, D_kQ\rangle
\]
and $b\in\R^2$ satisfies $b_j=\text{RHS}\eqref{ode1}$.  Note that $A$ coincides with the Jacobian matrix appearing in \eqref{Jacobian}, and hence $A_{jk}=j-k+\mathcal{O}(\delta+|z|)$. 

\subsection{Summary} We have set up the problem as follows: assuming that we have a sufficiently small solution $u$ to \eqref{nls} on a time interval $[0,T]$, we choose $z(t)$ uniquely such that \eqref{orthogonal} holds for each $t$ (using Lemma~\ref{L:orthogonal}).  Defining $v(t)=u(t)-Q[z(t)]$ (where $Q$ is the solution to \eqref{elliptic} as in Proposition~\ref{P:Q}), we find that $v$ and $z$ solve the coupled system \eqref{v-eqn1} and \eqref{ODE}. 

In the next section we will use these equations to prove bounds for $v$ and $z$.  In particular, this will show that $u$ remains small, which implies that the decomposition for $u$ can be continued for all time.  Furthermore, the bounds we obtain will allow us to complete the proof of the main result, Theorem~\ref{T}.

\begin{remark}\label{R}  Let us discuss in some more detail the orthogonality condition \eqref{orthogonal}.  We begin by considering the linearization of \eqref{nls} around a fixed solitary wave $e^{-iEt}Q$. Identifying $v$ with the real vector $v=(\Re v,\Im v)^t$, we can write the linearized equation the form $v_t=Lv$ for an explicit real matrix of operators $L$. Recalling that $Q$ solves \eqref{elliptic} and employing the identity \eqref{DQ-id}, we can connect the functions $D_jQ$ to this linearized equation. In particular (recalling the identification of $\C$ and $\R^2$), one can compute 
\[
L^t iD_jQ = -\tilde E z_j[z_2(iD_1Q) - z_1(iD_2Q)],
\]
where $L^t$ denotes the transpose and we write $D_j E[|z|]= \tilde E z_j$. One therefore finds that that the pair $\{iD_jQ\}$ spans the generalized null space of $L^t$. The orthogonality condition \eqref{orthogonal} is equivalent to the orthogonality of $v$ (identified with the real vector $(\Re v,\Im v)^t$ to $iD_jQ$ (identified with $(-\Im D_j Q, \Re D_jQ)^t)$; here we use the usual inner product for vectors of $\R$-valued functions, i.e. 
\[
(f_1,f_2)^t\cdot (g_1,g_2)^t = \int f_1 g_1 + \int f_2 g_2.
\]

This condition projects $v$ away from the non-decaying solutions to $\partial_t v = Lv$, as we now explain.  We let $\{w_1,w_2\}$ be a basis for the generalized null space of $L^t$ (denoted by $N$) satisfying $L^t w_1=0$ and $L^t w_2=w_1$.  It is not difficult to check that $N^\perp$ is invariant under the flow $\partial_t v = Lv$.  Similarly, for $v(0)\in N$, we can find a solution to $\partial_t v = Lv$ of the form $v(t)=q_1(t) w_1+q_2(t)w_2$.  In fact, explicit computation reveals that $q_1$ and $q_2$ are linear functions in $t$.  Thus, \eqref{orthogonal} exactly projects $v$ away from the non-decaying solutions of $\partial_t v = Lv$, and hence we expect that the component $v$ should decay. 

At a technical level, the key benefit of imposing \eqref{orthogonal} arises in the computation of the ODE \eqref{ode1} for $\dot z+iEz$.  In particular, imposing \eqref{orthogonal} leads to an ODE for $\dot z+ i Ez$ that contains only quadratic and higher terms in $v$.  This is crucial because to describe the asymptotics of $z$ will require that we estimate $\dot z + iEz$ in $L_t^1$, while we can only hope to estimate $v$ in spaces as low as  $L_t^2$ (through reversed Strichartz estimates). 

In contrast, suppose that we were to impose the natural condition 
\begin{equation}\label{other}
\langle v(t),\phi_0\rangle =0,
\end{equation}
so that $v=P_c v$. This type of condition appears in \cite{PW, Weder} and has the advantage of allowing for Strichartz estimates for $e^{-itH}P_c$ to be applied directly to $v$.  In this case, one would find that the ODE for $z$ contains a term that is \emph{linear} in $v$, and hence we would have no hope of estimating in $L_t^1$. 

On the other hand, as $v\neq P_c v$ under the assumption \eqref{orthogonal}, we cannot apply Strichartz estimates for $e^{-itH}$ directly to $v$.  However, if we recall the decomposition $Q[z]=z\phi_0+\mathcal{O}(z^2)$, then we can see that the condition \eqref{orthogonal} implies $\langle v(t),\phi_0\rangle=\mathcal{O}(z^2)$, which suggests that the portion of $v$ parallel to $\phi_0$ should be small compared to $v$.  In fact, in Lemma~\ref{L:vc} we will prove that we can control $v$ by $P_c v$ in all relevant norms, and hence we will be able to utilize the estimates for $e^{-itH}P_c$ after all. 
\end{remark}

\section{Proof of the main result}\label{S:proof}

We suppose $u$ is a solution to \eqref{nls} satisfying 
\begin{equation}\label{bootstrap}
\sup_{t\in[0,T]}\|u(t)\|_{H^1}\leq \delta
\end{equation} 
for $\delta$ sufficiently small, so that we may decompose 
\[
u(t)=Q[z(t)]+v(t),\qtq{where} \Im\langle v(t),D_jQ[z(t)]\rangle\equiv 0 \qtq{for}j\in\{1,2\}
\]
as outlined in the previous section.  By Lemma~\ref{L:orthogonal}, we also have
\[
\sup_{t\in[0,T]}\bigl\{ |z(t)|+\|v(t)\|_{H^1}\bigr\}\lesssim \sup_{t\in[0,T]}\|u(t)\|_{H^1}\lesssim \delta. 
\]

Our goal is to extend these bounds to $[0,\infty)$ and to describe the asymptotics of $z(t)$ and $v(t)$ as $t\to\infty$.  To accomplish this, we will prove a bootstrap estimate using the following norms, which should all be taken over $[0,T]\times\R$ or $[0,T]$.  We first define
\begin{align}
\|v\|_{X}&:=\|v\|_{L_t^\infty H_x^1\cap L_t^4 L_x^\infty} + \|\langle x\rangle^{-\frac32}v\|_{L_x^\infty L_t^2} + \|\partial_x v\|_{L_x^\infty L_t^2}, \label{X} \\
\|z\|_{Y}&:=\|\dot z+iEz\|_{L_t^1 \cap L_t^2}. \label{Y}
\end{align}
Noting that
\[
|z(t)|=\biggl| z(t)\exp\biggl\{i\int_0^t E[z(s)]\,ds\biggr\}\biggr|,
\]
we observe that
\begin{equation}\label{z-infinity}
\|z\|_{L_t^\infty} \leq |z(0)| + \|z\|_Y. 
\end{equation}

As the equation for $v$ involves $Q[z(t)]$, it will be convenient to introduce notation for norms of $Q$ as well.  In particular, we define
\begin{equation}\label{Z}
\|Q\|_Z:= \| \langle x\rangle^{\frac{5}{2}} Q\|_{L_x^1 L_t^\infty\cap L_{t,x}^\infty}+ \|\partial_x Q\|_{L_{t,x}^\infty\cap L_t^\infty L_x^2}
\end{equation}
and
\begin{equation}\label{W}
\|DQ\|_W:=\|\langle x\rangle DQ\|_{L_x^1 L_t^\infty}+\|\langle x\rangle DQ\|_{L_t^\infty L_x^2}+\|\partial_x DQ\|_{L_t^\infty L_x^2}, 
\end{equation}
where $Q=Q[z(t)]$.  Using Proposition~\ref{P:Q}, we can control these norms as long as $z(t)$ remains sufficiently small.

\begin{lemma}\label{L:bs-Q} If $\|z\|_{L_t^\infty}$ is sufficiently small, then
\[
\|Q\|_Z \lesssim \|z\|_{L_t^\infty} \qtq{and} \|DQ\|_W \lesssim 1. 
\]
\end{lemma}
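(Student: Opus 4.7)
The plan is to decompose $Q[z(t)] = z(t)\phi_0 + h$ as in Proposition~\ref{P:Q} and to verify each of the seven norm components in \eqref{Z} and \eqref{W} by handling the two pieces separately. Since $Q[z(t)]$ depends on $t$ only through $z(t)$, every mixed space-time norm in the definitions of $\|Q\|_Z$ and $\|DQ\|_W$ reduces to a purely spatial norm that can be controlled uniformly in $t$ by $\|z\|_{L_t^\infty}$. In particular, all of the $L_t^\infty$-in-time positions simply replace $|z(t)|$ by $\|z\|_{L_t^\infty}$, and the inequalities $|z(t)|\le \|z\|_{L_t^\infty}$ and (for $\|z\|_{L_t^\infty}$ small) $|z(t)|^2\lesssim \|z\|_{L_t^\infty}$ let me absorb the quadratic-in-$z$ contributions into the desired linear-in-$z$ bound for $\|Q\|_Z$.

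For the linear piece $z(t)\phi_0$, the exponential decay $\phi_0(x)=|q|^{1/2}e^{q|x|}$ makes $\|\langle x\rangle^{N}\phi_0\|_{L_x^1\cap L_x^\infty}$ finite for every $N\ge 0$ and $\|\partial_x\phi_0\|_{L_x^2\cap L_x^\infty}$ finite (the derivative has a jump at the origin but remains bounded and exponentially decaying off of it). Hence $z(t)\phi_0$ contributes at most a constant times $\|z\|_{L_t^\infty}$ to each norm of $\|Q\|_Z$. Because $D_1(z\phi_0)=\phi_0$ and $D_2(z\phi_0)=i\phi_0$ carry no factor of $z$, the same spatial norms of $\phi_0$ give an $O(1)$ contribution to $\|DQ\|_W$.

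For the remainder, I would invoke $\|h\|_{H^{1,k}\cap H^2(\R\backslash\{0\})}\lesssim |z(t)|^2$ and $\|Dh\|_{H^{1,k}}\lesssim |z(t)|$ from Proposition~\ref{P:Q} (with $k$ chosen as large as needed) and use three standard reductions: weighted $L_x^1$ bounds via Cauchy--Schwarz, e.g.\ $\|\langle x\rangle^{5/2} h\|_{L_x^1}\le \|\langle x\rangle^{-1}\|_{L_x^2}\|\langle x\rangle^{7/2}h\|_{L_x^2}\lesssim \|h\|_{H^{0,4}}$; weighted $L_x^\infty$ bounds from the one-dimensional Sobolev embedding $H^{1,k}\hookrightarrow L_x^\infty$ with the weight; and weighted $L_x^2$ and $\dot H^1_x$ bounds read off directly from $H^{1,k}$. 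The same chain applied to $Dh$ and $\partial_x Dh$ gives the $O(|z(t)|)\le O(\|z\|_{L_t^\infty})\lesssim 1$ bound required for $\|DQ\|_W$.

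The one spot that needs a little care is the $L_x^\infty$ piece of $\|\partial_x Q\|_{L_{t,x}^\infty}$ for the remainder $h$: since $\partial_x h$ may be discontinuous at $x=0$, I cannot bound $\|\partial_x h\|_{L_x^\infty}$ via $H^1$, so I instead apply the one-sided Sobolev embedding $H^2(\R_\pm)\hookrightarrow C^1(\overline{\R_\pm})$ on each half-line and use the $H^2(\R\backslash\{0\})$ portion of the estimate in Proposition~\ref{P:Q}. Apart from that detail, the argument is a mechanical catalog of the decomposition $Q=z\phi_0+h$ against each component of $\|\cdot\|_Z$ and $\|\cdot\|_W$; no nontrivial obstacle appears.
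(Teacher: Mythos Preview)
Your proposal is correct and follows essentially the same approach as the paper. The paper packages the weighted $L_x^r$ bounds into a single embedding $\|\langle x\rangle^{\ell}G\|_{L_x^r L_t^\infty}\lesssim \|G\|_{L_t^\infty H_x^{1,k}}$ (for $k>\ell+\tfrac1r$) before invoking the decomposition $Q=z\phi_0+h$ from Proposition~\ref{P:Q}, whereas you decompose first and then spell out each estimate by hand; both routes identify correctly that $\|\partial_x Q\|_{L_{t,x}^\infty}$ is the one term requiring the $H^2(\R\backslash\{0\})$ bound.
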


\begin{proof} We begin with the estimate
\begin{equation}\label{an-embedding}
\| \langle x\rangle^\ell G\|_{L_x^r L_t^\infty} \lesssim \|G\|_{L_t^\infty H_x^{1,k}}\qtq{for any} 1\leq r\leq\infty\qtq{and} k>\ell+\tfrac{1}{r}, 
\end{equation}
which follows from H\"older's inequality and the Sobolev embedding $H^1(\R)\hookrightarrow L^\infty(\R)$. In particular,
\[
\|Q\|_Z \lesssim \|Q[z(t)]\|_{L_t^\infty H_x^{1,k}\cap L_t^\infty H_x^2(\R\backslash\{0\})} \qtq{and}\|DQ\|_W \lesssim \|DQ[z(t)]\|_{L_t^\infty H_x^{1,k}}
\]
for large enough $k$.  Here we only use $H^2(\R\backslash\{0\})$ to control $\partial_xQ$ in $L^\infty$. 

The result now follows from Proposition~\ref{P:Q}; indeed, for $\sup_{t\in[0,T]}|z(t)|$ small enough, we can write 
\[
Q[z(t)]=z(t)\phi_0+h(z(t)),
\]
where $h(z(t))=\mathcal{O}(|z(t)|^2)$ and $Dh(z(t))=\mathcal{O}(|z(t)|)$ in the norms detailed in Proposition~\ref{P:Q}. \end{proof}

\subsection{Estimates for the ODE}\label{S:ODE} We first consider the ODE \eqref{ODE} for $z$, which we recall has the form
\[
A(\dot z+iEz)=b,
\]
with $A_{jk}=j-k+\mathcal{O}(\delta+|z|)$ and
\[
 b_j = -\Im i\biggl[\langle F(Q+v)-F(Q),D_j Q\rangle - \mu\langle v, D_j(|Q|^p Q)\rangle\biggr]
\]
To get the error bound on $A_{jk}$, we use Proposition~\ref{P:Q} (similar to the proof of Lemma~\ref{L:bs-Q}). In particular, $A$ is invertible with uniformly bounded inverse.  

\begin{lemma}\label{L:bs-ODE} The following estimate holds:
\[
\|z\|_Y \lesssim \|DQ\|_W\bigl\{\|v\|_X^2 \|Q\|_Z^{p-1}+\|v\|_X^{p+1}\bigr\}.
\]

\end{lemma}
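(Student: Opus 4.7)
The plan is to exploit the ODE $A(\dot z+iEz)=b$: since \eqref{Jacobian} gives $A_{jk}=j-k+\mathcal{O}(\delta+|z|)$, for $\delta$ small $A$ is invertible with a uniformly bounded inverse, so it suffices to bound $\|b\|_{L_t^1\cap L_t^2}$. The crucial observation, foreshadowed in Remark~\ref{R}, is that despite appearances $b_j$ contains no linear-in-$v$ contribution. Writing $F(Q+v)-F(Q)=L_Q v+N(Q,v)$ with
\[
L_Q v=\tfrac{\mu(p+2)}{2}|Q|^p v+\tfrac{\mu p}{2}|Q|^{p-2}Q^2\bar v
\]
and using $D_j(|Q|^p Q)=\tfrac{p+2}{2}|Q|^p D_jQ+\tfrac{p}{2}|Q|^{p-2}Q^2 D_j\bar Q$ together with the identity $\overline{\bar Q^2 v D_jQ}=Q^2\bar v D_j\bar Q$, a short computation shows that $\langle L_Q v,D_jQ\rangle-\mu\langle v,D_j(|Q|^p Q)\rangle$ is purely imaginary, hence annihilated by $-\Im i$. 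Therefore
\[
b_j=-\Im i\langle N(Q,v),D_jQ\rangle,\qquad |N(Q,v)|\lesssim |Q|^{p-1}|v|^2+|v|^{p+1},
\]
where the pointwise bound on $N$ follows from Taylor's theorem.

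For the $L_t^2$ estimate I would place both copies of $v$ in $L_x^\infty$ pointwise in $t$, getting
\[
\int(|Q|^{p-1}|v|^2+|v|^{p+1})|D_jQ|\,dx\lesssim\bigl(\|Q\|_{L_x^\infty}^{p-1}\|v\|_{L_x^\infty}^2+\|v\|_{L_x^\infty}^{p+1}\bigr)\|D_jQ\|_{L_x^1}.
\]
Here $\|D_jQ\|_{L_t^\infty L_x^1}$ and $\|Q\|_{L_{t,x}^\infty}$ are controlled by $\|DQ\|_W$ and $\|Q\|_Z$ respectively (via the Minkowski inequality $\|\cdot\|_{L_t^\infty L_x^1}\leq \|\cdot\|_{L_x^1 L_t^\infty}$). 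The time norms close via $\|v\|_\infty^2\in L_t^2$ coming from $\|v\|_{L_t^4 L_x^\infty}$, and $\|v\|_\infty^{p+1}=\|v\|_\infty^{p-1}\|v\|_\infty^2\in L_t^2$ after using the Sobolev embedding $H^1(\R)\hookrightarrow L^\infty(\R)$ to absorb $\|v\|_{L_{t,x}^\infty}^{p-1}\lesssim \|v\|_X^{p-1}$.

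For the $L_t^1$ bound the weighted part of $\|v\|_X$ is essential for the $|Q|^{p-1}|v|^2$ term. By Fubini and Hölder in $t$,
\[
\iint|Q|^{p-1}|v|^2|D_jQ|\,dx\,dt\leq\|\langle x\rangle^{-3/2}v\|_{L_x^\infty L_t^2}^2\!\int\!\langle x\rangle^3\|Q(\cdot,x)\|_{L_t^\infty}^{p-1}\|D_jQ(\cdot,x)\|_{L_t^\infty}dx,
\]
and I would bound the $x$-integral by $\|Q\|_{L_{t,x}^\infty}^{p-2}\|\langle x\rangle^{5/2}Q\|_{L_x^1 L_t^\infty}\|\langle x\rangle^{1/2}DQ\|_{L_{t,x}^\infty}\lesssim \|Q\|_Z^{p-1}\|DQ\|_W$, where the weighted $L^\infty$ bound on $DQ$ follows from Sobolev applied to $\langle x\rangle DQ\in L_t^\infty H_x^1$ (controlled by the last two pieces of the $W$-norm). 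For the $|v|^{p+1}$ piece the assumption $p\geq 4$ enters: one factorizes $\|v\|_\infty^{p+1}=\|v\|_\infty^{p-3}\|v\|_\infty^4$, with $\|v\|_{L_{t,x}^\infty}^{p-3}\lesssim\|v\|_X^{p-3}$ and $\|v\|_\infty^4\in L_t^1$ from $\|v\|_{L_t^4 L_x^\infty}^4$. The principal obstacle is verifying the algebraic cancellation that makes $b_j$ quadratic in $v$; once that is in hand, everything downstream is routine weighted Hölder bookkeeping exploiting the spatial decay of $Q$ and $DQ$ encoded in the norms $Z$ and $W$.
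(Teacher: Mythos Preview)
Your proof is correct and follows the paper's approach: both isolate the quadratic-in-$v$ structure of $b_j$ (the paper writes $b_j=-\Im i\langle G(v,Q),D_jQ\rangle$ with $G$ an explicit integral remainder satisfying $|G|=\mathcal{O}(v^2Q^{p-1}+v^{p+1})$, you verify the same cancellation via the linearization $L_Qv$) and then close with essentially the same weighted H\"older bookkeeping. The only difference is in the $L_t^1$ bound for $v^2Q^{p-1}$: the paper puts the entire $\langle x\rangle^3$ weight on $Q$ via $\|\langle x\rangle^{3/(p-1)}Q\|_{L_{t,x}^\infty}^{p-1}\|DQ\|_{L_x^1L_t^\infty}$, which sidesteps your weighted sup bound on $DQ$---note that your stated justification (that $\langle x\rangle DQ\in L_t^\infty H_x^1$ from the last two pieces of $W$) is not quite right since $W$ does not control $\|\langle x\rangle\partial_x DQ\|_{L^2}$, though the Agmon-type inequality $\|\langle x\rangle^{1/2}f\|_{L^\infty}^2\lesssim\|\langle x\rangle f\|_{L^2}\|\partial_x f\|_{L^2}$ does hold and suffices.
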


\begin{proof} We examine the right-hand side of the ODE \eqref{ODE} in a more detail.  First, 
\[
D_j(|Q|^p Q) = \tfrac{p}{2}|Q|^{p-2}Q^2 D_j \bar Q + \tfrac{p+2}{2}|Q|^p D_j Q, 
\]
while
\begin{equation}\label{F-diff}
F(Q+v)-F(Q)=\tfrac{p+2}{2}\mu v\int_0^1 |Q+\theta v|^p\,d\theta + \tfrac{p}{2}\mu\bar v\int_0^1 |Q+\theta v|^{p-2}(Q+\theta v)^2\,d\theta. 
\end{equation}
Thus, we may rewrite
\[
b_j=-\Im i\langle G(v,Q),D_jQ\rangle,
\]
where 
\begin{align*}
G(v,Q)& :=\tfrac{p+2}{2}\mu v\int_0^1\bigl[|Q+\theta v|^p-|Q|^p\bigr]\,d\theta 
\\ 
&\quad + \tfrac{p}{2}\mu\bar v\int_0^1 \bigl[|Q+\theta v|^{p-2}(Q+\theta v)^2-|Q|^{p-2}Q^2\bigr]\,d\theta
\end{align*}
In particular,
\begin{equation}\label{Gvqbd}
|G(v,Q)| = \mathcal{O}(v^2 Q^{p-1}+ v^{p+1}). 
\end{equation}

Using the above together with Proposition~\ref{P:Q} and Sobolev embedding, we may now estimate
\begin{equation}\label{ODE-estimate}
 \begin{aligned}
\| \dot z+iEz \|_{L_t^2} & \lesssim \bigl\| \|v\|_{L_x^\infty}^2\|Q\|_{L_x^\infty}^{p-1} + \|v\|_{L_x^\infty}^{p+1} \bigr\|_{L_t^2}\|DQ\|_{L_t^\infty L_x^1}  \\
& \lesssim \|DQ\|_W\bigl\{\|v\|_{L_t^4 L_x^\infty}^2 \|Q\|_{L_{t,x}^\infty}^{p-1} + \|v\|_{L_t^4 L_x^\infty}^2\|v\|_{L_t^\infty H_x^1}^{p-1}\bigr\} \\
& \lesssim   \|DQ\|_W\bigl\{\|v\|_X^2 \|Q\|_Z^{p-1}+\|v\|_X^{p+1}\bigr\},
\end{aligned}
\end{equation}
which is acceptable.  
 We next estimate the $L_t^1$-norm.  Using \eqref{Gvqbd}, we estimate as follows: 
 \begin{align*}
 \| \dot z+iEz\|_{L_t^1}& \lesssim  \int |A^{-1}\Im \langle G(v,Q),DQ\rangle|\,dt  
 \\
 & \lesssim \| G(v,Q) DQ\|_{L_{t,x}^1} \\
 & \lesssim \| \langle x\rangle ^{-\frac32} v\|_{L_x^\infty L_t^2}^2\|\langle x\rangle^{\frac{3}{p-1}}Q\|_{L_{t,x}^\infty}^{p-1}\|DQ\|_{L_x^1 L_t^\infty} \\
 &\quad + \|v\|_{L_t^4 L_x^\infty}^4 \|v\|_{L_{t,x}^\infty}^{p-3} \|DQ\|_{L_t^\infty L_x^1}  \\
 & \lesssim  \|DQ\|_W\bigl\{\|v\|_X^2 \|Q\|_Z^{p-1}+\|v\|_X^{p+1}\bigr\},
 \end{align*}
 which is acceptable.  This completes the proof. \end{proof}
 
 \subsection{Estimates for the PDE} We next consider the PDE \eqref{v-eqn1} for $v$.  

We will prove the following.
\begin{proposition}\label{P:bs-v} The following estimate holds:
\[
\|v\|_X \lesssim \|v(0)\|_{H^1} + \|z\|_Y\|DQ\|_W+\|v\|_X\|Q\|_Z^p + \|v\|_X^{p+1}.
\]
\end{proposition}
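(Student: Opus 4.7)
The plan is to run Duhamel's formula for $v$ through the entire toolkit of linear estimates from Section~\ref{S:strichartz} and to bound each piece of the $X$-norm separately. Since $v$ is not in the continuous subspace, I would first reduce to $P_c v$ by noting that the orthogonality condition \eqref{thm-orthogonal}, combined with $Q[z] = z\phi_0 + \mathcal{O}(z^2)$ from Proposition~\ref{P:Q}, forces $|\langle\phi_0,v\rangle| \lesssim |z|\,\|v\|_{L^2}$; thus $v - P_c v$ is a quadratic correction controlled by $\|z\|_{L_t^\infty}\|v\|_X$ in every norm of $X$ (this is the content the author alludes to in Lemma~\ref{L:vc}), and at the end it can be absorbed into the left-hand side using \eqref{z-infinity} and the smallness of $\delta$. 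So the real task is to estimate
\[
P_c v(t) = e^{-itH}P_c v(0) - i\int_0^t e^{-i(t-s)H}P_c \mathcal{N}(s)\,ds,\qquad \mathcal{N} = F(Q+v)-F(Q) - iDQ(\dot z + iEz).
\]

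Next I would bound each of the four ingredients of $\|v\|_X$ by plugging $P_c v$ into the appropriate linear estimate. The $L_t^\infty H_x^1 \cap L_t^4 L_x^\infty$ piece comes from Proposition~\ref{P:Str1} applied at the $L^2$ level plus Corollary~\ref{C:Str} (weighted $L_{t,x}^2$ on the source) for the Duhamel term, combined with Lemma~\ref{L:equiv} and Remark~\ref{R:L1} to trade $\sqrt{H}$ derivatives for $\partial_x$ derivatives (the multiplier $m(\partial_x)$ is bounded on all mixed Lebesgue spaces, so commuting a derivative past it is harmless). The weighted piece $\|\langle x\rangle^{-3/2}v\|_{L_x^\infty L_t^2}$ comes from Proposition~\ref{P:Str2} on the homogeneous part and from the first estimate in Corollary~\ref{C:Str2} on the Duhamel part; similarly, $\|\partial_x v\|_{L_x^\infty L_t^2}$ comes from the second estimates in Proposition~\ref{P:Str2} and Corollary~\ref{C:Str2}.

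I then split the nonlinearity into three pieces. The driving term $-iDQ(\dot z + iEz)$ contributes $\|z\|_Y\|DQ\|_W$: for instance, in the $\langle x\rangle^{5/2}$-weighted $L_{t,x}^2$ norm required by Corollary~\ref{C:Str}, write it as $\|\dot z + iEz\|_{L_t^2}\|\langle x\rangle^{5/2} DQ\|_{L_t^\infty L_x^2}$ (bounded by $\|z\|_Y\|DQ\|_W$, since Proposition~\ref{P:Q} gives arbitrary polynomial decay of $DQ$); the $L_t^1 L_x^2$ and $L_t^1 H_x^{1/2}$ norms needed by Corollary~\ref{C:Str2} are handled the same way using $\|\dot z + iEz\|_{L_t^1}$. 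Using \eqref{F-diff} I decompose the nonlinear difference as a linear-in-$v$ part of size $O(vQ^p)$ plus a superlinear remainder of size $O(v^{p+1})$. The first gives $\|v\|_X\|Q\|_Z^p$: in the weighted $L_{t,x}^2$ estimate one pairs $\|\langle x\rangle^{-3/2}v\|_{L_x^\infty L_t^2}$ with a weighted $L_x^2 L_t^\infty$ bound on $Q^p$, and the $Z$-norm is designed to absorb exactly these weights. The superlinear piece yields $\|v\|_X^{p+1}$ by Hölder: for instance $\|v^{p+1}\|_{L_t^1 L_x^2} \lesssim \|v\|_{L_t^4 L_x^\infty}^4 \|v\|_{L_t^\infty H_x^1}^{p-3}$ using Sobolev embedding $H^1(\R) \hookrightarrow L^\infty$, which is where $p\geq 4$ enters.

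The main obstacle is the linear-in-$v$ piece $vQ^p$: because this is only linear in $v$, its contribution must land in front of $\|v\|_X$ on the right side, meaning the prefactor must be a genuinely small power of $\|Q\|_Z$ (hence $\|Q\|_Z^p$ and not something like $\|Q\|_Z^{p-1}\|v\|_X$). This forces one to be careful never to spend any regularity or integrability on $v$ that one cannot afford --- in particular, the pointwise spatial decay of $Q$ carried by the $Z$-norm must be routed through the weighted/reversed Strichartz estimates, so that whenever a weight $\langle x\rangle^k$ appears in the linear estimates (Corollary~\ref{C:Str}, Corollary~\ref{C:Str2}) it is paired against $Q$ rather than $v$. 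Once this bookkeeping is carried out in each of the four pieces of the $X$-norm, the asserted inequality follows by summing the contributions.
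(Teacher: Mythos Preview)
Your overall strategy---reducing to $P_cv$ via the orthogonality condition, running Duhamel, and splitting the nonlinearity into the $DQ(\dot z+iEz)$ piece, the linear-in-$v$ piece $vQ^p$, and a superlinear remainder---matches the paper. However, there are two genuine gaps, both caused by the same issue: the linear-in-$v$ piece $vQ^p$ only carries $L_t^2$ time integrability (through $\|\langle x\rangle^{-3/2}v\|_{L_x^\infty L_t^2}$ or $\|\partial_x v\|_{L_x^\infty L_t^2}$), never $L_t^1$.

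First, for the reversed norms $\|\langle x\rangle^{-3/2}P_cv\|_{L_x^\infty L_t^2}$ and $\|\partial_x P_cv\|_{L_x^\infty L_t^2}$ you invoke only Corollary~\ref{C:Str2}, whose source norms are $L_t^1 L_x^2$ and $L_t^1 H_x^{1/2}$. This works for $DQ(\dot z+iEz)$ (since $\dot z+iEz\in L_t^1$) and for $|v|^pv$ (since $\|v\|_{L_t^4L_x^\infty}^4$ supplies $L_t^1$), but it fails for $vQ^p$: with only $L_t^2$ or $L_t^4$ available on $v$ and $L_t^\infty$ on $Q$, you cannot reach $L_t^1$. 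The paper routes $F_1$, $F_2$, and the $DQ$ term through Proposition~\ref{P:Str3}, whose source norm is $\langle x\rangle L_x^1 L_t^2$, and reserves Corollary~\ref{C:Str2} solely for the purely nonlinear $F_3=|v|^pv$.

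Second, and more seriously, your plan to get $L_t^\infty \dot H_x^1$ by ``applying Strichartz and trading $\sqrt{H}$ for $\partial_x$ via Lemma~\ref{L:equiv}'' does not go through. Lemma~\ref{L:equiv} gives an $L^2$ equivalence and an identity at the level of the \emph{inner product} $\langle f,HP_cg\rangle$; it does not let you replace $\sqrt{H}$ by $\partial_x$ inside the weighted norm $\|\langle x\rangle^{5/2}\sqrt{H}\,\N\|_{L_{t,x}^2}$ that Corollary~\ref{C:Str} would demand (weights do not commute with $\sqrt{H}$), and the alternative route via Proposition~\ref{P:Str1} needs $\|\sqrt{H}\,\N\|_{L_t^1L_x^2}\lesssim\|\partial_x\N\|_{L_t^1L_x^2}$, which again fails for $vQ^p$ by the $L_t^1$ obstruction above. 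The paper instead handles $\dot H^1$ by an energy estimate (Lemma~\ref{L:bs-H1}): one differentiates $\|\sqrt{H}P_cv(t)\|_{L^2}^2$ in time, uses the inner-product identity of Lemma~\ref{L:equiv} to rewrite $\langle P_cv,HP_c\,\N\rangle$ as $\langle\partial_xv,\partial_x\N\rangle$ plus the $m(\partial_x)$ correction from Remark~\ref{R:L1}, and then pairs $\partial_xv\in L_x^\infty L_t^2$ against $\partial_x(vQ^p)\in L_x^1 L_t^2$. It is precisely this $L_x^\infty L_t^2\times L_x^1 L_t^2$ duality---not any $L_t^1$ estimate---that closes the $vQ^p$ contribution.
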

 
The plan is to use Strichartz and local smoothing estimates for $e^{-itH}$.  However, we cannot apply these estimates directly to $v$ because the orthogonality conditions \eqref{orthogonal} do \emph{not} imply that $v$ belongs to the continuous spectral subspace of $H$.  Nonetheless, using Proposition~\ref{P:Q} and \eqref{orthogonal}, we can prove that $v$ can be controlled by $P_c v$.
 
\begin{lemma}\label{L:vc}There exists $\delta>0$ small enough that the following holds:  If $\|z\|_{L_t^\infty}\leq\delta$ and $v\in X$ satisfies the orthogonality condition
\begin{equation}\label{orthogonal2}
\Im\langle v(t),D_jQ[z(t)]\rangle\equiv 0 \qtq{for}j\in\{1,2\}
\end{equation}
(where $Q[z]$ is as in Propostion~\ref{P:Q}), then
\[
\|v\|_X\lesssim \|P_c v\|_X.
\]
Here $X$ is as in \eqref{X} and $P_c$ denotes the projection onto the continuous spectral subspace of $H$. 
\end{lemma}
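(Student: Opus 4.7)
The approach is to split $v$ into its projections onto $\phi_0$ and onto the continuous spectral subspace, and then use the orthogonality condition \eqref{orthogonal2} to show the first piece is pointwise dominated by $P_cv$. Concretely, write $v(t)=a(t)\phi_0+P_cv(t)$ with $a(t)=\langle\phi_0,v(t)\rangle\in\C$; since $\|v\|_X\leq\|a\phi_0\|_X+\|P_cv\|_X$, it suffices to prove $\|a\phi_0\|_X\lesssim\|P_cv\|_X$.

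The main step is a pointwise bound $|a(t)|\lesssim\sum_{j=1,2}|\langle P_cv(t),D_jQ[z(t)]\rangle|$. To obtain it, I would expand $Q[z]=z\phi_0+h[z]$ using Proposition~\ref{P:Q}, so that $D_1Q[z]=\phi_0+D_1h$ and $D_2Q[z]=i\phi_0+D_2h$ with $\|D_jh\|_{L^2}\lesssim|z|$. A short computation using $\|\phi_0\|_{L^2}=1$ yields
\[
\Im\langle a\phi_0,D_1Q[z]\rangle=-\Im a+\mathcal{O}(|z|\,|a|),\qquad \Im\langle a\phi_0,D_2Q[z]\rangle=\Re a+\mathcal{O}(|z|\,|a|).
\]
Substituting into \eqref{orthogonal2} turns it into a $2\times 2$ linear system for $(\Re a,\Im a)$ whose leading matrix is the same nonsingular $j-k$ block that appears in the Jacobian computation \eqref{Jacobian}. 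For $\|z\|_{L_t^\infty}$ sufficiently small, this system is uniformly invertible, giving the desired pointwise bound.

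Given the pointwise estimate, the remaining work is routine. Since $\phi_0$, $\partial_x\phi_0$, and $\langle x\rangle^{-3/2}\phi_0$ all lie in $L^\infty$, the four pieces of $\|a\phi_0\|_X$ reduce to $\|a\|_{L_t^\infty}$, $\|a\|_{L_t^4}$, and $\|a\|_{L_t^2}$. Cauchy--Schwarz in $x$ gives
\[
\|a\|_{L_t^\infty}\lesssim \|P_cv\|_{L_t^\infty L_x^2}\|DQ\|_{L_t^\infty L_x^2}\lesssim\|P_cv\|_X,
\]
while pulling the $L_t^2$ norm inside the $x$-integral via Minkowski yields
\[
\|a\|_{L_t^2}\lesssim\|\langle x\rangle^{-3/2}P_cv\|_{L_x^\infty L_t^2}\,\|\langle x\rangle^{3/2}DQ\|_{L_x^1L_t^\infty}\lesssim\|P_cv\|_X,
\]
where the bound on $\langle x\rangle^{3/2}DQ$ follows from the exponential decay of $\phi_0$ together with the $H^{1,k}$ estimate on $Dh$ from Proposition~\ref{P:Q}. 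Interpolation then controls $\|a\|_{L_t^4}$ by $\|a\|_{L_t^\infty}^{1/2}\|a\|_{L_t^2}^{1/2}$.

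The only genuinely delicate point is the algebra extracting the nondegenerate $2\times 2$ block in the middle step: one must be careful with real and imaginary parts under the convention $\langle f,g\rangle=\int\bar fg$, and verify that the leading matrix is $\bigl(\begin{smallmatrix}0&-1\\1&0\end{smallmatrix}\bigr)+\mathcal{O}(|z|)$, which is uniformly invertible for small $|z|$. Everything else is a direct combination of H\"older and Minkowski inequalities together with the decay/regularity estimates for $DQ$ already packaged into Proposition~\ref{P:Q}.
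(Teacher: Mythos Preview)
Your argument is correct and follows essentially the same route as the paper: decompose $v=a\phi_0+P_cv$, use the expansion $D_jQ=\phi_0\cdot(1\text{ or }i)+D_jh$ with $\|D_jh\|\lesssim|z|$ to extract $\Re a$ and $\Im a$ from the orthogonality conditions, and then estimate $\|a\|_{L_t^2\cap L_t^\infty}$ via H\"older/Minkowski against the weighted and unweighted components of the $X$-norm. The only organizational difference is that you move the $a\phi_0$ contribution into the $2\times2$ matrix and bound the right-hand side directly by $\|P_cv\|_X$, whereas the paper leaves the full $v$ on the right and obtains $\|a\phi_0\|_X\lesssim\|z\|_{L_t^\infty}\|v\|_X$, then absorbs; both closures are equivalent.
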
 
 
 \begin{proof} Writing $v=P_c v + \langle\phi_0,v\rangle\phi_0$, we see that it suffices to prove 
 \[
 \|\langle\phi_0,v\rangle\phi_0\|_X\ll \|v\|_X. 
 \]
To this end, we use Proposition~\ref{P:Q} to write $Q[z(t)]=z(t)\phi_0+h(z(t))$, with $h(z)=\mathcal{O}(z^2)$ and $Dh(z)=\mathcal{O}(z)$ in the norms detailed in Proposition~\ref{P:Q}.  As \eqref{orthogonal2} yields
\[
|\langle \phi_0,v(t)\rangle| \lesssim  |\langle Dh,v(t)\rangle|,
\]
we can estimate
\[
\| \langle \phi_0,v\rangle\phi_0\|_X \lesssim \|\langle Dh, v(t)\rangle\|_{L_t^2 \cap L_t^\infty}
\] 
We now claim that
\begin{equation}\label{vcvsv}
\|\langle Dh, v(t)\rangle\|_{L_t^2 \cap L_t^\infty} \lesssim \|z\|_{L_t^\infty} \|v\|_X,
\end{equation}
from which the result follows.  To see this, first note that by the triangle inequality and Minkowski's inequality, we have 
\begin{align*}
\| \langle Dh,v(t)\rangle\|_{L_t^2} &\lesssim \| Dh\,v(t)\|_{L_t^2 L_x^1} \\
& \lesssim \|Dh\,v(t)\|_{L_x^1 L_t^2} \lesssim \|\langle x\rangle^{\frac32} Dh\|_{L_x^1 L_t^\infty}\|\langle x \rangle^{-\frac32} v\|_{L_x^\infty L_t^2}.
\end{align*}
Using \eqref{an-embedding}, we see that this term is acceptable.  Next,
\[
\| \langle Dh,v(t)\rangle\|_{L_t^\infty}  \lesssim \|Dh\|_{L_t^\infty L_x^2}\|v\|_{L_t^\infty L_x^2},
\]
which is acceptable as well.  The result follows.  \end{proof}
 
 Using Lemma~\ref{L:vc}, we see that it suffices to estimate the $X$-norm of $P_cv$.  Applying $P_c$ to \eqref{v-eqn1}, we have
  \[
 i\partial_t P_c v = HP_cv + P_c\,\N,
 \]
 where we recall
 \[
 \N = F(Q+v)-F(Q)-iDQ(\dot z+iEz).
 \]
 In particular, 
 \begin{equation}\label{pcduh}
 P_c v(t) = e^{-itH}P_cv(0) -i\int_0^t e^{-i(t-s)H}P_c\,\N\,ds. 
 \end{equation}

We begin with the linear evolution term. 
 
 \begin{lemma}\label{L:bs-data} The following bound holds:
 \[
 \|e^{-itH}P_c v(0)\|_X \lesssim \|v(0)\|_{H^1}. 
 \]
 \end{lemma}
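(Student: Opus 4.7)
The plan is essentially to quote the linear estimates already established, with Lemma~\ref{L:equiv} used to pass between $H^1$ and the $\sqrt{H}$-adapted norm. I expect no serious obstacle; the only point that requires care is handling the $\dot H^1$ component of the $L_t^\infty H_x^1$ norm, since $\partial_x$ does not commute with $H$.

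First I would dispatch the two local smoothing pieces. Applying \eqref{P:LS1} with $f=v(0)$ immediately gives
\[
\|\langle x\rangle^{-\frac32}e^{-itH}P_c v(0)\|_{L_x^\infty L_t^2} \lesssim \|v(0)\|_{L^2} \lesssim \|v(0)\|_{H^1},
\]
and applying \eqref{P:LS2} with $f=v(0)$ gives
\[
\|\partial_x e^{-itH}P_c v(0)\|_{L_x^\infty L_t^2} \lesssim \|v(0)\|_{H^{\frac12}} \lesssim \|v(0)\|_{H^1}.
\]

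Next I would handle the Strichartz piece $L_t^\infty H_x^1 \cap L_t^4 L_x^\infty$. The $L_t^\infty L_x^2 \cap L_t^4 L_x^\infty$ bound is exactly the homogeneous estimate of Proposition~\ref{P:Str1}, giving a bound by $\|v(0)\|_{L^2}$. For the remaining $L_t^\infty \dot H_x^1$ contribution, I would observe that $\sqrt{H}$ is defined by the functional calculus, so it commutes with $e^{-itH}$ and with $P_c$. Consequently
\[
\sqrt{H}\,e^{-itH}P_c v(0) = e^{-itH} P_c \sqrt{H} v(0),
\]
and Proposition~\ref{P:Str1} combined with the first bound in \eqref{equiv} yields
\[
\|\sqrt{H}\,e^{-itH}P_c v(0)\|_{L_t^\infty L_x^2} \lesssim \|\sqrt{H} v(0)\|_{L^2} \lesssim \|v(0)\|_{\dot H^1}.
\]
Since $e^{-itH}P_c v(0)$ lies in the continuous spectral subspace for each $t$, the second bound in \eqref{equiv} lets me recover
\[
\|e^{-itH}P_c v(0)\|_{L_t^\infty \dot H_x^1} \lesssim \|\sqrt{H}\,e^{-itH}P_c v(0)\|_{L_t^\infty L_x^2} + \|e^{-itH}P_c v(0)\|_{L_t^\infty L_x^2} \lesssim \|v(0)\|_{H^1}.
\]

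Summing the four contributions gives $\|e^{-itH}P_c v(0)\|_X \lesssim \|v(0)\|_{H^1}$, which is the claim. The entire argument is a direct packaging of the preceding linear theory; the only subtlety is the use of $\sqrt{H}$ (rather than $\partial_x$) as the operator that commutes with the propagator, which is precisely why Lemma~\ref{L:equiv} was established.
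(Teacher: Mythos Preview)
Your proof is correct and follows exactly the route the paper indicates: Proposition~\ref{P:Str1}, Proposition~\ref{P:Str2}, and \eqref{equiv}. The only cosmetic slip is writing $\sqrt{H}\,v(0)$ rather than $\sqrt{H}\,P_c v(0)$ (since $\sqrt{H}$ is only defined on the continuous spectral subspace and \eqref{equiv} requires $f=P_cf$), but with that adjustment and the trivial bound $\|P_c v(0)\|_{\dot H^1}\lesssim\|v(0)\|_{H^1}$ the argument goes through verbatim.
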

 
 \begin{proof} Recalling the definition of the $X$-norm in \eqref{X}, we find that the lemma follows from Proposition~\ref{P:Str1}, Proposition~\ref{P:Str2}, and \eqref{equiv}. 
 \end{proof} 
 
 We turn to the Strichartz norms for the inhomogeneous term.
 
 \begin{lemma}\label{L:bs-str} The following bound holds:
 \begin{align*}
 \biggl\|\int_0^t e^{-i(t-s)H}P_c\,\N\,ds\biggr\|_{L_t^\infty L_x^2\cap L_t^4 L_x^\infty}&  \lesssim \|z\|_Y\|DQ\|_W + \|v\|_X \|Q\|_Z^p + \|v\|_X^{p+1}.
 \end{align*}
 \end{lemma}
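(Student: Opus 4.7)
The plan is to decompose $\mathcal{N}$ into its three natural pieces,
\[
\mathcal{N} = -iDQ(\dot z+iEz) + [F(Q+v)-F(Q)],
\]
and to split the difference $F(Q+v)-F(Q)$ further using \eqref{F-diff} into a ``linear-in-$v$'' piece of size $O(vQ^p)$ and a ``fully nonlinear'' piece of size $O(v^{p+1})$. Each of these three contributions is then fed into a different linear estimate from Section~\ref{S:strichartz}.

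For the modulation term $-iDQ(\dot z+iEz)$, I would apply Proposition~\ref{P:Str1} with the admissible pair $(\alpha,\beta)=(1,2)$. Since $\dot z + iEz$ is independent of $x$, Hölder in time gives
\[
\|DQ(\dot z+iEz)\|_{L_t^1 L_x^2} \le \|\dot z+iEz\|_{L_t^1}\,\|DQ\|_{L_t^\infty L_x^2} \lesssim \|z\|_Y\|DQ\|_W,
\]
which accounts for the first term on the right-hand side. For the fully nonlinear remainder $O(v^{p+1})$, I would again use Proposition~\ref{P:Str1} with $(\alpha,\beta)=(1,2)$, distributing $4$ copies of $v$ into $L_t^4 L_x^\infty$ and the remaining $p-3$ copies into $L_t^\infty L_x^2$,
\[
\|v^{p+1}\|_{L_t^1 L_x^2} \le \|v\|_{L_t^4 L_x^\infty}^4\,\|v\|_{L_t^\infty L_x^2}^{p-3} \lesssim \|v\|_X^{p+1};
\]
this is precisely where the assumption $p\geq 4$ enters the estimate.

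The main obstacle is the $O(vQ^p)$ term: the standard Strichartz dual exponents fail because $Q=Q[z(t)]$ has no temporal decay, so one cannot close with $\|Q^p\|_{L_t^{4/3}L_x^2}$ or similar. I would instead use Corollary~\ref{C:Str} and reduce to showing $\|\langle x\rangle^{5/2} vQ^p\|_{L_{t,x}^2}\lesssim \|v\|_X\|Q\|_Z^p$. The key trick is a three-way split of the weight,
\[
\langle x\rangle^{5/2} = \langle x\rangle^{-3/2}\cdot \bigl(\langle x\rangle^{5/2}\bigr)^{p}\cdot \langle x\rangle^{4-5p/2},
\]
combined with Hölder in $x$ and $t$: place $\langle x\rangle^{-3/2}v$ in $L_x^\infty L_t^2$ (controlled by $\|v\|_X$), bound $\langle x\rangle^{5p/2}Q^p = (\langle x\rangle^{5/2}Q)^p$ in $L_{t,x}^\infty$ by $\|Q\|_Z^p$, and absorb the leftover weight in $L_x^2$, which is possible since $p\geq 4$ forces $4-5p/2\leq -6$ and hence $\langle x\rangle^{4-5p/2}\in L_x^2$.

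Putting the three pieces together and invoking the boundedness of $P_c$ on the relevant spaces yields the claimed bound. The main conceptual point is that the orthogonality condition only gives us access to $v$ via $P_c v$ (cf. Lemma~\ref{L:vc}), but that is used upstream; within this lemma, the delicate work is the weighted estimate for the $vQ^p$ contribution, since the other two pieces are handled by rather direct applications of Proposition~\ref{P:Str1}.
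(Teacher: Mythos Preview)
Your approach is correct and in several places more economical than the paper's, but it differs in how the nonlinearity is decomposed and which dual Strichartz exponents are used.

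The paper handles the modulation term via Corollary~\ref{C:Str} (placing $\langle x\rangle^{5/2}DQ(\dot z+iEz)$ in $L^2_{t,x}$ and using $\|\dot z+iEz\|_{L_t^2}$), whereas your use of Proposition~\ref{P:Str1} with $L_t^1 L_x^2$ and $\|\dot z+iEz\|_{L_t^1}$ is more direct and avoids the weighted estimate entirely. For the pure $|v|^p v$ term, the paper uses the dual pair $L_t^{4/3}L_x^1$; your $L_t^1 L_x^2$ choice works equally well, though your H\"older split should read $\|v\|_{L_t^4 L_x^\infty}^4\|v\|_{L_{t,x}^\infty}^{p-4}\|v\|_{L_t^\infty L_x^2}$ (or put the remaining $p-3$ factors in $L_t^\infty L_x^{2(p-3)}$) rather than $\|v\|_{L_t^\infty L_x^2}^{p-3}$, which only closes when $p=4$.

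The more substantive difference is the intermediate terms. The paper writes $F(Q+v)-F(Q)=F_1+F_2+F_3$ with $F_2=\mathcal{O}(v^2Q^{p-1}+\cdots+v^pQ)$ treated separately via the dual pair $L_{t,x}^{6/5}$. You collapse everything into two buckets, $\mathcal{O}(vQ^p)$ and $\mathcal{O}(v^{p+1})$. This is legitimate, but only as a \emph{pointwise} bound coming from \eqref{F-diff}; since you then feed the two buckets into two different linear estimates, you need an actual functional decomposition, not just an inequality for $|F(Q+v)-F(Q)|$. The cleanest fix is to split by the characteristic functions $\chi_{\{|v|\le|Q|\}}$ and $\chi_{\{|v|>|Q|\}}$: on the first set $|F(Q+v)-F(Q)|\lesssim|v||Q|^p$ and your weighted $L^2_{t,x}$ argument via Corollary~\ref{C:Str} applies verbatim; on the second $|F(Q+v)-F(Q)|\lesssim|v|^{p+1}$ and your $L_t^1 L_x^2$ argument applies. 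With that clarification your proof goes through, and it has the advantage of bypassing the separate $F_2$ case entirely.
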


\begin{proof}  Using Corollary~\ref{C:Str} we first estimate
 \begin{align*}
 \biggl\| \int_0^t e^{-i(t-s)H}P_c[DQ(\dot z+iEz)]\,ds\biggr\|_{L_t^\infty L_x^2 \cap L_t^4 L_x^\infty} & \lesssim \|\langle x\rangle^{\frac52}DQ(\dot z+iEz)\|_{L_{t,x}^2} \\
 & \lesssim \| \langle x\rangle^{\frac52}DQ\|_{L_t^\infty L_x^2} \|\dot z+iEz\|_{L_t^2} \\
 & \lesssim \|DQ\|_W \|z\|_Y,
\end{align*}
 which is acceptable.
 
 Next we write nonlinear term in the form
\begin{equation}\label{F-decomp}
\begin{aligned}
&F(Q+v)-F(Q) = F_1 + F_2 + F_3,\qtq{where} \\
&F_1=\mathcal{O}(vQ^p),\quad F_2=\mathcal{O}(v^2 Q^{p-1}+v^{p}Q),\qtq{and} F_3=\mu |v|^p v.
\end{aligned} 
\end{equation}
Such a decomposition is easily achieved under the assumption that $F(u)=\mu|u|^p u$ with $p$ equal to an even integer greater than or equal to four.

The linear term is handled as follows.  Using Corollary~\ref{C:Str}, we have
\begin{align*}
\biggl\| \int_0^t e^{-i(t-s)H}P_c F_1\,ds\biggr\|_{L_t^\infty L_x^2 \cap L_t^4 L_x^\infty} & \lesssim \| \langle x\rangle^{\frac52} Q^p v\|_{L_{t,x}^2} \\
& \lesssim \| \langle x\rangle^{\frac{4}{p}}Q\|_{L_x^{2p} L_t^\infty}^p \|\langle x\rangle^{-\frac32}v\|_{L_x^\infty L_t^2} \\
& \lesssim \|Q\|_Z^p \|v\|_X,
\end{align*}
which is acceptable.

Next, we use Proposition~\ref{P:Str1} to estimate
\begin{align*}
\biggl\| \int_0^t e^{-i(t-s)H} P_c F_2 \,ds\biggr\|_{L_t^\infty L_x^2 \cap L_t^4 L_x^\infty} & \lesssim \|v^2 Q^{p-1}\|_{L_{t,x}^{\frac65}}+ \|v^{p}Q^2\|_{L_{t,x}^{\frac65}}. 
\end{align*}
Using Minkowski's inequality to control $L_x^\infty L_t^4$ by $L_t^4 L_x^\infty$, we firstly estimate
\begin{align*}
\|v^2 Q^{p-1}\|_{L_{t,x}^{\frac65}} & \lesssim \|\langle x\rangle^{-\frac32}v\|_{L_x^\infty L_t^2}^{\frac43} \|v\|_{L_x^\infty L_t^4}^{\frac23} \|\langle x\rangle^{\frac{2}{p-1}}Q\|_{L_x^{\frac{6(p-1)}{5}}L_t^\infty}^{p-1} \\
& \lesssim \|v\|_X^2\|Q\|_Z^{p-1},
\end{align*}
which (after an application of Young's inequality) is acceptable. The other term is treated similarly:
\begin{align*}
\|v^{p}Q\|_{L_{t,x}^{\frac65}}& \lesssim \|\langle x\rangle^{-\frac32}v\|_{L_x^\infty L_t^2}^{\frac43}\|v\|_{L_t^4 L_x^{\infty}}^{\frac23}\|v\|_{L_{t,x}^\infty}^{p-2}\|\langle x\rangle^2 Q\|_{L_x^{\frac65}L_t^\infty} \\
& \lesssim \|v\|_X^{p}\|Q\|_Z,
\end{align*} 
which is again acceptable after applying Young's inequality. 

Finally, the contribution of the $F_3$ term containing only $v$ is estimated as follows:
The purely nonlinear term: use Proposition~\ref{P:Str1}
\begin{align*}
\biggl\| \int_0^t e^{-i(t-s)H}P_c(|v|^p v)\,ds\biggr\|_{L_t^\infty L_x^2 \cap L_t^4 L_x^\infty} & \lesssim \||v|^p v \|_{L_t^{\frac43}L_x^1} \\
& \lesssim \|v\|_{L_t^4 L_x^\infty}^{3}\|v\|_{L_t^\infty L_x^{p-2}}^{p-2} \\
& \lesssim \|v\|_{L_t^4 L_x^\infty}^3 \|v\|_{L_t^\infty H_x^1}^{p-2} \lesssim \|v\|_X^{p+1}, 
\end{align*} 
which is acceptable.  This completes the proof of Lemma~\ref{L:bs-str}.\end{proof}

We next consider the $L_t^\infty \dot H_x^1$ norm of $v$.  We treat this term by an energy estimate.  We will make use of Lemma~\ref{L:equiv}.

\begin{lemma}\label{L:bs-H1} The following estimate holds uniformly over $t\in[0,T]$:
\begin{align*}
\| P_cv(t)\|_{\dot H^1}^2 \leq \|v(0)\|_{\dot H^1}^2 + \|v\|_X\|z\|_Y\|DQ\|_W + \|v\|_X^2\|Q\|_Z^p+\|v\|_X^{p+2},
\end{align*}
where norms are taken over $[0,t]\times\R$. 
\end{lemma}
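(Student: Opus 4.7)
The plan is to carry out an energy estimate at the level of $\langle P_cv, HP_cv\rangle$ and convert to $\dot H^1$ via Lemma~\ref{L:equiv}. Applying $P_c$ to \eqref{v-eqn1} gives $i\partial_tP_cv = HP_cv + P_c\mathcal{N}$, so self-adjointness of $H$ yields
\[
\tfrac{d}{dt}\langle P_cv, HP_cv\rangle = 2\Re\langle -iHP_cv - iP_c\mathcal{N}, HP_cv\rangle = 2\Im\langle \mathcal{N}, HP_cv\rangle,
\]
where I have also used that $P_c$ commutes with $H$ and is self-adjoint. Integrating in time from $0$ to $t$ and invoking \eqref{equiv} (together with the trivial bound $\|P_cv(t)\|_{L^2}\lesssim \|v\|_X$) reduces the lemma to proving
\[
\int_0^t \bigl|\Im\langle\mathcal{N}, HP_cv\rangle\bigr|\,ds \lesssim \|v\|_X\|z\|_Y\|DQ\|_W+\|v\|_X^2\|Q\|_Z^p+\|v\|_X^{p+2}.
\]
Applying \eqref{compare-ip} to the pairing $\langle\mathcal{N}, HP_cv\rangle$ rewrites it as $\tfrac12\langle\partial_x\mathcal{N},\partial_x v\rangle + B(\mathcal{N},v)$ (after an integration by parts in the first piece), so it remains to bound the contributions of $\mathcal{N} = [F(Q+v)-F(Q)] - iDQ(\dot z+iEz)$ to these two bilinear expressions.

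For the modulation term $DQ(\dot z+iEz)$, Cauchy--Schwarz gives
\[
\int_0^t|\langle\partial_x[DQ(\dot z+iEz)],\partial_x v\rangle|\,ds \lesssim \|\partial_x DQ\|_{L_t^\infty L_x^2}\|\dot z+iEz\|_{L_t^1}\|\partial_x v\|_{L_t^\infty L_x^2} \lesssim \|DQ\|_W\|z\|_Y\|v\|_X,
\]
which is the first term on the right-hand side. For the nonlinear piece I use the decomposition \eqref{F-decomp}, $F(Q+v)-F(Q) = F_1+F_2+F_3$ with $F_1 = \mathcal{O}(vQ^p)$, $F_2 = \mathcal{O}(v^2Q^{p-1}+v^pQ)$, and $F_3 = \mu|v|^pv$. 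After an integration by parts the $F_1$ contribution splits into two pieces, $\int\!\!\int Q^p|\partial_xv|^2$ and $\int\!\!\int v(\partial_xQ)Q^{p-1}\partial_xv$. The first piece is controlled by Fubini as
\[
\biggl|\int_0^t\!\!\int Q^p|\partial_xv|^2\,dx\,ds\biggr| \leq \|Q^p\|_{L_x^1 L_t^\infty}\|\partial_xv\|_{L_x^\infty L_t^2}^2 \lesssim \|Q\|_Z^p\|v\|_X^2,
\]
while the second piece is dealt with by the reversed Strichartz pairing $\|\partial_xv\|_{L_x^\infty L_t^2}\|\langle x\rangle^{-3/2}v\|_{L_x^\infty L_t^2}$ against the $L_x^1L_t^\infty$ weight $\|\langle x\rangle^{3/2}(\partial_xQ)Q^{p-1}\|_{L_x^1L_t^\infty}$. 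The $F_2$ contributions are handled in the same spirit, splitting powers of $v$ between the local smoothing norm $\|\langle x\rangle^{-3/2}v\|_{L_x^\infty L_t^2}$, the dispersive norm $\|v\|_{L_t^4L_x^\infty}$, and Sobolev embedding. Finally, the purely nonlinear term $F_3$ contributes $\lesssim \|v\|_X^{p+2}$ by brute force H\"older.

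The $B(\mathcal{N}, v)$ piece is estimated in exactly the same way: by Remark~\ref{R:L1} the multiplier $m(\partial_x)$ is bounded on every mixed Lebesgue space $L_x^pL_t^q$, so each occurrence of $m(\partial_x)\partial_x$ in $B$ can simply be absorbed into the estimate of a derivative, and the resulting bounds have the same form as above. The main obstacle will be bookkeeping in the $F_1$ term, since it is only linear in $v$ and we must be careful to distribute $\partial_x$ so that one derivative ends up paired with the local smoothing norm $\|\partial_xv\|_{L_x^\infty L_t^2}$ rather than producing a factor that blows up in $t$; the weighted decay of $Q$ and $\partial_x Q$ provided by $\|Q\|_Z$ is exactly what allows this to close.
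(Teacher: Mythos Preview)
Your approach is the same as the paper's: an energy identity on $\langle P_cv, HP_cv\rangle$, conversion via Lemma~\ref{L:equiv}, and term-by-term estimates on $\mathcal N$ using the decomposition \eqref{F-decomp}. Your treatments of the modulation term, of $F_1$, and of $F_3$ match the paper's, and your remark about $B(\mathcal N,v)$ via Remark~\ref{R:L1} is exactly how the paper handles the second line of \eqref{l27}.

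The one place where you are too casual is $F_2$, and specifically the subterm linear in $Q$, namely $\mathcal O(v^pQ)$. When $\partial_x$ lands on that lone $Q$, the resulting expression $v^p(\partial_xQ)\,\partial_xv$ \emph{cannot} be handled ``in the same spirit'' as your $F_1$ argument: the reversed pairing $\|\partial_xv\|_{L_x^\infty L_t^2}\|\cdot\|_{L_x^1L_t^2}$ fails because there is no remaining power of $Q$ to supply the spatial weight needed to place a copy of $v$ in $\|\langle x\rangle^{-3/2}v\|_{L_x^\infty L_t^2}$, and $\partial_xQ$ itself is only controlled in $L_{t,x}^\infty\cap L_t^\infty L_x^2$ by $\|Q\|_Z$ (no weighted $L_x^1$). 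The paper isolates this term explicitly by splitting $F_2=F_2^1+F_2^2$ and treats it via the standard-order pairing $\|\partial_xv\|_{L_t^\infty L_x^2}\|v^p\partial_xQ\|_{L_t^1L_x^2}$, using four copies of $\|v\|_{L_t^4L_x^\infty}$ for the time integrability; see \eqref{annoying}. You do list $\|v\|_{L_t^4L_x^\infty}$ among your tools for $F_2$, so the fix is available to you, but your closing paragraph misidentifies the ``main obstacle'' as $F_1$ (which you have already handled cleanly) rather than this particular $F_2$ subterm.
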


\begin{proof} By \eqref{equiv}, we have
\[
\|P_c v(t)\|_{\dot H^1} \lesssim \| \sqrt{H}P_c v(t)\|_{L^2}^2 + \|v(t)\|_{L^2}^2.
\]
As the $L_t^\infty L_x^2$ norm is controlled via Lemma~\ref{L:bs-str}, it suffices to estimate $\sqrt{H}P_c v$. 

To this end, we use the self-adjointness of $H$ and \eqref{v-eqn1} to write
\[
\|\sqrt{H}P_cv(t)\|_{L_x^2}^2 = \|\sqrt{H}P_cv(0)\|_{L^2}^2 + \Im \int_0^t\langle \sqrt{H}P_cv(s),\sqrt{H}P_c \,\N\rangle\,ds,
\]
where
\[
\N = DQ(\dot z+iEz) + F_1+F_2+F_3
\]
as in \eqref{F-decomp}.  In fact, we will split the term $F_2$ (which collects the terms of orders $v^2 Q^{p-1}$ through $v^pQ$) further by writing
\[
F_2 = F_2^1+F_2^2,
\]
where $F_2^1$ collects terms that are linear in $Q$.  We do this so that we can group this term with those appearing in \eqref{energy2} below (rather than \eqref{energy3}).  This is necessary because when the derivative lands on $Q$ we cannot additionally absorb weights in order to produce a $\langle x\rangle^{-\frac32}v$ term in $L_x^\infty L_t^2$; indeed, we only control $\partial_x Q$ in $L_{t,x}^\infty$.  Thus we must put the whole term in $L_t^1 L_x^2$; see \eqref{annoying} below.  

We first observe that by \eqref{equiv}, we have
\[
\|\sqrt{H}P_c v(0)\|_{L^2}^2 \lesssim \|v(0)\|_{\dot H^1}^2,
\]
which is acceptable.

We next use Lemma~\ref{L:equiv} to write
\begin{equation}\label{l27}
\begin{aligned}
\int_0^t \langle \sqrt{H}P_cv(s)\sqrt{H}P_c\N\rangle \,ds  & = \int_0^t \langle \partial_x v(s),\partial_x \N\rangle \,ds \\
&\quad + \int_0^t \langle m(\partial_x)\partial_x v(s),\partial_x \N\rangle \,ds
\end{aligned}
\end{equation}
where $m(\mu)=(q-i\mu)^{-1}$ (up to the addition of similar terms).  We claim that both terms in \eqref{l27} may be controlled by
\begin{align}
&\|\partial_x v\|_{L_x^\infty L_t^2} \| \partial_x(F_1+F_2^2) \|_{L_x^1 L_t^2} \label{energy3} \\
& \quad + \|\partial_x v\|_{L_t^\infty L_x^2} \| \partial_x[DQ(\dot z+iEz)+F_2^1+F_3]\|_{L_t^1 L_x^2}. \label{energy2}
\end{align}
For the first term in \eqref{l27}, this follows directly from H\"older's inequality.  For the second term in \eqref{l27}, we use H\"older's inequality and the fact that $m(\partial_x)$ maps 
$L_{x}^{\infty}L_{t}^{2}\to L_{x}^{\infty}L_{t}^{2}$ 
and $L^2\to L^2$ boundedly (see Remark~\ref{R:L1}). 

We turn to estimating the terms in \eqref{energy2} and \eqref{energy3}.

We begin with \eqref{energy3}.  First, by the chain rule:
\begin{align*}
\| &\partial_x F_1\|_{L_x^1 L_t^2} \\
& \lesssim \|\partial_x v\|_{L_x^\infty L_t^2}\|Q\|_{L_x^p L_t^\infty}^p+\|\langle x\rangle^{-\frac32}v\|_{L_x^\infty L_t^2}\| \langle x\rangle^{\frac{3}{2(p-1)}} Q\|_{L_x^{p-1} L_t^\infty}^{p-1} \|\partial_x Q\|_{L_{t,x}^\infty} \\
& \lesssim \|v\|_X \|Q\|_{Z}^p,
\end{align*}
which is acceptable.

We turn to the intermediate terms in $F_2^2$, which contains terms of the order $v^2Q^{p-1}$ through $v^{p-1}Q^2$.  Applying the chain and product rule and Young's inequality, we are led to estimate four types of terms in $L_x^1 L_t^2$ corresponding to these two extreme cases. When the derivative lands on a copy of $v$, we estimate
\begin{align*}
\|(\partial_x v) v Q^{p-1}\|_{L_x^1 L_t^2} & \lesssim \|\partial_x v\|_{L_x^\infty L_t^2}\|v\|_{L_{t,x}^\infty} \|Q\|_{L_x^{p-1} L_t^\infty}^{p-1}, \\
\|(\partial_x v) v^{p-2} Q^2\|_{L_x^1 L_t^2} &\lesssim \|\partial_x v\|_{L_x^\infty L_t^2} \|v\|_{L_{t,x}^\infty}^{p-1} \|Q\|_{L_x^2 L_t^\infty}^2,
\end{align*}
which are acceptable. When the derivative lands on a copy of $Q$, we instead estimate
\begin{align*}
\|v^2 Q^{p-2}\partial_xQ\|_{L_x^1 L_t^2} & \lesssim \|\langle x\rangle^{-\frac32}v\|_{L_x^\infty L_t^2} \|v\|_{L_{t,x}^\infty} \|\langle x\rangle^{\frac{3}{2(p-2)}}Q\|_{L_x^{p-2} L_t^\infty}^{p-2} \|\partial_x Q\|_{L_{t,x}^\infty}, \\
\|v^{p-1}Q\partial_x Q\|_{L_x^1 L_t^2} & \lesssim \|\langle x\rangle^{-\frac32}v\|_{L_x^\infty L_t^2} \|v\|_{L_{t,x}^\infty}^{p-2} \|\langle x\rangle^{\frac{3}{2}}Q\|_{L_x^1 L_t^\infty} \|\partial_xQ\|_{L_{t,x}^\infty},
\end{align*}
which are acceptable. 

We turn to \eqref{energy2}.  We first have \begin{align*}
\| \partial_x v\|_{L_t^\infty L_x^2} \|\partial_x[DQ(\dot z+iEz)]\|_{L_t^1 L_x^2} & \lesssim \| \partial_x v\|_{L_t^\infty L_x^2} \|\partial_x DQ\|_{L_t^\infty L_x^2} \|\dot z+iEz\|_{L_t^1} \\
& \lesssim \|v\|_X\|z\|_Y \|DQ\|_W,
\end{align*}
which is acceptable. 

Next, we estimate the contribution of $F_2^1$ in \eqref{energy2}, which contains terms that are linear in $Q$.  Distributing the derivative, we are led to estimate the following terms.  First,
\begin{equation}\label{annoying}
\|v^p \partial_x Q\|_{L_t^1 L_x^2} \lesssim \|v\|_{L_t^4 L_x^\infty}^4 \|v\|_{L_{t,x}^\infty}^{p-4} \|\partial_x Q\|_{L_t^\infty L_x^2} \lesssim \|v\|_X^p \|Q\|_Z,
\end{equation}
which is acceptable. Next, 
\begin{align*}
\|v^{p-1}(\partial_x v)Q\|_{L_t^1 L_x^2} & \lesssim \|v\|_{L_t^4 L_x^\infty}^2 \| v^{p-3}(\partial_x v)Q\|_{L_{t,x}^2} \\
& \lesssim \|v\|_{L_t^4 L_x^\infty}^2 \|\partial_x v\|_{L_x^\infty L_t^2} \|v\|_{L_{t,x}^\infty}^{p-3}\|Q\|_{L_x^2 L_t^\infty} \\
& \lesssim \|v\|_X^{p}\|Q\|_Z,
\end{align*}
which is acceptable. 

It remains to estimate the contribution of $F_3$ in \eqref{energy2}.  The purely nonlinear term $F_3=\mu|v|^p v$ is estimated as follows: 
\begin{align*}
\| \partial_x v\|_{L_t^\infty L_x^2}\|\partial_x(|v|^p v)\|_{L_t^1 L_x^2} & \lesssim \|v\|_{L_t^4 L_x^\infty}^4\|\partial_x v\|_{L_t^\infty L_x^2}^2\|v\|_{L_{t,x}^\infty}^{p-4} \lesssim \|v\|_X^{p+2}, 
\end{align*}
which is acceptable.  This completes the proof.
\end{proof}

It remains to estimate the contribution of the inhomogeneous Duhamel term to the $L_x^\infty L_t^2$ components of the $X$-norm (cf. \eqref{X}).  The key ingredients will be Proposition~\ref{P:Str3} and Corollary~\ref{C:Str2}. 

\begin{lemma}\label{L:bs-reverse} The following estimates hold: For $m\in\{\langle x\rangle^{-\frac32},\partial_x\}$, 
\[
\biggl\| m\int_0^t e^{-i(t-s)H}P_c\,\N \,ds \biggr\|_{L_x^\infty L_t^2} \lesssim \|DQ\|_W\|z\|_Y + \|v\|_X \|Q\|_Z^p + \|v\|_X^{p+1}.
\]
\end{lemma}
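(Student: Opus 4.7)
The plan is to split $\N = -iDQ(\dot z + iEz) + F_1 + F_2 + F_3$ according to~\eqref{F-decomp} and to estimate the contribution of each piece to the $L_x^\infty L_t^2$ norm with $m = \langle x\rangle^{-3/2}$ and $m = \partial_x$ separately, using either Proposition~\ref{P:Str3} or Corollary~\ref{C:Str2}.  Roughly, Proposition~\ref{P:Str3} is best suited to the terms $F_1, F_2$ that contain at least one copy of $Q$, whose spatial decay absorbs the $\langle x\rangle$ weight on the right-hand side, while Corollary~\ref{C:Str2} handles the forcing term $DQ(\dot z + iEz)$ and the purely nonlinear term $F_3 = \mu|v|^p v$, for which we need an $L_t^1$-type norm produced either from $\|\dot z + iEz\|_{L_t^1}$ (part of the $Y$-norm) or from H\"older in time applied to $v$.

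For the source term, Corollary~\ref{C:Str2} together with Lemma~\ref{L:bs-Q} yields
\[
\biggl\| m \int_0^t e^{-i(t-s)H} P_c\, DQ(\dot z + iEz)\,ds \biggr\|_{L_x^\infty L_t^2} \lesssim \|DQ\|_{L_t^\infty H_x^1}\|\dot z + iEz\|_{L_t^1} \lesssim \|DQ\|_W \|z\|_Y
\]
for both choices of $m$ (using only $\|DQ\|_{L_t^\infty L_x^2}$ in the weighted case).  For $F_1 = \mathcal{O}(vQ^p)$ and $F_2 = \mathcal{O}(v^2Q^{p-1} + v^p Q)$ I would use Proposition~\ref{P:Str3}, observing that $\langle x\rangle^{-3/2}\lesssim \langle x\rangle^{-1}$ for the weighted version.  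For $F_1$, H\"older in $t$ and $x$ gives
\[
\|\langle x\rangle vQ^p\|_{L_x^1 L_t^2} \lesssim \|\langle x\rangle^{-3/2}v\|_{L_x^\infty L_t^2}\,\|\langle x\rangle^{5/(2p)} Q\|_{L_x^p L_t^\infty}^p \lesssim \|v\|_X \|Q\|_Z^p,
\]
and the estimate for $m = \partial_x$ (where the right-hand side is $\|vQ^p\|_{L_x^1 L_t^2}$) is identical after rebalancing one power of $\langle x\rangle$.  The terms in $F_2$ are treated the same way, placing one copy of $v$ in $L_x^\infty L_t^2$ with the $\langle x\rangle^{-3/2}$ weight and the remaining copies in $L_{t,x}^\infty$ via the Sobolev embedding $\|v\|_{L_{t,x}^\infty}\lesssim \|v\|_X$; this yields $\|v\|_X^2 \|Q\|_Z^{p-1} + \|v\|_X^p \|Q\|_Z$, which is absorbed into $\|v\|_X\|Q\|_Z^p + \|v\|_X^{p+1}$ by Young's inequality.

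For the purely nonlinear term $F_3 = \mu|v|^p v$, Proposition~\ref{P:Str3} is unavailable because $v$ has no spatial decay, so I would apply Corollary~\ref{C:Str2}, bounding $\||v|^p v\|_{L_t^1 L_x^2}$ in the weighted case and $\||v|^p v\|_{L_t^1 H_x^{1/2}} \leq \||v|^p v\|_{L_t^1 H_x^1}$ in the derivative case.  H\"older in time isolates four factors of $v$ in $L_t^4 L_x^\infty$, places the remaining $p-4$ factors in $L_{t,x}^\infty$, and puts the final copy of $v$ (or $\partial_x v$ in the derivative case) into $L_t^\infty L_x^2$; this produces exactly $\|v\|_X^{p+1}$.

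The main obstacle is organizational: for each pairing of $m$ with a piece of $\N$ one has to match H\"older exponents and spatial weights so that every copy of $Q$ is paired with enough $\langle x\rangle$-decay to absorb the weight coming from Proposition~\ref{P:Str3}, while every copy of $v$ is placed into one of the four norms making up the $X$-norm in~\eqref{X}.  The restriction $p \geq 4$ is precisely what permits $F_3$ to be controlled through exactly four copies of $v$ in $L_t^4 L_x^\infty$, together with $L_{t,x}^\infty$ and $L_t^\infty L_x^2$ contributions, without any weighted estimate on $v$ itself.
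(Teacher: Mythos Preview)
Your proof is correct and follows essentially the same approach as the paper: split $\N$ via \eqref{F-decomp}, apply Proposition~\ref{P:Str3} to the terms containing at least one copy of $Q$, and Corollary~\ref{C:Str2} to the purely nonlinear term $F_3$. The only difference is that you handle the source term $DQ(\dot z+iEz)$ via Corollary~\ref{C:Str2} and the $L_t^1$ part of $\|z\|_Y$, whereas the paper uses Proposition~\ref{P:Str3} and the $L_t^2$ part of $\|z\|_Y$ together with $\|\langle x\rangle DQ\|_{L_x^1 L_t^\infty}$; both routes are valid.
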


\begin{proof} We recall that
\[
\N = DQ(\dot z + iEz) + F_1 + F_2 + F_3,
\]
where $F_j$ are as in \eqref{F-decomp}.

We first use Proposition~\ref{P:Str3} to estimate\begin{align*}
\biggl\| m\int_0^t e^{-i(t-s)H}P_c[DQ(\dot z+iEz)]\,ds\biggr\|_{L_x^\infty L_t^2} & \lesssim \| \langle x\rangle DQ(\dot z + iEz)\|_{L_x^1 L_t^2} \\
& \lesssim \| \langle x\rangle DQ\|_{L_x^1 L_t^\infty} \|\dot z+iEz\|_{L_t^2}\\
& \lesssim \|DQ\|_W\|z\|_Y,
\end{align*}
which is acceptable. 

Next, we estimate
\begin{align*}
\biggl\| m\int_0^t e^{-i(t-s)H}P_cF_1\,ds\biggr\|_{L_x^\infty L_t^2} & \lesssim \|\langle x\rangle Q^p v\|_{L_x^1 L_t^2} \\
& \lesssim \| \langle x\rangle^{\frac{5}{2p}}Q\|_{L_x^p L_t^\infty}^p \|\langle x\rangle^{-\frac32}v\|_{L_x^\infty L_t^2} \\
& \lesssim \|Q\|_Z^p \|v\|_X,
\end{align*}
which is acceptable. 

The contribution of $F_2$ is estimated by
\begin{align*}
\biggl\|m\int_0^t e^{-i(t-s)H}P_cF_2\,ds\biggr\|_{L_x^\infty L_t^2} 
& \lesssim \|\langle x\rangle v^2 Q^{p-1}\|_{L_x^1 L_t^2} + \| \langle x\rangle v^p Q\|_{L_x^1 L_t^2} \\
& \lesssim \| \langle x\rangle^{-\frac32}v\|_{L_x^\infty L_t^2} \|v\|_{L_{t,x}^\infty} \| \langle x\rangle^{\frac{5}{2(p-1)}}Q\|_{L_x^{p-1}L_t^\infty}^{p-1}\\ &\quad + \| \langle x\rangle^{-\frac32} v\|_{L_x^\infty L_t^2} \|v\|_{L_{t,x}^\infty}^{p-1} \|\langle x\rangle^{\frac52}Q\|_{L_x^1 L_t^\infty} \\
& \lesssim \|v\|_X^2\|Q\|_Z^{p-1}+\|v\|_X^p\|Q\|_Z,
\end{align*}
which is acceptable (after an application of Young's inequality). 

Finally, we use Corollary~\ref{C:Str2} to estimate
\begin{align*}
\biggl\| m\int_0^t e^{-i(t-s)H}P_c F_3\,ds\biggr\|_{L_x^\infty L_t^2} & \lesssim \| |v|^p v\|_{L_t^1 H_x^{\frac12}} \\
& \lesssim \|v\|_{L_t^4 L_x^\infty}^4 \|v\|_{L_{t,x}^\infty}^{p-4}\|v\|_{L_t^\infty H_x^1} \\
& \lesssim \|v\|_X^{p+1},
\end{align*}
which is acceptable.  This completes the proof of Lemma~\ref{L:bs-reverse}.\end{proof}

Finally, using Lemmas~\ref{L:vc}, \ref{L:bs-data}, \ref{L:bs-str}, \ref{L:bs-H1}, and \ref{L:bs-reverse} we complete the proof of Proposition~\ref{P:bs-v}. 

\subsection{Completing the proof}\label{S:finish} In this section, we first use the estimates of the previous two sections in order to close a bootstrap estimate, which allows us to continue the decomposition of $u$ for all time, as well as to prove the desired properties for $z(t)$ and $v(t)$ and hence complete the proof of Theorem~\ref{T}.

We let $u(t)$ be the solution to \eqref{nls} with initial data $u_0$, where $\|u_0\|_{H^1}=\delta$ for some small $\delta>0$.  By local well-posedness and Lemma~\ref{L:orthogonal}, we can uniquely decompose
\begin{equation}\label{pf-decomp}
u(t)=Q[z(t)]+v(t),\qtq{with}\Im\langle v(t),D_jQ[z(t)]\rangle\equiv 0\qtq{for}j\in\{1,2\},
\end{equation}
at least on some time interval, with $|z(t)|+\|v(t)\|_{H^1} \lesssim \|u(t)\|_{H^1}\lesssim\delta$.  On such an interval, we can now collect the estimates from the previous section.  Collecting Lemma~\ref{L:bs-Q}, \eqref{z-infinity}, Lemma~\ref{L:bs-ODE}, and Proposition~\ref{P:bs-v}, we have the following:
\begin{align}
&\|z\|_{L_t^\infty} \lesssim\delta \implies \|Q\|_Z \lesssim \|z\|_{L_t^\infty}\qtq{and} \|DQ\|_W \lesssim 1, \label{BS1} \\
&\|z\|_{L_t^\infty} \leq |z(0)|+\|z\|_Y, \nonumber\\
&\|z\|_Y \lesssim \|DQ\|_W\bigl\{ \|v\|_X^2 \|Q\|_Z^{p-1}+\|v\|_X^{p+1}\bigr\}, \label{BS2}\\
&\|v\|_X \lesssim \|v(0)\|_{H^1}+\|z\|_Y\|DQ\|_W + \|v\|_X\|Q\|_Z^p + \|v\|_X^{p+1}.\label{BS3}
\end{align}
By a standard bootstrap argument (choosing $\delta$ small), it follows that the bounds
\[
\|u(t)\|_{H^1}\lesssim \delta,\quad \|v\|_X\lesssim\delta,\quad\|z\|_{L_t^\infty}\lesssim\delta,\qtq{and} \|z\|_Y\lesssim\delta^2,
\]
as well as the decomposition \eqref{pf-decomp}, persist for all time.

We turn to establishing the asymptotics $v(t)$ and $z(t)$.

First, we prove scattering in $H^1$ for $v(t)$.  We claim that it suffices to prove scattering for $P_c v(t)$.  Writing $v=P_c v+\langle\phi_0,v\rangle\phi_0$, the claim reduces to proving
\begin{equation}\label{scatter1}
\lim_{t\to\infty}\|\langle \phi_0,v(t)\rangle\phi_0\|_{H^1}=0. 
\end{equation}
\begin{proof}[Proof of \eqref{scatter1}]
Using the orthogonality conditions in \eqref{pf-decomp} and using Proposition~\ref{P:Q} to write $Q[z(t)]=z(t)\phi_0+h(z(t))$ (as in the proof of Lemma~\ref{L:vc}), we find
\[
\| \langle\phi_0,v(t)\rangle\phi_0\|_{H^1} \lesssim \|Dh(z(t))\|_{L_x^{\frac43}} \|v(t)\|_{L_x^4}.
\]
As
\[
\|Dh\|_{L_t^\infty L_x^{\frac43}} \lesssim \|z\|_{L_t^\infty}
\]
it suffices to prove that $\|v(t)\|_{L_x^4} \to 0$ as $t\to\infty$.  To see this, we firstly observe (by interpolation of $L_t^\infty L_x^2$ and $L_t^\infty L_x^4$) that $\|v(t)\|_{L_x^4}^4\in L_t^2$.  We will now show that $\partial_t \|v(t)\|_{L_x^4}^4$ is bounded, which implies the desired result.  Using the equation \eqref{v-eqn1} for $v$ and Lemma~\ref{L:equiv} (writing $Hv=HP_cv -q^2\phi_0\langle \phi_0,v\rangle$), we can firstly estimate
\begin{align*}
\partial_t \|v(t)\|_{L^4}^4 & \lesssim \|v\|_{L_{t,x}^\infty}^2 \|\partial_x v\|_{L_t^\infty L_x^2}^2 + \|v\|_{L_t^\infty L_x^3}^3\|v\|_{L_t^\infty L_x^2} \|\phi_0\|_{L_x^\infty}^2 \\
& \quad + \|v\|_{L_t^\infty L_x^6}^3\|F(Q+v)-F(Q)\|_{L_t^\infty L_x^2} \\
& \quad + \|v\|_{L_t^\infty L_x^6}^3 \|DQ\|_{L_t^\infty L_x^2} \|\dot z+iEz\|_{L_t^\infty}
\end{align*}
uniformly in $t$.  Using the bounds on $v$ and $Q[z]$, we see the proof boils down to controlling $\dot z+iEz$ in $L_t^\infty$.  For this, we go back to the ODE \eqref{ODE} and use the computations at the beginning of Lemma~\ref{L:bs-ODE} to bound 
\begin{align*}
\|\dot z+iEz\|_{L_t^\infty} & \lesssim \|(v^2 Q^{p-1}+v^{p+1})DQ\|_{L_t^\infty L_x^1} \\
& \lesssim \|DQ\|_{L_t^\infty L_x^2} \|v\|_{L_t^\infty L_x^4}^2  \bigl\{\|Q\|_{L_{t,x}^\infty}^{p-1}+\|v\|_{L_{t,x}^\infty}^{p-1}\bigr\}.
\end{align*}
This completes the proof of \eqref{scatter1}.
\end{proof}

It finally remains to prove scattering for $P_c v(t)$. For this we use the Duhamel formula \eqref{pcduh} to show that $\{e^{itH}P_cv(t)\}$ is Cauchy in $H^1$.  Indeed, using the estimates from \eqref{L:bs-str} and Lemma~\ref{L:bs-H1}, we can deduce
\[
\|e^{itH}P_cv(t) - e^{isH}P_c v(s)\|_{H^1} \lesssim \|z\|_Y \|DQ\|_W + \|v\|_X\|Q\|_Z^p+\|v\|_X^{p+1},
\]
where now the norms on the right-hand side are restricted to $(s,t)$ (and not all of the components of the $X$-norm are $L^\infty$ in time). Sending $s,t\to\infty$ yields the claim. 

Finally, we note that $\|\dot z+iEz\|_{L_t^1}\lesssim \delta^2$ yields the desired bounds and asymptotics for $z$.  This completes the proof of Theorem~\ref{T}. 

\end{document}